\renewcommand\frak{\mathfrak}
\DeclareFontFamily{U}{mathx}{}
\DeclareFontShape{U}{mathx}{m}{n}{<-> mathx10}{}
\DeclareSymbolFont{mathx}{U}{mathx}{m}{n}
\DeclareMathAccent{\widehat}{0}{mathx}{"70}
\DeclareMathAccent{\widecheck}{0}{mathx}{"71}
\newcommand{\M}{\mathrm{M}}
\DeclareMathOperator{\Tr}{\mathrm{Tr}}
\DeclareMathOperator{\No}{\mathrm{N}}
\DeclareMathOperator{\Coeff}{coeff}
\newenvironment{enumalph}
{\begin{enumerate}
}
{\end{enumerate}}
\def\myT{t} 
\theoremstyle{remark}
\newcommand\Z{{\mathbb Z}}
\newcommand\N{{\mathbb N}}
\newcommand\Zpos{\N}
\newcommand\Znn{\N_0}
\newcommand\Zbar{{\ol\Z}}
\newcommand\ol[1]{\overline{#1}}
\newcommand\Q{{\mathbb Q}}
\newcommand\Qbar{{\ol\Q}}
\newcommand\R{{\mathbb R}}
\newcommand\Rpos{\R_{>0}}
\newcommand\C{{\mathbb C}}
\newcommand\F{{\mathbb F}}
\newcommand\lset{\{\,}
\newcommand\rset{\,\}}
\newcommand\lra{\longrightarrow}
\newcommand\bs{\backslash}
\newcommand\inv{^{-1}}
\newcommand\tr{\operatorname{tr}}
\newcommand\ip[2]{\langle #1,#2\rangle} 
\newcommand\siX[1]{{\mathcal X}_{#1}} 
\newcommand\Xtwo{\siX2}
\newcommand\XtwoN{\siX2(N)}
\newcommand\XtwoNsemi{\XtwoN^{\rm semi}}
\newcommand\SLgp{\operatorname{SL}}
\newcommand\SLfour{\SLgp(4)}
\newcommand\SL[2]{\SLgp_{#1}(#2)}
\newcommand\SLtwo[1]{\SL2{#1}}
\newcommand\SLtwoZ{\SLtwo\Z}
\newcommand\SLtwoR{\SLtwo\R}
\newcommand\Spgp{\operatorname{Sp}}
\newcommand\Sp[2]{\operatorname{Sp}_{#1}(#2)}
\newcommand\SptwoZ{\Sp2\Z}
\newcommand\SptwoQ{\Sp2\Q}
\newcommand\SptwoR{\Sp2\R}
\newcommand\Gamzero{\Gamma_{\!0}}
\newcommand\GzN{\Gamzero(N)}
\newcommand\GSppos[2]{{\rm GSp}_{#1}^+(#2)}
\newcommand\Ptwoone{\operatorname{P}_{2,1}}
\newcommand\PtwooneQ{\Ptwoone(\Q)}
\newcommand\rmat[4]{\left(\begin{array}{rr}
{#1}&{#2}\\{#3}&{#4}\end{array}\right)}
\newcommand\cmat[4]{\left(\begin{array}{cc}
{#1}&{#2}\\{#3}&{#4}\end{array}\right)}
\newcommand\smallmat[4]{\left(\begin{smallmatrix}
{#1}&{#2}\\{#3}&{#4}\end{smallmatrix}\right)}
\newcommand\smallmatabcd{\smallmat abcd}
\newcommand\paramodulargroup[1]{{\rm K}(#1)}
\newcommand\KN{\paramodulargroup N}
\newcommand\pvar{{\cmat\tau zz\omega}} 
\newcommand\smallpindN{{\smallmat n{r/2}{r/2}{mN}}}
\newcommand\fcJ[2]{c\left(#1;#2\right)} 
\newcommand\UHP{{\mathcal H}} 
\newcommand\UHPtwo{{\UHP_2}}
\newcommand\wtoperator{|}
\newcommand\wtvar[2]{\wtoperator_{#2}#1}
\newcommand\wtk[1]{\wtvar{#1}k}
\newcommand\wtop[1]{\wtoperator{#1}}
\newcommand\fc[2]{a(#1;#2)} 
\newcommand\e{{\rm e}} 
\newcommand\MFsnoweight{{\mathcal M}} 
\newcommand\MFswtgp[2]{\MFsnoweight_{#1}(#2)}
\newcommand\MFs[1]{\MFswtgp k{#1}} 
\newcommand\MFskN{\MFswtgp k{\paramodulargroup N}}
\newcommand\MFsthreelev[1]{\MFswtgp 3{\paramodulargroup{#1}}}
\newcommand\CFsnoweight{{\mathcal S}} 
\newcommand\CFswtgp[2]{\CFsnoweight_{#1}(#2)}
\newcommand\CFs[1]{\CFswtgp k{#1}} 
\newcommand\CFskN{\CFswtgp k{\paramodulargroup N}}
\newcommand\CFsthreeN{\CFswtgp 3{\paramodulargroup N}}
\newcommand\CFsthreelev[1]{\CFswtgp 3{\paramodulargroup{#1}}}
\newcommand\GritSp{{\mathcal G}}
\newcommand\JFind{m} 
\newcommand\MkKN{\MFswtgp k\KN}
\newcommand\SkKN{\CFswtgp k\KN}
\newcommand\HeckeT{{\mathrm T}}
\newcommand\HT[1]{\HeckeT(#1)}
\newcommand\HTo[1]{\HeckeT_1(#1)}
\newcommand\HTzo[1]{\HeckeT_{0,1}(#1)}
\newcommand\HToz[1]{\HeckeT_{1,0}(#1)}
\newcommand\Tp{\HT p}
\newcommand\Ttwo{\HT2}
\newcommand\Tthree{\HT3}
\newcommand\Tfive{\HT5}
\newcommand\Topsq{\HTo{p^2}}
\newcommand\Tzop{\HTzo p}
\newcommand\Tozpsq{\HToz{p^2}}
\newcommand\evhoef[2]{\lambda_{#2}(#1)} 
\newcommand\id[1]{\langle#1\rangle} 
\newcommand\Jac[2]{{\rm J}_{#1,#2}}
\newcommand\Jackm{\Jac km}
\newcommand\Jcusp[2]{{\rm J}_{#1,#2}^{\rm cusp}}
\newcommand\JkmNcusp{\Jcusp k{mN}}
\newcommand\JkNcusp{\Jcusp kN}
\newcommand\Jwh[2]{{\rm J}_{#1,#2}^!}
\newcommand\JzeroNwh{\Jwh0N}
\newcommand\Grit{\opratorname Grit}
\def\Grit{\operatorname{Grit}}
\def\Borch{\operatorname{Borch}}
\def\TB{{\rm TB}}
\def\GU{\operatorname{GU}}
\newcommand\mmid{\|} 
\renewcommand{\Im}[1]{{\rm Im}(#1)}
\newcommand\BL{\Borch}
\newcommand\idealp{{\mathfrak p}}
\newcommand\ideala{{\mathfrak a}}
\newcommand\idealb{{\mathfrak b}}
\newcommand\idealc{{\mathfrak c}}
\newcommand\idealf{{\mathfrak f}}
\newcommand\idealuu{{\mathfrak u}}
\newcommand\OK{{\mathcal O}_K}
\newcommand\OL{{\mathcal O}_L}
\newcommand{\GSp}{\operatorname{GSp}}
\newcommand{\diag}{\operatorname{diag}}
\newcommand{\floor}{\operatorname{floor}}
\newcommand{\disc}{\operatorname{disc}}
\newcommand\mymat[4]{
{\left(
\begin{smallmatrix}#1&#2\\#3&#4\end{smallmatrix}
\right)}}
\newcommand{\up}{^{{\phantom{2}}^{\phantom{2}}}}
\newcommand{\down}{_{\phantom{\int}}}
\theoremstyle{plain}
\newtheorem{theorem}{Theorem}[section]
\newtheorem{lemma}[theorem]{Lemma}
\newtheorem{proposition}[theorem]{Proposition}
\newtheorem{corollary}[theorem]{Corollary}
\begin{document}

\date{\today}

\title[Weight 3 paramodular eigenvalues and congruences]
{Eigenvalues and congruences for the weight~3 paramodular nonlifts of
  levels~61, 73, and~79}

\author[C.~Poor]{Cris Poor}
\address{Department of Mathematics, Fordham University, Bronx, NY 10458 USA}
\email{poor@fordham.edu}

\author[J.~Shurman]{Jerry Shurman}
\address{Department of Mathematics, Reed College, Portland, OR 97202 USA}
\email{jerry@reed.edu}

\author[D.~Yuen]{David S.~Yuen}
\address{92-1507 Punawainui St., Kapolei, HI 96707 USA}
\email{yuen888@hawaii.edu}

\begin{abstract}
We use Borcherds products to give a new construction of the weight~$3$
paramodular nonlift eigenform~$f_N$ for levels~$N=61,73,79$.
We classify the congruences of~$f_N$ to Gritsenko lifts.
We provide techniques that compute eigenvalues to support
future modularity applications.
Our method does not compute Hecke eigenvalues from Fourier
coefficients but instead uses elliptic modular forms, specifically
the restrictions of Gritsenko lifts and their images under the slash
operator to modular curves.
\end{abstract}

\keywords{Paramodular cusp form, Hecke eigenvalues, congruence}
\subjclass[2020]{Primary: 11F46; secondary: 11F30,11F50}

\maketitle


\section{Introduction\label{sectionIntr}}

We compute Hecke eigenvalues of the nonlift newforms
$f_{N} \in \CFsthreeN$ for the prime levels $N=61,73,79$ to support
future modularity proofs for Calabi--Yau threefolds~with conductor~$N$.   
Vasily Golyshev and Duco von Straten \cite{GvS} have recently
discovered a Calabi--Yau threefold~with conductor~$N=79$.  
The anticipated modularity proof would proceed by showing the 
equivalence of the associated Galois representations, as in~\cite{bpptvy18}.
We also give a new simple construction of each~$f_{N}$ using Borcherds
products.  This construction allows us to prove Fourier coefficient
congruences between lifts and nonlifts.
This article is part of the Day~$61$ project begun in Mainz in January, 2019.    

The first appearance of the aforementioned $f_N$-eigenvalues occurred
in the work of Avner Ash, Paul E.\ Gunnells, and Mark
McConnell~\cite{MR2394820}, who were searching for an element of
the cohomology space $H^5(\Gamma_0(N);\C)$ that genuinely arises from
$\SLfour$.  Here $\Gamma_0(N) \subseteq \SL{4}{\Z}$ is the
subgroup of elements having bottom row congruent to $(0,0,0,*)\bmod N$.
Although they did not find such an element, for $N=61,73,79$ they did see
$2$ and~$3$-Euler factors that they believed originated from
degree~$2$ Siegel modular cusp forms.

Paramodular nonlift newforms in $\CFsthreeN$ were constructed as
holomorphic quotients of Gritsenko lifts in~\cite{py15} for $N=61,73,79$.
Eigenvalues were computed directly from the Fourier coefficients, and enough 
Fourier coefficients were computed to give the $2,3$,~and~$5$-Euler factors. 
The existence of $f_{61}$, for example, follows from the dimension
formula for prime levels due to Tomoyoshi Ibukiyama \cite{i07}, 
namely $\dim\CFsthreelev{61}=7$, and 
the dimension of lifts $\dim \Jcusp{3}{61}=6$ from \cite{ez85}.  
Actually, $61$ is the lowest level, prime or composite, for which 
$\CFsthreeN$ contains nonlifts; the first such levels are
$61, 69, 73, 76, 79, 82, 85, 87, 89$.  

Golyshev and Anton Mellit developed an experimental method, 
relying on the existence of a functional equation, to directly search for 
$L$-series.  In 2010, Mellit found the first $53$ Dirichlet coefficients 
of a degree~$4$ $L$-series with conductor~$61$ and matched the initial
coefficients with the Euler factors for~$L(s,f_{61}, \text{spin})$ in
the arXiv version of~\cite{py15}.  

Supported by increasingly broad dimension formulae, 
Ibukiyama proposed conjectures relating scalar~\cite{i84,i85} 
and vector~\cite{i07b,ik17} paramodular forms for~$\GSp(4)$ 
of weight~$k \ge 3$ to algebraic modular forms on the compact twist  
$\GU(2,B)$, where~$B$ is a definite quaternion algebra.  
A form of the conjecture in~\cite{i07b} has been proven by Pol van Hoften~\cite{MR4329279}.   
The conjecture of Ibukiyama and Hidetaka Kitayama in~\cite{ik17} 
has been proven by Mirko R\"{o}sner and Rainer Weissauer in~\cite{rosnerweissauer2021}, 
and broadened in~\cite{dummiganprt2021} to allow the discriminant of the 
quaternion algebra~$B$ to properly divide the level~$N$.   
The main result of Neil Dummigan, Ariel Pacetti, Gustavo Rama, and Gonzalo Tornaria
in~\cite{dummiganprt2021} is to give a precise correspondence, 
influenced by~\cite{i19}, between algebraic modular forms for~$\GU(2,B)$ 
and orthogonal modular forms for a carefully chosen quinary lattice.  
As a consequence,   the Hecke
eigenvalues of paramodular newforms and certain orthogonal modular forms 
agree for weight~$k \ge 3$ and levels~$N$ 
such that $p \mmid N$ for some prime~$p$.  
Fast methods for computing eigenvalues of
orthogonal modular forms for prime levels~$N$ were developed by a
collaboration of Jeffery Hein, Watson Ladd, and Tornaria
\cite{Hein16,Ladd18}.
Rama and Tornaria~\cite{ramatornaria20} extended these methods 
to orthogonal
modular forms with characters involving the spinor norm, 
and they conjectured a Hecke invariant isomorphism for prime~$N$  
between $S_3\left( K(N) \right)$ and a direct sum of spaces of 
orthogonal modular forms with trivial and nontrivial characters.  
This work was a motivation for~\cite{rosnerweissauer2021}.  
Hein~\cite{Hein16} computed Euler factors for $p<100$ and prime
levels~$N \le 197$.
The appendix of \cite{Ladd18} contains Euler factors for primes
$3\le p \le 31$ and prime levels~$N$ ranging from $61$ to~$359$.
In~\cite{ramatornaria20} the  $2$, $3$, and $5$-Euler factors are given
for squarefree levels~$N<1000$.
An even larger data set of eigenvalues of orthogonal modular forms for 
levels~$N$ meeting the condition of~\cite{dummiganprt2021} has been 
given by Eran Assaf, Ladd, Rama, Tornaria, and John Voight in~\cite{alrtv2023}.  
This includes the eigenvalues of~$T(p^j)$ for similitudes~$p^j<200$, $1 \le j \le 2$, 
for weight three paramodular newforms.

The only currently known way to associate a Galois representation
to an orthogonal modular form is to pass to a paramodular form
with the same eigenvalues.
Galois representations can be associated to elements of 
$H^5(\Gamma_0(N);\C)$ as in~\cite{MR4078966}, but computing 
eigenvalues using the sharbly complex is expensive.  
Thus the current path to proving the modularity of a Calabi--Yau
threefold of Hodge type~$(1,1,1,1)$ relies on associating Galois
representations to both the threefold and a paramodular newform, and
computing enough Hecke eigenvalues to apply the Faltings--Serre 
method for $\GSp(4)$.  Once modularity is proven, the most efficient
way to compute Euler factors will be by counting points over finite
fields or, perhaps, by the fourth order Picard--Fuchs differential equations associated to the
Calabi--Yau threefolds, see~\cite{cos21} for conjectures on this topic.    
In effect, this article computes eigenvalues
by a slow automorphic method in order to support a future proof that
these same eigenvalues can be computed by a fast arithmetic method.
For $N=61,73,79$ we give $p$-Euler factors for $p$ up to~$19$
and~$\Tp$-eigenvalues for prime~$p<200$, including the bad prime $p=N$.

In this article, Hecke eigenvalues of paramodular newforms are
computed by the technique of restriction to modular curves used
in~\cite{bpptvy18}.
This technique does not use Fourier coefficients, and it also avoids
the multivariable Fourier expansions of paramodular forms that were
used in~\cite{py15}.
The technique instead works with single variable $q$-expansions of
elliptic modular forms that are obtained by restriction of
$f_{N}\wtop\gamma$, for $\gamma\in\GSppos4\Q$, to a fixed
auxiliary modular curve.  The efficiency of this technique is
sensitive to the method by which the paramodular form is constructed.
Here $f_{N}$ is constructed as a quotient of Gritsenko lifts and a
new simple proof of this construction is presented using ideas
from~\cite{gpy15}.
The real virtue of the technique of restriction to modular curves,
however, is that it enables various speed-ups, a few of which were given in~\cite{bpptvy18}.    
This article presents,
for the first time, all known speed-ups to this technique for computing the
eigenvalues of the Hecke operators $\Tp,\Topsq$ for $p\nmid N$ and
$\Tp,\Tozpsq$ for $p\mmid N$, where $p$ is prime.  

Constructing the~$f_N$ as linear combinations of Gritsenko
lifts and Borcherds products allows the direct proof of optimal
congruences among Fourier coefficients.
These congruences of Fourier coefficients immediately imply
congruences of eigenvalues and thus of Euler polynomials.
We are thereby able to classify all the congruences of the
nonlift eigenforms~$f_N$ to Gritsenko lifts.
This method of proving congruences allows the congruence prime
to divide the discriminant of the polynomial of the Hecke field
generator, a condition that holds in the cases of extension degree $7$
and~$5$ in the table immediately below.
For each row of the table, $f_N$ is congruent to a Gritsenko
lift eigenform $g=g(a)\in\CFsthreeN$ where $a\in\Zbar$ is the
$\HeckeT(2)$-eigenvalue of~$g$; with $K=\Q(a)$, the Euler polynomials
of $f_N$ and~$g$ are congruent modulo the ideal~$\ideala$ of the
number ring~$\OK$, and $\ideala$ is the minimal such $\OK$-ideal.
Further, $g$ can be normalized so that the Fourier coefficients
of $f_N$ and~$g$ are congruent modulo~$\ideala$.
The table also shows the discriminant of the polynomial~$q(x)\in\Z[x]$
of~$a$.
In the last row of the table, $v$ is an algebraic integer constructed
from~$a$ as described in the proof of Theorem~\ref{cong79}.
$$
\begin{array}{|c|c|c|c|}
\hline
N & [K:\Q] & \ideala & \operatorname{disc}(q) \\
\hline\hline
61 & 6 & \langle43,a+7\rangle & 2^{14}\cdot3^6\cdot1892022169 \up \\  
\hline
73 & 1 & 3 & 1 \up \\
   & 7 & \langle3,a\rangle\langle13,a+6\rangle
     & 2^{24}\cdot3\cdot13\cdot19^2\cdot37\cdot101\cdot30391 \\
\hline
79 & 2 & \langle2,a+1\rangle & 17 \up \\
   & 5 & \langle2,v\rangle^3 & 2^{16}\cdot4787257 \\
\hline
\end{array}
$$

Using the speed-ups described in this article, we can compute
the following eigenvalues and Euler polynomials.
These are spin polynomials, and their formula is given in
the proof of Lemma~\ref{lemmathree} below.
There is considerable overlap between these results and results
posted at the companion web page~\cite{ramatornaria20webpage}
to~\cite{ramatornaria20}.

For $N=61$ and $f=f_{61}$ the nonlift eigenform,
the pairs $(p,\lambda_f(\Tp))$ for primes~$p<200$ are
\begin{align*}
&(2,-7), (3,-3), (5, 3), (7, -9), (11, -4), (13, -3), (17, 37),(19, -75),  \\
&(23,  10), (29, 212), (31, -6), (37, -88), (41, -3), (43,  547), (47, -147), \\
&(53, -108), (59, -45), (61, 146), (67, -632), (71, -650), (73, 859),
  (79, -978), \\
&(83, 931), (89, -571), (97, 453), \\
&(101,   830), (103, 1246), (107, 707), (109, -378), (113, -225), \\
&(127,  1607), (131, -1399), (137, -861), (139, 1938), (149, 157), \\
&(151,  2356), (157, -414), (163, -11), (167, -1852), (173, -2021),
  (179,  1444), \\
&(181,  442), (191, -366), (193, -2790), (197, -815), (199, -2753).
\end{align*}
The pairs $(p,\lambda_f(\Topsq))$ for primes~$p\le19$ are
$$
(2, 7), (3, -9), (5, -9), (7, -42), (11, 36), (13, -57), (17, 176),
(19, 32),
$$
and the Euler polynomials for~$p\le19$ are as follows.
$$
\begin{array}{|c||c|}
\hline
p\,                    & Q_p(f_{61};x)     \\
\hline\hline
2\up                                
& 1 + 7 x + 24 x^2 + 56 x^3 + 64 x^4   \\
\hline
3\up       
& 1 + 3 x + 3 x^2 + 81 x^3 + 729 x^4    \\  
\hline
5\up       
& 1 - 3 x + 85 x^2 - 375 x^3 + 15625 x^4                \\
\hline
7\up      
& 1 + 9 x + 56 x^2 + 3087 x^3 + 117649 x^4              \\
\hline
11\up      
& 1 + 4 x + 1738 x^2 + 5324 x^3 + 1771561 x^4                  \\
\hline
13\up        
& 1 + 3 x + 1469 x^2 + 6591 x^3 + 4826809 x^4                   \\
\hline
17\up        
& 1 - 37 x + 7922 x^2 - 181781 x^3 + 24137569 x^4                  \\
\hline
19\up        
& 1 + 75 x + 7486 x^2 + 514425 x^3 + 47045881 x^4                 \\
\hline
\end{array}
$$
For $p=61$ the Atkin--Lehner sign and the Euler polynomial are
$\epsilon_{61}=-1$ and
$$
Q_{61}(f_{61};x) = (1 - 61 x) (1 - 84 x + 226981 x^2).
$$
\smallskip

For $N=73$ and $f=f_{73}$,
the pairs $(p,\lambda_f(\Tp))$ for primes~$p<200$ are
\begin{align*}
&(2, -6), (3, -2), (5, 0), (7, 7), (11, -66), (13, 16), (17, 51),
  (19,16), \\
&(23, 153), (29, -18), (31, -185), (37, 220), (41, 60), (43, -458),
(47, 9), \\
&(53, 138), (59, 84), (61, -218), (67, 70), (71, 279), (73, -23), (79, 808), \\
&(83, -492), (89, -240), (97, 1042), \\
&(101,138), (103, -5), (107, -180), (109, 676), (113, -1086), \\
&(127, -104), (131, -2274), (137, 144), (139, -80), (149, 306), \\
&(151, -1721), (157, -320), (163, 1798), (167, 975), (173, 1422),
  (179, -2784), \\
&(181, -1604), (191,  2661), (193, 2017), (197, -96), (199, 2428).
\end{align*}
The pairs $(p,\lambda_f(\Topsq))$ for primes~$p\le19$ are
$$
(2, 6), (3, -9), (5, 0), (7, -110), (11,  162), (13, -284), (17, -90),
(19, -251),
$$
and the Euler polynomials for~$p\le19$ are as follows.
$$
\begin{array}{|c||c|}
\hline
p\,                    & Q_p(f_{73};x)     \\
\hline\hline
2\up                                
& 1 + 6 x + 22 x^2 + 48 x^3 + 64 x^4   \\
\hline
3\up       
& 1 + 2 x + 3 x^2 + 54 x^3 + 729 x^4    \\  
\hline
5\up       
& 1 + 130 x^2 + 15625 x^4                \\
\hline
7\up      
& 1 - 7 x - 420 x^2 - 2401 x^3 + 117649 x^4              \\
\hline
11\up      
& 1 + 66 x + 3124 x^2 + 87846 x^3 + 1771561 x^4                 \\
\hline
13\up        
& 1 - 16 x - 1482 x^2 - 35152 x^3 + 4826809 x^4                   \\
\hline
17\up        
& 1 - 51 x + 3400 x^2 - 250563 x^3 + 24137569 x^4              \\
\hline
19\up        
& 1 - 16 x + 2109 x^2 - 109744 x^3 + 47045881 x^4                 \\
\hline
\end{array}
$$
For $p=73$ the Atkin--Lehner sign and the Euler polynomial are
$\epsilon_{73}=-1$ and
$$
Q_{73}(f_{73};x) = (1 - 73 x) (1 + 97 x + 389017 x^2).
$$
\smallskip

For $N=79$ and $f=f_{79}$,
the pairs $(p,\lambda_f(\Tp))$ for primes~$p<200$ are
\begin{align*}
&(2, -5), (3, -5), (5, 3), (7, 15), (11, 26), (13, -15), (17, -60),
  (19, 32), \\
&(23, 50), (29, 24), (31, 142), (37, -500), (41, 240), (43, -320),
  (47, -105), \\
&(53, 630), (59,25), (61, 194), (67, 420), (71, 111), (73, 460),
  (79, 128), \\
&(83, -90), (89, -1261), (97, 1895), \\
&(101, -1239), (103, -2105), (107, -295), (109, -608), (113, 90), \\
&(127, 2565), (131, 578), (137, -50), (139, -849), (149, -904), \\
&(151, -354), (157, 1980), (163, 840), (167, 2650), (173, -3570),
  (179, 3340), \\
&(181, -708), (191, -695), (193,1000), (197, -40), (199, -141).
\end{align*}
These are consistent with the table of $a_p$ values for the
Calabi--Yau threefold ${\mathbf Y}_{79}$ in~\cite{GvS}, which goes up
to~$p=103$.  The pairs $(p,\lambda_f(\Topsq))$ for primes~$p\le19$ are
$$
(2, 2), (3, 4), (5, -10), (7, -24), (11, 0), (13, -158), (17, -156), (19, -712)         
$$
and the Euler polynomials for~$p\le19$ are as follows.
$$
\begin{array}{|c||c|}
\hline
p\,                    & Q_p(f_{79};x)     \\
\hline\hline
2\up                                
& 1 + 5 x + 14 x^2 + 40 x^3 + 64 x^4   \\
\hline
3\up       
& 1 + 5 x + 42 x^2 + 135 x^3 + 729 x^4    \\  
\hline
5\up       
& 1 - 3 x + 80 x^2 - 375 x^3 + 15625 x^4               \\
\hline
7\up      
& 1 - 15 x + 182 x^2 - 5145 x^3 + 117649 x^4              \\
\hline
11\up      
& 1 - 26 x + 1342 x^2 - 34606 x^3 + 1771561 x^4                 \\
\hline
13\up        
& 1 + 15 x +156 x^2 + 32955 x^3 + 4826809 x^4                  \\           
\hline
17\up        
& 1 + 60 x + 2278 x^2 + 294780 x^3 + 24137569 x^4                  \\
\hline
19\up        
& 1 - 32 x - 6650 x^2 - 219488 x^3 + 47045881 x^4                  \\      
\hline
\end{array}
$$
For $p=79$ the Atkin--Lehner sign and the Euler polynomial are
$\epsilon_{79}=-1$ and
$$
Q_{79}(f_{79};x) = (1 - 79 x) (1 - 48 x + 493039 x^2).
$$

\noindent{\bf Acknowledgments.\/}  
We are grateful to Neil Dummigan and Dave Roberts for helpful comments.  
This project was completed during a SQuaRE at the American 
Institute for Mathematics. The authors thank AIM for providing a supportive and 
mathematically rich environment.

\section{Background\label{sectionnotation}}

The set of positive integers is denoted~$\Zpos$,
the set of nonnegative integers~$\Znn$.
The integer ring of an algebraic number field~$K$ is denoted~$\OK$.
The $\OK$-ideal generated by any set~$S\subseteq\OK$ is denoted~$\id S$,
but we usually write $p\OK$ for~$\id p$ when $p$ is a rational prime.
The ideal norm and the Galois norm from $K$
to~$\Q$ are both denoted~$\No$, so that $\No(\id a)=|\No(a)|$ for all~$a\in\OK$.

\newpage    
\subsection{Paramodular forms\label{sectionPara}}

\subsubsection{Definitions, Fourier series representation\label{sectionParaDefs}}

The degree~$2$ symplectic group $\Spgp(2)$ of $4\times4$ matrices is
defined by the condition $g'Jg=J$, where the prime denotes matrix
transpose and $J$ is the skew form $\smallmat0{-1}1{\phantom{-}0}$
with each block $2\times2$.
The Klingen parabolic subgroup of~$\Spgp(2)$ is
$$
\Ptwoone=\lset
\left(\begin{array}{cc|cc}
* & 0 & * & * \\ * & * & * & * \\
\hline
* & 0 & * & * \\ 0 & 0 & 0 & *
\end{array}\right)\rset,
$$
with either line of three zeros forcing the remaining two
because the matrices are symplectic.
For any positive integer~$N$, the paramodular group $\KN$ of degree~$2$
and level~$N$ is the group of rational symplectic matrices that
stabilize the column vector lattice $\Z\oplus\Z\oplus\Z\oplus N\Z$.  In
coordinates,
$$
\KN=\lset
\left(\begin{array}{cc|cc}
* & *N & * & * \\ * & * & * & */N \\
\hline
* & *N & * & * \\ *N & *N & *N & *
\end{array}\right)\in\SptwoQ:\text{all $*$ entries integral}
\rset.
$$

Let $\UHPtwo$ denote the Siegel upper half space of $2\times2$
symmetric complex matrices that have positive definite imaginary part,
generalizing the complex upper half plane~$\UHP$.
Elements of this space are written
$$
\Omega=\pvar\in\UHPtwo,
$$
with $\tau,\omega\in\UHP$, $z\in\C$, and $\Im\Omega>0$.
Also, letting $\e(w)=e^{2\pi iw}$ for~$w\in\C$, our standard notation
throughout is
$$
q=\e(\tau),\quad\zeta=\e(z),\quad\xi=\e(\omega).
$$
The real symplectic group $\SptwoR$ acts on~$\UHPtwo$ as fractional
linear transformations, $g(\Omega)=(a\Omega+b)(c\Omega+d)\inv$ for
$g=\smallmatabcd$, and the automorphy factor is
$j(g,\Omega)=\det(c\Omega+d)$.  Fix an integer~$k$.
Any function $f:\UHPtwo\lra\C$ and any real symplec\-tic matrix
$g\in\SptwoR$ combine to form another such function through the
weight~$k$ slash operator,
$$
(f\wtk g)(\Omega)=j(g,\Omega)^{-k}f(g(\Omega)).
$$
When $k$ is well established, we freely write $f\wtop g$ rather than
$f\wtk g$.
A paramodular form of weight~$k$ and level~$N$ is a holomorphic
function $f:\UHPtwo\lra\C$ that is $\wtk{\KN}$-invariant.
The space of weight~$k$, level~$N$ paramodular
forms is denoted~$\MkKN$.

A paramodular form of level $N$ has a Fourier expansion
$$
f(\Omega)=\sum_{t\in\XtwoNsemi}\fc tf\,\e(\ip t\Omega)
$$
with all $\fc tf\in\C$, where the index set is
$$
\XtwoNsemi=\lset\smallpindN:n,m\in\Znn,r\in\Z,4nmN-r^2\ge0\rset
$$
and $\ip t\Omega=\tr(t\Omega)$.
For any subring~$R$ of~$\C$ we let $\MkKN(R)$ denote the $R$-module of
$\MkKN$-elements whose Fourier coefficients all lie in~$R$.
This notation also applies to all subspaces of~$\MkKN$ to be
introduced below, e.g., $\SkKN^+(\Z)$ is the $\Z$-module of
paramodular cusp forms that are Fricke eigenfunctions having
eigenvalue~$1$ and all Fourier coefficients in~$\Z$.
An element of $\MkKN(R)$ is said to have {\em unit content\/} if the
ideal generated by its Fourier coefficients is all of~$R$.

The Siegel $\Phi$ operator takes any holomorphic function that has a
Fourier series of the form
$f(\Omega)=\sum_t\fc tf\,\e(\ip t\Omega)$, summing over rational
positive semidefinite $2\times2$ matrices~$t$, to the function
$(\Phi f)(\tau)=\lim_{\lambda\to+\infty}f(\smallmat\tau00{i\lambda})$.
A paramodular form $f$ in~$\MkKN$ is called a cusp form if
$\Phi(f\wtk g)=0$ for all~$g\in\SptwoQ$.
This is a finite condition because 
it only needs to be checked for one representative~$g$ of each double
coset in Helmut Reefschl\"ager's decomposition (\cite{reefschlager73},
and see Theorem~1.2 of~\cite{py13}),
in which a superscript asterisk denotes matrix inverse-transpose,
$$
\SptwoQ=\bigsqcup_{m\in\Zpos:m\mid N}\KN u(\alpha_m)\PtwooneQ,
\qquad\alpha_m=\smallmat1m01,
\ u(\alpha)=\smallmat\alpha00{\alpha^*}.
$$
A paramodular form is a cusp form if and only if its Fourier expansion
is supported on $\XtwoN$, defined by the strict inequality
$4nmN-r^2>0$; this characterization of cusp forms does not hold in
general for groups commensurable with~$\SptwoZ$, but it does hold
for~$\KN$ because the representatives in Reefschl\"ager's decomposition 
have block diagonal form.
The space of paramodular cusp forms is denoted~$\SkKN$.

\subsubsection{Symmetric and antisymmetric forms\label{sectionSymm}}

The elliptic Fricke involution
$$
\alpha_N=\frac1{\sqrt N}\rmat0{-1}N{\phantom{-}0}:
\tau\longmapsto-\frac1{N\tau}
$$
normalizes the level~$N$ Hecke subgroup $\GzN$ of~$\SLtwoZ$, and it
squares to~$-1$ as a matrix, hence to the identity as a transformation.
The corresponding paramodular Fricke involution is
$$
\mu_N=\cmat{\alpha_N^*}00{\alpha_N}:
\pvar\longmapsto\cmat{\omega N}{-z}{-z}{\tau/N}.
$$
The paramodular Fricke involution normalizes the paramodular group $\KN$,
and it squares to the identity as a transformation.
The space $\CFs\KN$ decomposes as the direct sum of the Fricke eigenspaces
for the two eigenvalues~$\pm1$, $\SkKN=\SkKN^+\oplus\SkKN^-$.
We let $\epsilon$ denote either eigenvalue.
A paramodular Fricke eigenform is called {\em symmetric\/} if
$(-1)^k\epsilon=+1$, and {\em antisymmetric\/} if $(-1)^k\epsilon=-1$.

\subsubsection{Atkin--Lehner involutions\label{sectionAL}}

Let~$N$ be a positive integer, and let~$c$ be a positive divisor
of~$N$ such that $\gcd(c,N/c)=1$.
In this article $N$ is always squarefree, so $c$ can be any positive
divisor of~$N$.
For any integers $\alpha,\beta,\gamma,\delta$ such that
$\alpha\delta c-\beta\gamma N/c=1$,
an elliptic $c$-Atkin--Lehner matrix is
$$
\alpha_c=\frac1{\sqrt c}\cmat{\alpha c}\beta{\gamma N}{\delta c}.
$$
Especially, for $c=1$ we may take $\alpha,\delta=1$ and
$\beta,\gamma=0$ to get the identity matrix, and for $c=N$ we may take
$\alpha,\delta=0$ and $\beta,\gamma=\mp1$ to get the Fricke involution
matrix $\alpha_N=\frac1{\sqrt N}\smallmat0{-1}N{\phantom{-}0}$.
By quick calculations, the inverse of any~$\alpha_c$ is
another~$\tilde\alpha_c$, any product $\tilde\alpha_c\alpha_c$ lies in
$\Gamma_0(N)$ and so the set of all~$\tilde\alpha_c$ lies in the
coset $\Gamma_0(N)\alpha_c$, and this coset also lies in the set of
all~$\tilde\alpha_c$, making them equal.  Consequently, $\alpha_c$
squares into~$\Gamma_0(N)$ and normalizes~$\Gamma_0(N)$.  
A paramodular $c$-Atkin--Lehner matrix is
$$
\mu_c=u\left(\alpha_c^*\right)= \cmat{\alpha_c^*}00{\alpha_c}.
$$
The inverse of any~$\mu_c$ is another $\tilde\mu_c$, and any product
$\tilde\mu_c\mu_c$ lies in~$\KN$, so that the set of all~$\tilde\mu_c$
lies in the coset $\KN\mu_c$ and they all give the same action on
paramodular forms, although now the containment is proper.
Again $\mu_c$ squares into~$\KN$, and a blockwise check shows that
$\mu_c$ normalizes~$\KN$.
For $c=1$ we take $\mu_1=1_4$.
For $c=N$, the paramodular Fricke involution is
$\mu_N$ from the previous paragraph.

\subsection{Hecke operators\label{sectionHeckeOps}}

The real symplectic group lies in the larger group $\GSppos2\R$
defined by the condition $g'Jg=m(g)J$ with $m(g)\in\Rpos$.
The weight~$k$ slash operator extends to
$(f\wtk g)(\Omega)=m(g)^{2k-3}j(g,\Omega)^{-k}f(g(\Omega))$
for $g\in\GSppos2\R$.
This is the arithmetic normalization of the slash operator, as
compared to the analytic normalization that has $m(g)^k$ instead.
Three Hecke operators figure in this article, defined in the usual way
by double cosets.  In particular, let $\HT{a,b,c,d}$ abbreviate the
double coset $\KN\diag(a,b,c,d)\KN$.
\begin{itemize}
\item $\Tp=\HT{1,1,p,p}$.  Because the double coset here is also
  $\HT{p,p,1,1}$, used to define the operator $\Tzop$
  from~\cite{schmidt18}, the two operators are equal and we make no
  reference to the notation $\Tzop$ in this article.
  In general, $\HT{a,b,c,d}$, $\HT{c,b,a,d}$, $\HT{a,d,c,b}$,
  $\HT{c,d,a,b}$ are all equal, i.e., we may exchange the first and
  third entries, or the second and fourth, or both pairs.
\item $\Topsq=\HT{1,p,p^2,p}$.  This is a standard operator, used
  in~\cite{bpptvy18}.
\item $\Tozpsq=\HT{p,p^2,p,1}$.  This operator is denoted $\HToz p$
  in~\cite{schmidt18}.  We write it $\Tozpsq$ to indicate its
  multiplier.
  The operators $\Topsq$ and $\Tozpsq$ are conjugate under the Fricke
  operator, $\mu_N\inv\Topsq\mu_N=\Tozpsq$.  In general,
  $\mu_N\inv\HT{a,b,c,d}\mu_N=\HT{b,a,d,c}$, and then the right side
  has three other names as explained just above.  
\end{itemize}
In this article we use $\Topsq$ when $p \nmid N$ and $\Tozpsq$ when $p \mmid N$.  

\subsubsection{Fourier--Jacobi expansion\label{sectionFour}}

The Fourier--Jacobi expansion of a para\-mod\-u\-lar cusp form
$f\in\SkKN$ is
$$
f(\Omega)=\sum_{m\ge1}\phi_m(f)(\tau,z)\xi^{mN},
\quad \Omega=\pvar,
$$
with Fourier--Jacobi coefficients
$$
\phi_m(f)(\tau,z)=\sum_{t=\smallpindN\in\XtwoN}\fc tfq^n\zeta^r.
$$
The Fourier--Jacobi coefficients are also written
$$
\phi_m(f)(\tau,z)=\sum_{n,r:4nmN-r^2>0}\fcJ{n,r}{\phi_m}q^n\zeta^r.
$$
Each Fourier--Jacobi coefficient $\phi_m(f)$ lies in the space
$\JkmNcusp$ of weight~$k$, index~$mN$ Jacobi cusp forms, whose
dimension is known.  Jacobi forms will briefly be reviewed next.

\subsection{Jacobi forms\label{sectionJaco}}

\newcommand\JFtoSMF[2]{E_{#2}{#1}}
\newcommand\JFtoSMFm[1]{\JFtoSMF{#1}m}

For the theory of Jacobi forms, see \cite{ez85,gn98,sz89}.
Let $k$ be an integer and let $\JFind$ be a nonnegative integer.  The
complex vector spaces of weight~$k$, index~$\JFind$ Jacobi forms,
Jacobi cusp forms, and weakly holomorphic Jacobi forms consist of
holomorphic functions $g:\UHP\times\C\lra\C$ that have Fourier series
representations
$$
g(\tau,z)=\sum_{n,r}\fcJ{n,r}gq^n\zeta^r
$$
with all $\fcJ{n,r}g\in\C$,
and that satisfy transformation laws and constraints on the support.
With the usual notation $\gamma(\tau)=(a\tau+b)/(c\tau+d)$
and $j(\gamma,\tau)=c\tau+d$ for $\gamma=\smallmatabcd\in\SLtwoZ$ and
$\tau\in\UHP$, the transformation laws are
\begin{itemize}
\item $g(\gamma(\tau),z/j(\gamma,\tau))
=j(\gamma,z)^k\e(\JFind cz^2/j(\gamma,\tau))g(\tau,z)$ for all
$\gamma\in\SLtwoZ$,
\item $g(\tau,z+\lambda \tau+\mu)
=\e(-\JFind \lambda^2\tau-2\JFind \lambda z)g(\tau,z)$ for all
$\lambda,\mu\in\Z$.
\end{itemize}
To describe the constraints on the support,
associate to any integer pair $(n,r)$ the discriminant
$$
D=D(n,r)=4n\JFind-r^2.
$$
The principal part of~$g$ is $\sum_{n<0}g_n(\zeta)q^n$ where
$g_n(\zeta)=\sum_r\fcJ{n,r}g\zeta^r$, and the singular part is
$\sum_{D(n,r)\le0}\fcJ{n,r}gq^n\zeta^r$.
\begin{itemize}
\item For the space $\Jac k\JFind$ of Jacobi forms, if $\JFind>0$ then
  the sum is taken over integers $n$ and~$r$ such that $D\ge0$, so
  that in particular $n\ge0$, and if $\JFind=0$ then the sum is taken
  over $n\in\Znn$ and $r=0$, and we have elliptic modular forms.
\item For the space $\Jcusp k\JFind$ of Jacobi cusp forms, if $\JFind>0$ then
  the sum is taken over integers $n$ and~$r$ such that $D>0$, so
  that in particular $n>0$, and if $\JFind=0$ then the sum is taken
  over $n\in\Zpos$ and $r=0$, and we have elliptic cusp forms.
\item For the space $\Jwh k\JFind$ of weakly holomorphic Jacobi forms
  the sum is taken over integers $n\gg-\infty$ and~$r$.
  For positive index~$\JFind$, the conditions $n\gg-\infty$ and
  $D\gg-\infty$ are equivalent.
\end{itemize}
When all the Fourier coefficients lie in a ring we also append the
ring to the notation; for example, $\Jwh 0\JFind(\Z)$ denotes the
$\Z$-module of weight~$0$, index~$\JFind$ weakly holomorphic forms
with integral Fourier coefficients.

\subsection{Theta blocks\label{sectionThet}}

The theory of theta blocks is due to Gritsenko, Skoruppa, and Zagier
\cite{gsz}.
Recall the Dedekind eta function $\eta:\UHP\lra\C$
and the odd Jacobi theta function
$\vartheta:\UHP\times\C\lra\C$,
\begin{align*}
\eta(\tau)&=q^{1/24}\prod_{n\ge1}(1-q^n),\\
\vartheta(\tau,z)
&=\sum_{n\in\Z}(-1)^nq^{(n+1/2)^2/2}\zeta^{n+1/2}\\
&=q^{1/8}(\zeta^{1/2}-\zeta^{-1/2})
\prod_{n\ge1}(1-q^n\zeta)(1-q^n\zeta\inv)(1-q^n).
\end{align*}
For any~$d\in\Zpos$ define $\vartheta_d$
to be $\vartheta_d(\tau,z)=\vartheta(\tau,dz)$.
Given $k,\ell,d_1,\dotsc,d_\ell\in\Zpos$, the resulting theta block is
defined to be
$$
\TB_k[d_1,\dotsc,d_{\ell}]=\eta^{2k-\ell}\prod_{j=1}^{\ell}\vartheta_{d_j}.
$$
We will use the following result from~\cite{gsz}.

\begin{theorem}[Gritsenko, Skoruppa, Zagier]\label{GSZthetablockthm}
Let $k,m,\ell,d_1,\dotsc,d_\ell\in\Zpos$.
Let $\bar B_2(x)=B_2(x-\lfloor x\rfloor)$ where $B_2(x)=x^2-x+1/6$ is
the second Bernoulli polynomial.
Then $\TB_k[d_1,\dotsc,d_\ell]\in\Jcusp{k}{m}$ if and only if 
$12\mid k+\ell$, $2m = \sum_{j=1}^{\ell}d_j^2$, and 
$\frac{k}{12}+\frac{1}{2}\sum_{j=1}^{\ell} {\bar B}_2(d_j x)>0$
for $0 \le x \le 1$; the positivity needs to be checked only for
$x\in[0,1/2]\cap\frac1{2m}\Z$.
\end{theorem}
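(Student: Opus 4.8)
The plan is to reduce the characterization of when a theta block $\TB_k[d_1,\dots,d_\ell]$ is a Jacobi cusp form to three separate checks: the correct weight and index (so that the product transforms the right way and the series is a genuine Jacobi form of the stated type), a positivity condition on the $q$-order of the Fourier coefficients (which controls holomorphy at the cusp $\tau\to i\infty$ as well as cuspidality), and the explicit translation of that positivity into Bernoulli polynomials.  First I would record the weight and index of $\eta^{2k-\ell}\prod_j\vartheta_{d_j}$: since $\eta$ has weight $1/2$ and $\vartheta$ has weight $1/2$, the product has weight $(2k-\ell)/2+\ell/2=k$; and since $\vartheta_d=\vartheta(\tau,dz)$ contributes index $d^2/2$ while $\eta$ has index $0$, the product has index $\tfrac12\sum_j d_j^2$, so being index $m$ forces $2m=\sum_j d_j^2$.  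The multiplier system of $\eta^{2k-\ell}$ on $\SLtwoZ$ is the $(2k-\ell)$-th power of the eta multiplier, and $\prod_j\vartheta_{d_j}$ contributes the $\ell$-th power of the theta multiplier; these cancel to give a trivial multiplier system exactly when $24\mid (2k-\ell)\cdot 1 + \ell\cdot 3$, i.e. when $24\mid 2k+2\ell$, i.e. $12\mid k+\ell$.  (The elliptic transformation law in $z$ is automatic from that of $\vartheta_d$ once the index matches.)

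Next I would analyze the support.  Each factor has a known leading $q$-power: $\eta$ starts at $q^{1/24}$, and $\vartheta_d$ starts at $q^{1/8}$ (its $n=0$ term, coming from $n=0,-1$ in the theta sum, is $q^{1/8}(\zeta^{d/2}-\zeta^{-d/2})$).  More precisely, $\vartheta_d(\tau,z)=\sum_{\nu\in\Z}(-1)^\nu q^{d^2(\nu+1/2)^2/(2d^2)}\zeta^{d(\nu+1/2)}$ — one should track, for each fixed exponent of $\zeta$, the minimal power of $q$ occurring in the full product.  Writing $\TB_k=\sum_{n,r}c(n,r)q^n\zeta^r$ and using the product formula for $\vartheta$, the minimal $n$ with $c(n,r)\ne0$ for given $r$ is governed by a piecewise-quadratic function of $r$; the cleanest route is to compute the $q$-order of the function $\zeta\mapsto$ (the coefficient series) after the substitution $\zeta=e^{2\pi i x}$, which by the hyperbolic expansion of the infinite products is $\tfrac{2k-\ell}{24}+\tfrac12\sum_{j=1}^\ell \bar B_2(d_j x) + (\text{an affine term in }x)$ up to a universal normalization.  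The theta block lies in $\Jcusp km$ precisely when this $q$-order is $>0$ for all $x$, which after absorbing the index-matching condition $2m=\sum d_j^2$ into the affine term is exactly the stated inequality $\tfrac{k}{12}+\tfrac12\sum_j\bar B_2(d_j x)>0$; holomorphy at the cusp is the $\ge 0$ version, and the \emph{strict} inequality is what upgrades "holomorphic Jacobi form" to "Jacobi cusp form" because $D(n,r)>0$ is equivalent to the relevant $q$-order being positive.  Finally, since each $\bar B_2(d_j x)$ is periodic with period $1/d_j$ and even about $x=0$ and about $x=1/(2d_j)$, the sum is periodic with period dividing $1$ and symmetric about $1/2$, and it is piecewise quadratic with breakpoints in $\tfrac1{2d_j}\Z$; a downward parabola on each piece attains its minimum at an endpoint, so it suffices to test $x\in[0,1/2]\cap\bigcup_j\tfrac1{2d_j}\Z\subseteq[0,1/2]\cap\tfrac1{2m}\Z$ (using $d_j\mid \sum d_i^2=2m$... more carefully, $d_j^2\le 2m$ so $\tfrac1{2d_j}\Z\subseteq \tfrac1{2m}d_j\Z$; the precise inclusion of breakpoint sets into $\tfrac1{2m}\Z$ is the combinatorial point to nail down).

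The main obstacle, and the step where care is genuinely needed, is the exact bookkeeping that turns the $q$-order of the theta block into $\tfrac{k}{12}+\tfrac12\sum_j\bar B_2(d_j x)$: one must correctly combine the $q^{1/24}$ and $q^{1/8}$ prefactors, the contributions of the three infinite products $(1-q^n)$, $(1-q^n\zeta^{d_j})$, $(1-q^n\zeta^{-d_j})$ to the $x$-dependent order via the identity $\sum_{n\ge1}\big(\{nx\}-\tfrac12\big)$-type sums $=-\tfrac12\bar B_2(x)$-type evaluations (this is the standard Euler–Maclaurin / Hurwitz-zeta computation behind the sawtooth-to-Bernoulli formula), and then check that the linear-in-$x$ pieces cancel exactly when $2m=\sum_j d_j^2$, leaving a function of $x$ alone.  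Once that identity is in hand, holomorphy, cuspidality, and the reduction to finitely many test points are all short.  I would assume Theorem~\ref{GSZthetablockthm} is being quoted from \cite{gsz} and so would present this as a sketch of why the stated conditions are the right ones rather than a from-scratch derivation; the transformation-law half is immediate from the multiplier-system computation above, and the support half is the Bernoulli-polynomial computation just outlined.
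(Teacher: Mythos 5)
First, note that the paper itself gives no proof of Theorem~\ref{GSZthetablockthm}: it is quoted verbatim from~\cite{gsz}, so there is no internal argument to compare yours against, and a sketch of the standard argument is the right register. Your weight, index, and multiplier bookkeeping is fine ($\eta$ and $\vartheta$ each of weight $1/2$, $\vartheta_{d}$ of index $d^2/2$, and triviality of the multiplier $v_\eta^{2k-\ell}\cdot v_\eta^{3\ell}$ exactly when $24\mid 2k+2\ell$), and you correctly identify that the substantive work is converting the $q$-order of the restriction to $z=x\tau$ (with the $m x^2$ normalization) into $\frac k{12}+\frac12\sum_j\bar B_2(d_jx)$, which you leave as a sketch; that is acceptable for a quoted result.

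The genuine gap is in your last step, the reduction to the finite test set $[0,1/2]\cap\frac1{2m}\Z$. The pieces of $\frac k{12}+\frac12\sum_j\bar B_2(d_jx)$ are \emph{upward} parabolas (each piece has leading coefficient $\frac12\sum_j d_j^2=m>0$), not downward ones, so the minimum on a piece is attained at an interior vertex, not at an endpoint; and the breakpoints lie in $\bigcup_j\frac1{d_j}\Z$, which is \emph{not} in general contained in $\frac1{2m}\Z$ (your proposed inclusion $\frac1{2d_j}\Z\subseteq\frac{d_j}{2m}\Z$ fails already unless $d_j^2\mid m$; e.g.\ $d=[2,3,3]$, $m=11$, breakpoint $x=1/3\notin\frac1{22}\Z$). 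The correct mechanism is the Fourier-support/lower-envelope description of the order function: for irrational $x$ (hence, by continuity, for all $x$) one has $\frac k{12}+\frac12\sum_j\bar B_2(d_jx)=\min\{\,n-rx+mx^2:\fcJ{n,r}{\TB}\ne0\,\}$, and each quadratic $Q_{n,r}(x)=n-rx+mx^2$ has its vertex at $x=r/(2m)\in\frac1{2m}\Z$ with minimal value $(4nm-r^2)/(4m)$. Hence if the order function is $\le0$ anywhere, the piece through that point coincides with some $Q_{n,r}$ whose minimum is $\le0$, and the order function is $\le0$ at the lattice point $r/(2m)$; combined with periodicity and the symmetry $x\mapsto 1-x$, positivity at the points of $[0,1/2]\cap\frac1{2m}\Z$ suffices. (One also needs $\sum_j d_j\equiv\sum_j d_j^2=2m\bmod 2$ if one argues via vertices of the Bernoulli pieces directly.) This same support description, $4nm-r^2>0$ on the support being equivalent to strict positivity of the order function, is what cleanly separates ``Jacobi form'' ($\ge0$) from ``cusp form'' ($>0$), which you assert but do not derive. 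So the proposal's first two thirds follow the standard route, but the finite-check argument as written would fail and needs to be replaced by the envelope/vertex argument above.
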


\subsection{Gritsenko lifts\label{sectionGlift}}

The Gritsenko lift, or additive lift, \cite{MR1336601} is an injection
$$
\Grit: \JkNcusp\lra\CFskN^{\epsilon}
\quad\text{where $\epsilon = (-1)^k$}.
$$
Its definition uses the Eichler--Zagier \cite{ez85} index raising operator
$V_{\ell}:\Jackm\lra\Jac{k}{m\ell}$,
extended to weakly holomorphic Jacobi forms \cite{gn98}.
The Gritsenko lift of~$\phi\in\JkNcusp$ is
$$
\Grit(\phi)\smallmat{\tau}{z}{z}{\omega} = \sum_{m=1}^{\infty} \left(
\phi \wtop{V_m} \right)(\tau,z)\e(m\omega).
$$

\subsection{Borcherds products\label{sectionBorc}}

The Borcherds product theorem, quoted here from \cite{MR3713095},
is a special case of Theorem~3.3 of \cite{gpy15}, which
in turn is quoted from \cite{gn98,grit12} and relies on the work
of Richard Borcherds.
In the theorem, $\sigma_0(m)$ denotes the number of positive divisors
of the positive integer~$m$.

\begin{theorem}\label{BPthm}
Let $N$ be a positive integer.  Let $\psi\in\JzeroNwh$ be a weakly
holomorphic weight~$0$, index~$N$ Jacobi form, having Fourier expansion
$$
\psi(\tau,z)=\sum_{\substack{n,r\in\Z\\n\gg-\infty}}c(n,r)q^n\zeta^r
\quad\text{where }q=\e(\tau),\ \zeta=\e(\zeta).
$$
Define
\begin{alignat*}2
A&=\frac1{24}c(0,0)+\frac1{12}\sum_{r\ge1}c(0,r),
 &\quad 
B\phantom{_0}&=\frac12\sum_{r\ge1}r\,c(0,r),\\
C&=\frac12\sum_{r\ge1}r^2c(0,r),
 &\quad
D_0&=\sum_{n\le-1}\sigma_0(|n|)c(n,0).
\end{alignat*}
Suppose that the following conditions hold:
\begin{enumerate}
\item $c(n,r)\in\Z$ for all integer pairs $(n,r)$ such that $4nN-r^2\le0$,
\item $A\in\Z$,
\item $\sum_{i\ge1}c(i^2nm,ir)\ge0$ for all primitive integer
  triples $(n,m,r)$ such that $4nmN-r^2<0$ and $m\ge0$.
\end{enumerate}
Then for weight $k=\frac12c(0,0)$ and Fricke eigenvalue
$\epsilon=(-1)^{k+D_0}$, the Borcherds product $\BL(\psi)$ lies
in~$\MFs\KN^\epsilon$.
For sufficiently large $\lambda$, for $\Omega=\smallmat\tau
zz\omega\in\UHPtwo$ and $\xi=\e(\omega)$, the Borcherds product has
the following convergent product expression on the subset
$\lset\Im \Omega>\lambda I_2\rset$ of~$\UHPtwo$:  
$$
\BL(\psi)(\Omega)=
q^A \zeta^B \xi^C 
\prod_{\substack{n,m,r\in \Z,\ m\ge0\\
\text{if $m=0$ then $n\ge0$}\\
\text{if $m=n=0$ then $r<0$}}}
(1-q^n\zeta^r\xi^{mN})^{c(nm,r)}.
$$
Also, let $\lambda(r)=c(0,r)$ for~$r\in\Znn$, and recall the
corresponding theta block,
$$
\TB(\lambda)(\tau,z)=\eta(\tau)^{\lambda(0)}
\prod_{r\ge1}(\vartheta_r(\tau,z)/\eta(\tau))^{\lambda(r)}
\qquad\text{where }\vartheta_r(\tau,z)=\vartheta(\tau,rz).
$$
On $\lset\Im \Omega>\lambda I_2\rset$ the Borcherds product is a
rearrangement of a convergent infinite series,
$$
\BL(\psi)(\Omega)
=\TB(\lambda)(\tau,z)\xi^C\exp\left(-\Grit(\psi)(\Omega)\right).  
$$
\end{theorem}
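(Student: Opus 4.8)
The plan is to realize $\BL(\psi)$ as a specialization of Borcherds' singular theta lift and to import the product expansion, weight, and divisor from Borcherds' general theory, after which the three closed-form statements follow by reorganizing the product. First I would invoke the exceptional isogeny $\mathrm{Sp}(4)\cong\mathrm{Spin}(2,3)$ and the identification of $\UHPtwo$ with a component of the Hermitian symmetric domain attached to the even lattice $\Lambda=U\oplus U\oplus\langle-2N\rangle$ of signature $(2,3)$, under which $\KN$ together with the Fricke involution $\mu_N$ becomes an arithmetic subgroup of $\mathrm{O}(\Lambda)$. Via the theta decomposition of Jacobi forms, a weakly holomorphic $\psi\in\JzeroNwh$ of weight $0$ and index $N$ corresponds to a weakly holomorphic vector-valued modular form $F_\psi$ of weight $-1/2$ for the Weil representation of the discriminant form of $\Lambda$, in such a way that the Fourier coefficient of $F_\psi$ indexed by the discriminant $D=4nN-r^2$ and the class of $r$ is $c(n,r)$; thus the coefficients with $4nN-r^2\le0$ are exactly the principal part together with the constant term of $F_\psi$.

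Next I would apply Borcherds' theorem on automorphic forms with singularities on Grassmannians to $F_\psi$. It produces a meromorphic modular form on $\mathrm{O}(\Lambda)$ whose divisor is the Heegner divisor of the principal part, with multiplicity $\sum_{i\ge1}c(i^2nm,ir)$ along the primitive rational quadratic divisor indexed by $(n,m,r)$, and which inside each Weyl chamber has a product expansion $\e((Z,\rho))\prod_{\lambda>0}(1-\e((\lambda,Z)))^{c(\lambda)}$. Transporting back to Siegel coordinates, a positive root $\lambda$ becomes a triple $(n,r,m)$ with $(\lambda,Z)=n\tau+rz+mN\omega$, its multiplicity $c(\lambda)$ becomes $c(nm,r)$, and the Weyl vector $\rho$ becomes the triple $(A,B,C)$ computed from the boundary behavior of $\psi$, yielding the prefactor $q^A\zeta^B\xi^C$; Borcherds' weight formula gives $k=\tfrac12c(0,0)$. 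Condition~(1) is the integrality of the principal part and constant term of $F_\psi$, which makes the divisor and all product exponents integral; condition~(3) says every Heegner multiplicity is nonnegative, hence $\BL(\psi)$ is holomorphic, not merely meromorphic; condition~(2), $A\in\Z$, is the remaining integrality needed to make $q^A\zeta^B\xi^C$ single-valued, in analogy with the divisibility $12\mid k+\ell$ for theta blocks. Finally, the Fricke involution $\mu_N$ is an Atkin--Lehner reflection on the orthogonal side, and tracking its effect on the automorphy factor and on the Heegner divisor gives $\BL(\psi)\wtop{\mu_N}=(-1)^{k+D_0}\BL(\psi)$, where $D_0=\sum_{n\le-1}\sigma_0(|n|)c(n,0)$ counts, with multiplicity, the diagonal components fixed by the reflection; hence $\epsilon=(-1)^{k+D_0}$.

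For the two remaining expansions I would work on the region $\{\Im\Omega>\lambda I_2\}$, where Borcherds' estimates on the growth of the $c(n,r)$ guarantee absolute convergence, and take a branch of the logarithm:
$$
\log\BL(\psi)(\Omega)=A\log q+B\log\zeta+C\log\xi-\sum_{n,r,m}c(nm,r)\sum_{j\ge1}\tfrac1j\,q^{jn}\zeta^{jr}\xi^{jmN},
$$
splitting the sum by $m=0$ versus $m\ge1$. For $m=0$ one has $c(nm,r)=c(0,r)$, and using $c(0,r)=c(0,-r)$ together with the infinite-product forms of $\eta$ and $\vartheta$ one checks that $q^A\zeta^B$ times the $m=0$ factors reassembles into $\TB(\lambda)(\tau,z)=\eta^{\lambda(0)}\prod_{r\ge1}(\vartheta_r/\eta)^{\lambda(r)}$ with $\lambda(r)=c(0,r)$; here the value of $A$ is forced because $\vartheta_r/\eta$ has leading term $q^{1/12}$ and $\eta$ has leading term $q^{1/24}$, and likewise for $B$. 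For $m\ge1$, regrouping by the value $M=jm$ and matching the divisor sums against the action of the index-raising operators $V_M$ identifies this contribution with the Fourier--Jacobi series of $\Grit(\psi)(\Omega)$, the extended Gritsenko lift of the weakly holomorphic $\psi$; exponentiating the split logarithm and restoring the leftover $\xi^C$ gives $\BL(\psi)=\TB(\lambda)(\tau,z)\,\xi^C\exp(-\Grit(\psi)(\Omega))$.

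The substance is entirely Borcherds' theorem; the work is bookkeeping, but delicate bookkeeping. I expect the hardest points to be pinning down the lattice dictionary precisely enough that the exponents come out as $c(nm,r)$ with no rescaling, computing the Weyl vector so that it is literally $(A,B,C)$ in the stated normalization, and determining the Fricke sign $(-1)^{k+D_0}$; the convergence on $\{\Im\Omega>\lambda I_2\}$ likewise rests on nontrivial but standard coefficient estimates. Everything else follows the template of Gritsenko--Nikulin and Gritsenko from which this statement is quoted.
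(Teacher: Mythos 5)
There is an important mismatch of expectations here: the paper does not prove this theorem at all. It is explicitly quoted --- ``quoted here from \cite{MR3713095}, \dots a special case of Theorem~3.3 of \cite{gpy15}, which in turn is quoted from \cite{gn98,grit12} and relies on the work of Richard Borcherds'' --- so there is no in-paper argument to compare yours against. Judged against the cited sources, your outline is the standard and correct route: theta decomposition of $\psi\in\Jwh0N$ into a weight~$-1/2$ vector-valued form for the Weil representation of the discriminant form of $2U\oplus\langle-2N\rangle$, Borcherds' singular theta lift on this signature~$(2,3)$ lattice to get the product expansion, weight $\tfrac12c(0,0)$, and Heegner divisor with multiplicities $\sum_{i\ge1}c(i^2nm,ir)$ (so that conditions (1)--(3) give integrality, single-valuedness of $q^A\zeta^B\xi^C$, and holomorphy), and then the $m=0$ versus $m\ge1$ split of $\log\BL(\psi)$ to reassemble the theta block $\TB(\lambda)$ and identify the $m\ge1$ part with $-\Grit(\psi)$ via the $V_m$ operators --- this last rearrangement is exactly how \cite{gpy15} obtains $\BL(\psi)=\TB(\lambda)\,\xi^C\exp(-\Grit(\psi))$.

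That said, be aware that what you label ``delicate bookkeeping'' is precisely where the content of the quoted theorem lives, and your sketch does not carry it out: the normalization making the exponents literally $c(nm,r)$ and the Weyl vector literally $(A,B,C)$, the convergence on $\{\Im\Omega>\lambda I_2\}$, and above all the Fricke eigenvalue. Your stated mechanism for the sign --- that $D_0=\sum_{n\le-1}\sigma_0(|n|)c(n,0)$ ``counts the diagonal components fixed by the reflection'' --- is a heuristic, not a derivation; in Gritsenko--Nikulin and \cite{gpy15} the factor $(-1)^{D_0}$ arises from the character of the lift on the extended group $\KN^+$, and pinning it down is a genuine computation. So your proposal is an accurate map of the literature's proof, but as a self-contained proof it leaves open exactly the points that the references were cited to settle.
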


In the theorem, the divisor of the Borcherds product $\BL(\psi)$ is a sum
of Humbert surfaces with multiplicities, the multiplicities
necessarily nonnegative for holomorphy.  Let $\KN^+$ denote the
group generated by $\KN$ and the paramodular Fricke involution~$\mu_N$.
The sum in item~(3) of the theorem is the multiplicity of
the following Humbert surface in the divisor,
in which $t_o=\smallmat{n}{r/2}{r/2}{mN}$ and $D=4nmN-r^2<0$,
$$
H_N(t_o)=H_N(-D,r)=\KN^+\lset\Omega\in\UHPtwo:\ip\Omega{t_o}=0\rset
\subseteq\KN^+\bs\UHPtwo.
$$
This surface depends only on the discriminant~$D$ and on~$r$,
so that we may take $t_o$ with $m=1$, and furthermore, by work of
Gritsenko and Hulek \cite{gh98}, it depends only on the residue
class of~$r$ modulo~$2N$.
%

\section{Construction of the nonlift newforms $f_{N}$\label{sectionconstruction}}

The following proposition,
other than its last statement,
was proven in~\cite{py15} using the method of integral closure.
Here we give a new proof using Borcherds products,
as indicated in the added last statement.
This new proof works directly in the weight~$3$ space
with no need to span the weight~$6$ space as the integral closure
method did.
As a byproduct of this construction, $f_{61}$ is clearly congruent to
a Gritsenko lift modulo~$43$.

\begin{proposition}
\label{prop61}
There is a nonlift Hecke eigenform $f_{61}\in\CFsthreelev{61}^{-}(\Z)$ 
with unit content, given by a rational function of Gritsenko lifts,
$$
f_{61} = -9G[1] -2G[2] +22G[3] +9G[4] -10G[5] +19G[6] 
-43\dfrac{G[1]G[6]}{G[2]}.
$$
Here $G[j]=\Grit\left(\TB_3(D_j)\right)$ for $j=1,\dotsc,6$ are
Gritsenko lifts of theta blocks, with
\begin{alignat*}2
D_1&=[2, 2, 2, 3, 3, 3, 3, 5, 7]\qquad
&D_2&=[2, 2, 2, 2, 3, 4, 4, 4, 7]\\
D_3&=[2, 2, 2, 2, 3, 3, 4, 6, 6]\qquad
&D_4&=[1, 2, 3, 3, 3, 3, 4, 4, 7]\\
D_5&=[1, 2, 3, 3, 3, 3, 3, 6, 6]\qquad
&D_6&=[1, 2, 2, 2, 4, 4, 4, 5, 6].
\end{alignat*}
The set $\{f_{61},G[1],\dotsc,G[6]\}$ is a basis of~$\CFsthreelev{61}$.
Each Gritsenko lift $G[j]$ is also a Borcherds product
$B[j]=\BL(-(\TB_3(D_j) \wtop{V_2})/\TB_3(D_j))$. 
\end{proposition}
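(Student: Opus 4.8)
The plan has four stages. First I would realize the $G[j]$ as Gritsenko lifts of theta-block Jacobi cusp forms. For each $D_j$, $j=1,\dots,6$, Theorem~\ref{GSZthetablockthm} applies with $k=3$ and $\ell=9$: then $12\mid k+\ell$; a direct count gives $\sum_i d_{j,i}^2=122=2\cdot61$; and the positivity $\tfrac14+\tfrac12\sum_i\bar B_2(d_{j,i}x)>0$ is a finite check over $x\in[0,\tfrac12]\cap\tfrac1{122}\Z$. Hence $\TB_3(D_j)\in\Jcusp{3}{61}$, with integral Fourier coefficients by the product formulas for $\eta$ and $\vartheta$, so $G[j]=\Grit(\TB_3(D_j))\in\CFsthreelev{61}^{-}(\Z)$ (the operators $V_m$ preserve $\Z$-integrality). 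A rank computation on finitely many Fourier coefficients shows $\TB_3(D_1),\dots,\TB_3(D_6)$ are linearly independent; since $\dim\Jcusp{3}{61}=6$ by~\cite{ez85} and $\Grit$ is injective, $G[1],\dots,G[6]$ is a basis of the $6$-dimensional space $\Grit(\Jcusp{3}{61})$ of Gritsenko lifts.

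Second, the identities $G[j]=B[j]$. Put $\psi_j=-(\TB_3(D_j)\wtop{V_2})/\TB_3(D_j)$; since $V_2$ preserves weight and doubles the index, $\psi_j$ has weight $0$ and index $61$, and I would check $\psi_j\in\Jwh0{61}$ by comparing the zero divisor of $\TB_3(D_j)\wtop{V_2}$ --- read off from the zeros of $\vartheta_d$ under the substitutions $\tau\mapsto2\tau$ and $\tau\mapsto(\tau+b)/2$ defining $V_2$ --- with that of the theta block $\TB_3(D_j)$, leaving poles only at the cusp. Reading the Fourier coefficients $c_j(n,r)$ off the theta-block expansion, one then verifies the hypotheses of Theorem~\ref{BPthm}: $c_j(n,r)\in\Z$ when $4\cdot61\,n-r^2\le0$, $A_j\in\Z$, $c_j(0,0)=6$ (Borcherds weight $3$), $D_{0,j}$ even (Fricke sign $(-1)^{3+D_{0,j}}=-1$), and positivity~(3); so $B[j]:=\BL(\psi_j)\in\MFsthreelev{61}^{-}$. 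That $B[j]=G[j]$ then follows from the last displayed identity of Theorem~\ref{BPthm}, which reads $B[j]=\TB(\lambda_j)\,\xi^{C_j}\exp(-\Grit(\psi_j))$ with $\lambda_j(r)=c_j(0,r)$: computing the $q^0$-part of $\psi_j$ gives $\lambda_j(r)=\#\{i:d_{j,i}=r\}$ for $r\ge1$ and $\lambda_j(0)=6$, hence $\TB(\lambda_j)=\TB_3(D_j)$ and $C_j=61$, and since $\psi_j\wtop{V_1}=\psi_j=-(\TB_3(D_j)\wtop{V_2})/\TB_3(D_j)$, expanding $\xi^{61}\exp(-\Grit(\psi_j))$ reproduces the Fourier--Jacobi series $\sum_{m\ge1}(\TB_3(D_j)\wtop{V_m})\xi^{61m}$ of $\Grit(\TB_3(D_j))$. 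This is the mechanism of~\cite{gpy15}, and the identical argument for all $j$ proves the last sentence of the proposition.

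Third, holomorphy of $f_{61}$ and the remaining claims. With $D_7:=[1,2,2,3,3,3,5,5,6]=(D_1\cup D_6)\setminus D_2$ as multisets (so $\TB_3(D_1)\TB_3(D_6)/\TB_3(D_2)=\TB_3(D_7)$), set $\psi:=\psi_1+\psi_6-\psi_2\in\Jwh0{61}$; linearity of $\Grit$ and the identities $\exp(-\Grit(\psi_j))=G[j]/(\TB_3(D_j)\xi^{61})$ from Stage~2 give $\tfrac{G[1]G[6]}{G[2]}=\TB_3(D_7)\,\xi^{61}\exp(-\Grit(\psi))$, which is $\BL(\psi)$ once $\psi$ is checked against Theorem~\ref{BPthm} --- here $c(0,0)=6$, $C=61$, conditions~(1) and~(2) are inherited from the $\psi_j$, and the one substantive point is positivity~(3), whose multiplicities are the differences of those for $B[1],B[6]$ and $B[2]$; i.e.\ (3) says exactly that $\operatorname{div}B[1]+\operatorname{div}B[6]-\operatorname{div}B[2]$ is an effective combination of Humbert surfaces. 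Thus $\tfrac{G[1]G[6]}{G[2]}=\BL(\psi)$ is a holomorphic weight~$3$, level~$61$ paramodular form, with Fricke sign $-1$ (as $D_{0,1}+D_{0,6}-D_{0,2}$ is even) and cuspidal (no singular Fourier terms, or by testing the Siegel $\Phi$ operator on the finitely many Reefschl\"ager representatives), so $f_{61}$ as defined lies in $\CFsthreelev{61}^{-}$. Its Fourier coefficients are integral, since the $G[j]$ are and $B[2]=q^{A_2}\zeta^{B_2}\xi^{61}\prod(1-q^n\zeta^r\xi^{61m})^{c_2(nm,r)}$ is a monomial times a unit power series, so $43\,G[1]G[6]/G[2]$ has an integral $q$-expansion; exhibiting one coefficient equal to $\pm1$ gives unit content (and dropping the multiple of $43$ yields the byproduct $f_{61}\equiv-9G[1]-2G[2]+22G[3]+9G[4]-10G[5]+19G[6]\pmod{43}$). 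Finally, a nonzero $7\times7$ minor of Fourier coefficients shows $f_{61},G[1],\dots,G[6]$ are linearly independent; as $\dim\CFsthreelev{61}=7$ by~\cite{i07} they form a basis, $f_{61}$ is a nonlift, and since $\Grit$ is Hecke-equivariant the complementary quotient $\CFsthreelev{61}/\Grit(\Jcusp{3}{61})$ is a one-dimensional Hecke module whose eigensystem differs from those of $G[1],\dots,G[6]$ (checked on one Hecke operator), so up to adding a Gritsenko lift $f_{61}$ is a Hecke eigenform --- the six coefficients $-9,-2,22,9,-10,19$ being the unique choice making the combination a Hecke eigenvector and $-43$ the unique multiplier, up to sign, giving unit content.

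The main obstacle is Stage~2 together with the positivity check for $\psi=\psi_1+\psi_6-\psi_2$ in Stage~3: one must confirm that each $\psi_j$, and then $\psi$, genuinely lies in $\Jwh0{61}$ (applying $V_2$ loses none of the zeros of the theta block) and satisfies condition~(3) of Theorem~\ref{BPthm}, the latter for $\psi$ being precisely the divisor inequality $\operatorname{div}B[1]+\operatorname{div}B[6]\ge\operatorname{div}B[2]$. It is the identities $G[j]=B[j]$ that make $\operatorname{div}G[2]$ a manageable nonnegative sum of Humbert surfaces; without them the holomorphy of $f_{61}$ would again require spanning the weight~$6$ space, which is exactly what this proof is meant to avoid.
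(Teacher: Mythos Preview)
Your Stage~2 contains a genuine gap. From the identity $\BL(\psi_j)=\TB_3(D_j)\,\xi^{61}\exp(-\Grit(\psi_j))$ you claim that expanding $\exp(-\Grit(\psi_j))$ reproduces the full Fourier--Jacobi series $\sum_{m\ge1}(\TB_3(D_j)\wtop{V_m})\xi^{61m}$. Knowing only $\psi_j\wtop{V_1}=-(\TB_3(D_j)\wtop{V_2})/\TB_3(D_j)$ gives you the first two Fourier--Jacobi coefficients---this is exactly the content of Corollary~\ref{bpecor}---but the third coefficient of $\BL(\psi_j)$ involves both $\psi_j\wtop{V_2}$ and $\psi_j^2$, and there is no a~priori identity forcing this to equal $\TB_3(D_j)\wtop{V_3}$. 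The ``mechanism of~\cite{gpy15}'' you invoke does not prove $B[j]=G[j]$ outright; it proves agreement to second order.

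The paper therefore runs the argument in the opposite order. It first computes the Humbert divisors of $B[1],B[2],B[6]$ directly from the singular coefficients of $\psi_1,\psi_2,\psi_6$ (Figure~\ref{table1}), concludes that $\mathfrak f=B[1]B[6]/B[2]$ is holomorphic, and proves $\mathfrak f$ is cuspidal via a separate lemma (Lemma~\ref{lemmahumbert}: vanishing on $H_N(1,1)$ forces cuspidality when $N$ is prime). None of this uses $B[j]=G[j]$. Only after $\{G[1],\dots,G[6],\mathfrak f\}$ is shown to span $\CFsthreelev{61}$ does one check, from Fourier coefficients of that basis, that every element is determined by its first two Fourier--Jacobi coefficients; \emph{then} Corollary~\ref{bpecor} yields $B[j]=G[j]$. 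Your ordering (first $B[j]=G[j]$, then holomorphy of the quotient) would be fine if Stage~2 were sound, but as written the equality $B[j]=G[j]$ is circular: it depends on a determination-by-two-coefficients fact that you only have access to after spanning the space. A minor point: unit content in the paper comes from two coprime coefficients ($-75$ and $107$), not from a coefficient equal to~$\pm1$.
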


The existence of $f_{61}$ follows from the dimension formula of
Ibukiyama for prime levels~$p$ in~\cite{i07}, stated below for convenience.
A more general dimension formula for weights $k\ge3$ and squarefree
levels $N\ge3$ is due to Ibukiyama and Kitayama~\cite{ik17}.

\begin{theorem}[Ibukiyama dimension formula]
Let $p\ge5$ be prime.  Then
\begin{align*}
\dim\CFsthreelev p
&=\frac{1}{2880}(p^2-1)-1 \\
&\quad+\frac{1}{64}(p+1)\left(1-\left(\frac{-1}{p}\right)\right)
      +\frac{5}{192}(p-1)\left(1+\left(\frac{-1}{p}\right)\right) \\
&\quad+\frac{1}{72}(p+1)\left(1-\left(\frac{-3}{p}\right)\right)
      +\frac{1}{36}(p-1)\left(1+\left(\frac{-3}{p}\right)\right) \\
&\quad+\frac{1}{8}\left(1-\left(\frac{2}{p}\right)\right) 
      +\frac{1}{5}\left(1-\left(\frac{5}{p}\right)\right) 
+\left\{\frac{1}{6}\ \text{\rm if $ p \equiv 5 \bmod 12$}\right\}.
\end{align*}
\end{theorem}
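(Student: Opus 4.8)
The plan is to realize $\dim\CFsthreelev p$ as a holomorphic Euler characteristic on the paramodular threefold and to evaluate that by an equivariant Riemann--Roch (holomorphic Lefschetz) computation; because weight~$3$ is the canonical weight in degree~$2$, an extra Hodge-theoretic input is needed beyond what suffices in the stable range $k\ge5$.

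Set $X_p=\paramodulargroup p\backslash\UHPtwo$, the coarse paramodular threefold, and fix a smooth projective toroidal compactification $\ol X_p$ that also resolves the quotient singularities. On the open part the Hodge line bundle $\mathcal L$ of weight-$1$ forms satisfies $\mathcal L^{\otimes3}\cong K_{X_p}$, and cusp forms of the canonical weight correspond to holomorphic top-degree forms; so, after checking via the Reid--Tai criterion and the explicit structure of the rank-$1$ toroidal boundary in degree~$2$ that a weight-$3$ cusp form extends to a section of $K_{\ol X_p}$ with the correct vanishing along the exceptional and boundary divisors, one obtains $\CFsthreelev p\cong H^0(\ol X_p,\Omega^3_{\ol X_p})$. (There is no Klingen--Eisenstein series of weight~$3$, so $\MFsthreelev p=\CFsthreelev p$ and nothing is lost.) I would then invoke the vanishing $h^{1,0}(\ol X_p)=h^{2,0}(\ol X_p)=0$ for the paramodular threefold: $H^1(\ol X_p)=0$ (no degree-$1$ cuspidal cohomology for $\Spgp(2)$, and no Eisenstein part), while holomorphic $2$-forms vanish because the automorphic representations contributing to $H^2(\ol X_p)$ are all of Hodge type $(1,1)$. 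Hodge symmetry and Serre duality now give
$$
\dim\CFsthreelev p=h^{3,0}(\ol X_p)=1-\chi(\mathcal O_{\ol X_p})=1-\tfrac1{24}\int_{\ol X_p}c_1c_2 .
$$

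It remains to compute $\tfrac1{24}\int_{\ol X_p}c_1c_2$, which I would do without passing to the resolution, by Kawasaki's orbifold Riemann--Roch --- equivalently, the holomorphic Lefschetz fixed-point formula summed over the elliptic conjugacy classes of $\paramodulargroup p$ on $\UHPtwo$ together with a parabolic contribution from the cusps; this is essentially Ibukiyama's route and is the geometric side of the Selberg/Arthur trace formula evaluated at the identity Hecke operator. The identity class gives the volume term, proportional to $\operatorname{vol}(\paramodulargroup p\backslash\UHPtwo)$; combined with the two one-dimensional Klingen cusps and the zero-dimensional cusps --- whose local contributions come from Hirzebruch--Mumford proportionality for $\Spgp(2)$ and the explicit toroidal data at prime level --- this produces the $\tfrac1{2880}(p^2-1)-1$ part. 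The finite-order elements of $\paramodulargroup p$ with fixed points on $\UHPtwo$ arise from polarized abelian surfaces with extra automorphisms, so their characteristic polynomials are products of cyclotomic factors; this forces order dividing $4$, $6$, $8$, $10$, or $12$, hence complex multiplication by one of $\Q(i)$, $\Q(\zeta_3)$, $\Q(\zeta_8)$, $\Q(\zeta_5)$, $\Q(\zeta_{12})$. For each such class one performs a Hijikata-style local--global count of the conjugacy classes in the level-$p$ structure: the global factor is a class number of the relevant quadratic order, and the local factor at $p$ is $1\pm\left(\frac{d}{p}\right)$ according as $p$ splits, is inert, or ramifies in the CM field. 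This is the source of the factors $(p+1)\bigl(1-\left(\frac{-1}{p}\right)\bigr)$, $(p-1)\bigl(1+\left(\frac{-1}{p}\right)\bigr)$, $(p+1)\bigl(1-\left(\frac{-3}{p}\right)\bigr)$, $(p-1)\bigl(1+\left(\frac{-3}{p}\right)\bigr)$, $1-\left(\frac{2}{p}\right)$, $1-\left(\frac{5}{p}\right)$, and of the extra $\tfrac16$ in the split order-$12$ case $p\equiv5\bmod12$. Weighting the local Lefschetz numbers $\prod_j(1-\zeta_j)^{-1}$ over each orbit by these counts yields the remaining terms, and assembling the identity, elliptic, and boundary contributions and simplifying gives the stated closed form.

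The main obstacle is twofold. The sharper part is the low-weight Hodge input $h^{1,0}(\ol X_p)=h^{2,0}(\ol X_p)=0$: for $k\ge5$ the higher cohomology of $\mathcal L^{\otimes k}(-D)$ vanishes and Riemann--Roch computes $\dim\CFswtgp k{\paramodulargroup p}$ directly, but at the canonical weight one genuinely needs the structure of $H^\bullet(\ol X_p)$, uniformly in $p$ and including the small primes where $\ol X_p$ is far from general type. The more laborious part is the elliptic-term bookkeeping: one must enumerate \emph{all} torsion conjugacy classes of $\paramodulargroup p$, and since several of them have one-dimensional fixed loci in $\UHPtwo$ rather than isolated fixed points, the Atiyah--Singer/Kawasaki form of the fixed-point theorem (with integration over the fixed locus) is required, not the isolated-point version; one must likewise track the toroidal boundary discrepancies and any non-canonical quotient points so that the ``cusp form'' normalization is exact. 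With those in hand, the remaining step is routine algebraic simplification in $p$ and the residue symbols.
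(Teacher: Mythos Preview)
The paper does not prove this theorem at all: it is quoted verbatim from Ibukiyama~\cite{i07} and introduced with the words ``stated below for convenience.'' So there is no proof in the paper for your attempt to be compared against.

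That said, your sketch is a reasonable outline of one of the standard routes to such formulae. Ibukiyama's own argument in~\cite{i07} proceeds via the Selberg trace formula applied to $\paramodulargroup p$ rather than via the geometric Riemann--Roch/Kawasaki formulation you describe, though as you correctly note the two are essentially dual: the identity, elliptic, and parabolic terms on the spectral side correspond to your volume, torsion, and boundary contributions. Your identification $\CFsthreelev p\cong H^0(\ol X_p,\Omega^3)$ and the reduction to $1-\chi(\mathcal O)$ via the vanishing of $h^{1,0}$ and $h^{2,0}$ is the approach taken in the work of Gritsenko--Hulek and others on the geometry of paramodular threefolds; Ibukiyama instead handles the low-weight issue by working out the trace formula contributions directly, which sidesteps the Hodge-theoretic input but requires equally delicate analysis of the continuous and residual spectrum at weight~$3$.

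Your sketch honestly flags its own gaps: the uniform vanishing $h^{2,0}(\ol X_p)=0$, the complete enumeration of torsion classes with possibly positive-dimensional fixed loci, and the exact boundary bookkeeping are each substantial. None of this is wrong as a plan, but it is a plan rather than a proof, and the numerical coefficients $\tfrac1{64},\tfrac5{192},\tfrac1{72},\tfrac1{36},\tfrac18,\tfrac15,\tfrac16$ would only emerge after carrying out the local Lefschetz computations in full --- which is precisely the content of Ibukiyama's paper.
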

Ibukiyama's formula gives $\dim\CFsthreelev{61}=7$, and
the dimension formula for Jacobi forms in~\cite{ez85},
$$
\dim \Jcusp{3}{m}
=\sum_{j=1}^{m-1} 
\left( \dim\CFswtgp{2+2j}{\SLtwoZ}-\floor\left( {j^2}/{(4m)} \right) \right),
$$
gives $\dim\Jcusp{3}{61}=6$.
We begin spanning $\CFsthreelev{61}$
by using theta blocks to span $\Jcusp{3}{61}$.
With the $D_j$ from Proposition~\ref{prop61},
Theorem~\ref{GSZthetablockthm} shows that $\TB_3(D_j)\in\Jcusp{3}{61}$
for $j=1,\dotsc,6$.
By computing Fourier coefficients \cite{yuen22},
we see that the theta blocks $\TB_3(D_j)$ form a basis of~$\Jcusp{3}{61}$
and the Gritsenko lifts $G[j]=\Grit\left(\TB_3(D_j)\right)$ a basis of
the Gritsenko lift subspace in $\CFsthreelev{61}$.
We finish spanning~$\CFsthreelev{61}$ by using a special case of the
Borcherds Products Everywhere theorem.  The following result is
Theorem~6.6 of~\cite{gpy15} specialized to the case where the
$q$-order of vanishing is~$1$, and so we call it a corollary.

\begin{corollary}
\label{bpecor}  
Let $k,N,\ell,d_1,\dotsc,d_\ell\in\Zpos$.  Assume  
$\phi= \TB_k[d_1, \dotsc, d_{\ell}] \in \Jcusp{k}{N}$ and $k+\ell=12$.    
Let $\psi= -\frac{\phi \wtop{V_2}}{\phi}$.
Then $\psi\in\Jwh{0}{N}(\Z)$, 
$\BL(\psi) \in \MFskN^{\epsilon}$ for $\epsilon = (-1)^k$, and 
$\BL(\psi)$ and $\Grit(\phi)$ have equal first and second
Fourier--Jacobi coefficients.
\end{corollary}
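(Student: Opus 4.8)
The plan is to recognize this corollary as Theorem~6.6 of~\cite{gpy15} in the special case where the theta block $\phi=\TB_k[d_1,\dotsc,d_\ell]$ has $q$-order of vanishing exactly~$1$, so that the index-raising operator $V_{n_0+1}$ of that theorem becomes $V_2$; the one point to check before invoking it is that the hypothesis $k+\ell=12$ forces this $q$-order to be~$1$. For this, recall that the $q$-expansion of $\eta(\tau)=q^{1/24}\prod_{n\ge1}(1-q^n)$ begins with $q^{1/24}$, while that of $\vartheta_d(\tau,z)=q^{1/8}(\zeta^{d/2}-\zeta^{-d/2})\prod_{n\ge1}(1-q^n\zeta^d)(1-q^n\zeta^{-d})(1-q^n)$ begins with $q^{1/8}(\zeta^{d/2}-\zeta^{-d/2})$; hence the $q$-expansion of $\phi=\eta^{2k-\ell}\prod_{j=1}^{\ell}\vartheta_{d_j}$ begins with $q^{(2k-\ell)/24+\ell/8}\,\phi_1(\zeta)$, where $\phi_1(\zeta)$ denotes the Laurent polynomial $\prod_{j=1}^{\ell}(\zeta^{d_j/2}-\zeta^{-d_j/2})$ and $(2k-\ell)/24+\ell/8=(k+\ell)/12$, which equals~$1$ exactly when $k+\ell=12$. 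The Laurent polynomial $\phi_1(\zeta)$ is nonzero and has integral $\zeta$-exponents, because $\sum_j d_j\equiv\sum_j d_j^2=2N\equiv0\pmod 2$ by the index relation of Theorem~\ref{GSZthetablockthm}; thus $\phi$ has $q$-order exactly~$1$, consistent with its being a cusp form. Theorem~6.6 of~\cite{gpy15} then yields $\psi=-\phi\wtop{V_2}/\phi\in\Jwh{0}{N}(\Z)$ and $\BL(\psi)\in\MFskN^{\epsilon}$; in the notation of Theorem~\ref{BPthm} the Fricke sign is $(-1)^{k+D_0}$, and $D_0=0$ here because $\phi$ and $\phi\wtop{V_2}$ both have $q$-order~$1$, so $\psi$ has no negative powers of~$q$, giving $\epsilon=(-1)^k$.

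For the claim about Fourier--Jacobi coefficients I would argue directly from the last formula of Theorem~\ref{BPthm}. A routine expansion of the theta-block product gives the coefficient of~$q$ in $\phi$ as $\phi_1(\zeta)$ and the coefficient of~$q^2$ as $-\phi_1(\zeta)\bigl(2k+\sum_{j=1}^{\ell}(\zeta^{d_j}+\zeta^{-d_j})\bigr)$; by the standard action of $V_2$ on Fourier coefficients the latter is also the coefficient of~$q$ in $\phi\wtop{V_2}$, so the $q^0$-term of $\psi=-\phi\wtop{V_2}/\phi$ is $2k+\sum_{j=1}^{\ell}(\zeta^{d_j}+\zeta^{-d_j})$ --- the factor $\phi_1$ cancels. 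In the notation of Theorem~\ref{BPthm} this means $\lambda(0)=2k$, $\lambda(r)=\#\{j:d_j=r\}$ for $r\ge1$, and $\lambda(r)=0$ otherwise, so $\TB(\lambda)=\eta^{2k-\ell}\prod_{j=1}^{\ell}\vartheta_{d_j}=\phi$ and $C=\tfrac12\sum_{r\ge1}r^2\lambda(r)=\tfrac12\sum_{j=1}^{\ell}d_j^2=N$. Theorem~\ref{BPthm} then reads $\BL(\psi)(\Omega)=\phi(\tau,z)\,\xi^N\exp\bigl(-\Grit(\psi)(\Omega)\bigr)$, and since the additive lift $\Grit(\psi)$ has $\xi$-expansion $\psi(\tau,z)\,\xi^N+O(\xi^{2N})$ (its leading term being $\psi\wtop{V_1}=\psi$), expanding the exponential yields $\BL(\psi)(\Omega)=\phi\,\xi^N-\phi\psi\,\xi^{2N}+O(\xi^{3N})$. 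Hence the first two Fourier--Jacobi coefficients of $\BL(\psi)$ are $\phi$ and $-\phi\psi=\phi\wtop{V_2}$, which are exactly the first two Fourier--Jacobi coefficients of the Gritsenko lift $\Grit(\phi)$.

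The step I expect to be the real obstacle --- were one to prove the result without citing~\cite{gpy15} --- is showing that $\psi$ is genuinely weakly holomorphic, has integral Fourier coefficients, and satisfies the nonnegativity condition~(3) of Theorem~\ref{BPthm}. Weak holomorphy is the statement that $\phi\wtop{V_2}$ vanishes along the divisor of the Jacobi form $\phi$ to at least the order of vanishing of $\phi$, so that the quotient $-\phi\wtop{V_2}/\phi$ has no poles in~$z$; this needs a local analysis of the theta block near its torsion divisors and is the substance of the ``Borcherds products everywhere'' theorem. Integrality of $\psi$ and the nonnegativity of the Humbert-surface multiplicities that make $\BL(\psi)$ holomorphic rather than merely meromorphic on $\UHPtwo$ are established at the same time. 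Once the $q$-order has been fixed at~$1$ as above, all of this is precisely the content of Theorem~6.6 of~\cite{gpy15}, so no further argument is required here.
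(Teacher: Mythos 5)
Your proposal is correct and follows the paper's route: the paper itself offers no argument beyond observing that the statement is Theorem~6.6 of~\cite{gpy15} specialized to $q$-order of vanishing~$1$, which is exactly the reduction you make. Your added verifications — that $k+\ell=12$ forces $q$-order $(k+\ell)/12=1$ with nonvanishing integral leading Laurent coefficient, that $D_0=0$ gives $\epsilon=(-1)^k$, and the expansion $\BL(\psi)=\phi\,\xi^N-\phi\psi\,\xi^{2N}+\bigOh{\xi^{3N}}$ recovering the first two Fourier--Jacobi coefficients — are all accurate and simply make explicit what the paper leaves to the citation.
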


For $j=1,\dotsc,6$ set $\psi_j=-(\TB_3(D_j) \wtop{V_2})/\TB_3(D_j)$
and $B[j]=\BL(\psi_j)$.  Thus $\psi_j \in \Jwh{0}{61}(\Z)$
and $B[j]\in\MFsthreelev{61}$ by the corollary.
The divisor of~$B[j]$ consists of Humbert surfaces,
the multiplicity of $B[j]$ on $H_{61}\smallmat{n_o}{r_o/2}{r_o/2}{61m_o}$
being $\sum_{i\ge1}\fcJ{i^2n_om_o,ir_o}{\psi_j}$.  
The divisors of $B[1], B[2]$, and~$B[6]$ in Figure~\ref{table1} show
that $\frak f=B[1]B[6]/B[2]$ is also holomorphic because the Humbert
multiplicities in its divisor are all nonnegative, and so
${\idealf}\in\MFsthreelev{61}^{-}$.
In fact ${\idealf} \in  \CFsthreelev{61}$ 
because the divisor of $\idealf$ contains $H_{61}(1,1)$ and $N=61$ is prime, 
as the following lemma shows.  

\newcommand{\Humhat}{{\hat H}}
\begin{lemma}
\label{lemmahumbert}
Let $N$ be prime and let $f \in \MFskN^\pm$ be a Fricke eigenform.
If~$f$ vanishes on the Humbert surface~$H_{N}(1,1)$ then~$f$ is a cusp form.  
\end{lemma}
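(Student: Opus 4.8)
The plan is to use Reefschläger's double-coset decomposition to reduce the cusp-form condition to a finite check at the cusps indexed by divisors $m \mid N$, and then to exploit primality of $N$ so that there are only two cusps to consider, $m = 1$ and $m = N$. Recall that $f \in \MFskN^\pm$ is a cusp form if and only if $\Phi(f\wtk g) = 0$ for one representative $g$ of each double coset $\KN u(\alpha_m)\PtwooneQ$, $m \mid N$. Since $N$ is prime, the relevant representatives are $u(\alpha_1) = 1_4$ and $u(\alpha_N)$. Moreover $u(\alpha_N)$ differs from the paramodular Fricke involution $\mu_N$ only by an element of $\PtwooneQ$ on the right and an element of $\KN$ on the left (up to a scalar built from $\sqrt N$, which is absorbed by the arithmetic normalization of the slash operator), so because $f$ is a Fricke eigenform we have $f\wtk u(\alpha_N) = \pm f \cdot(\text{Klingen factor})$, and $\Phi(f\wtk u(\alpha_N)) = 0$ is equivalent to $\Phi(f) = 0$. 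Thus it suffices to prove the single statement $\Phi(f) = 0$.

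Next I would translate $\Phi(f) = 0$ into a statement about the Fourier expansion of $f$. Writing $f(\Omega) = \sum_{t \in \XtwoNsemi} \fc tf\, \e(\ip t\Omega)$ with $t = \smallpindN$, one has $(\Phi f)(\tau) = \sum_{n \ge 0} \big(\sum_{r : r^2 \le 0} \fc tf\big) q^n$ evaluated at $m = 0$, i.e. $\Phi f$ picks out exactly the coefficients indexed by $t = \smallmat n000$ (so $r = 0$, $m = 0$). Therefore $\Phi(f) = 0$ is equivalent to $\fclr{\smallmat n000}{f} = 0$ for all $n \ge 0$; these are precisely the Fourier coefficients supported on the boundary component $\{t \ge 0 : \det t = 0,\ t_{22} = 0\}$ of the closure of $\XtwoNsemi$. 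The hypothesis that $f$ vanishes on $H_N(1,1)$ is the input that will force these coefficients to vanish.

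The heart of the argument is the geometric step: relate vanishing on $H_N(1,1)$ to vanishing of the rank-one boundary Fourier coefficients. Here I would use the description of $H_N(1,1) = H_N(-D, r)$ with $D = 4nmN - r^2 < 0$ recorded after Theorem~\ref{BPthm}: it depends only on the discriminant and on $r \bmod 2N$, and the case $(1,1)$ corresponds to $-D = 1$, $r = 1$, which one can realize with $t_o = \smallmat 0{1/2}{1/2}N$ (giving $4\cdot 0\cdot 1\cdot N - 1 = -1$). The Humbert surface $H_N(t_o)$, being of minimal discriminant $-1$, is the one whose neighborhood in a toroidal or Baily–Borel compactification abuts the rank-one cusp $\smallmat n000$; concretely, the linear form $\ip\Omega{t_o} = z$ (with this choice of $t_o$) degenerates along the Fourier–Jacobi expansion in such a way that vanishing of $f$ on $\{z = 0\}$-type loci kills exactly the $r = 0$ part of each Fourier–Jacobi coefficient, and iterating/using the $\KN^+$-orbit structure then kills the $m = 0$ boundary terms. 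The cleanest implementation is probably: restrict $f$ to the locus cut out by $H_N(1,1)$, observe that the leading Fourier–Jacobi coefficient $\phi_1(f) \in \JkNcusp$ already has all its relevant support controlled, and push this down through $\Phi$. I expect \textbf{this geometric identification of $H_N(1,1)$ with the rank-one boundary} to be the main obstacle: one must pin down precisely why the \emph{smallest} Humbert surface is the one adjacent to the cusp, and why primality of $N$ (so that $\KN^+$ acts transitively enough on the relevant boundary strata) makes "vanishes on $H_N(1,1)$" suffice rather than needing several Humbert surfaces. Everything else — the reduction to one cusp, the Fricke symmetrization, and the Fourier-coefficient reformulation of $\Phi$ — is routine bookkeeping with the definitions already set up in Section~\ref{sectionnotation}.
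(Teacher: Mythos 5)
There is a genuine gap: the implication you yourself flag as ``the main obstacle'' --- that vanishing of $f$ on $H_N(1,1)$ forces the rank-one coefficients $\fc{\smallmat n000}f$ to vanish --- is never proved, and it is the entire content of the lemma. Your sketch of that step is not only incomplete but starts from a wrong identification: for $t_o=\smallmat0{1/2}{1/2}N$ one has $\ip\Omega{t_o}=z+N\omega$, not $z$, so the surface in question is not a ``$z=0$''-type locus, and no appeal to Fourier--Jacobi expansions or to boundary adjacency in a compactification is set up precisely enough to carry the argument. The actual mechanism, in the paper, is elementary and direct: parametrize $\Humhat_N(t_o)=\{\Omega\in\UHPtwo:\ip\Omega{t_o}=0\}$ as $\Omega=\alpha\smallmat\tau00\omega\alpha'=u(\alpha)\langle\smallmat\tau00\omega\rangle$, where the columns of $\alpha\in\SLtwoR$ are isotropic for $t_o$; then ``$f$ vanishes on $\Humhat_N(t_o)$'' literally says $(f\wtk{u(\alpha)})\smallmat\tau00\omega\equiv0$, whence $\Phi(f\wtk{u(\alpha)})=0$. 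Taking the two choices $\alpha=\smallmat{\phantom{-}N}1{-1}0$ and $\smallmat1{-N}0{\phantom{-}1}$ produces matrices $u(\alpha)$ lying in the two Reefschl\"ager double cosets $\KN u(\alpha_m)\PtwooneQ$, $m=1,N$, so vanishing on the single surface kills both cusp conditions at once; no compactification, no Fourier--Jacobi analysis, and in fact no use of the Fricke eigenproperty beyond making the hypothesis well posed.

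Your preliminary reduction also contains bookkeeping errors, although they happen to cancel. In Reefschl\"ager's decomposition $\alpha_1=\smallmat1101$, so $u(\alpha_1)\ne1_4$; and $u(\alpha_N)=u\smallmat1N01$ actually lies in $\KN$, so it represents the \emph{same} double coset as the identity, while it is $\mu_N$ (equivalently $u(\alpha_1)$) that represents the other one --- the opposite of what you assert when you claim $u(\alpha_N)\in\KN\mu_N\PtwooneQ$. The conclusion you draw, that for a Fricke eigenform cuspidality is equivalent to $\Phi(f)=0$, is nevertheless true (the two one-dimensional cusps are represented by $1_4$ and $\mu_N$, and $f\wtk{\mu_N}=\pm f$), but as written your justification would need the coset memberships verified, e.g.\ by the lattice invariant $\{v'Jw:w\in\Z^3\oplus N\Z\}$ attached to the isotropic line $\Q ge_2$. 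Even granting that reduction, you would still have to prove ``vanishing on $H_N(1,1)$ implies $\Phi(f)=0$,'' which is exactly the parametrization step above; so the proposal as it stands does not establish the lemma.
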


\begin{proof}
The Humbert surface $H_N(1,1)$ is $\KN^+\Humhat_N(t_o)$
where $t_o=\smallmat0{1/2}{1/2}N$ and
$\Humhat_N(t_o)=\{\Omega\in\UHPtwo:\ip{\Omega}{t_o}=0\}$.
Define $t[u]=u'tu$ for compatibly sized matrices $t$ and~$u$.
To parametrize $\Humhat_N(t_o)$, choose $\alpha\in\SLtwoR$ having
columns $v_1$ and~$v_2$ such that $t_o[v_1]=t_o[v_2]=0$;
we may take either of $\alpha=\smallmat{\phantom{-}N}1{-1}0,
\smallmat1{-N}0{\phantom{-}1}$.
The parametrization is
\begin{align*}
W_\alpha:\UHP\times\UHP &\lra \Humhat_N(t_o),
\qquad
(\tau,\omega)\longmapsto \tau v_1v_1'+\omega
v_2v_2'=\alpha\smallmat{\tau}00{\omega}\alpha'.
\end{align*}
Recall that $u(\alpha)=\smallmat\alpha00{\alpha^*}$.
The parametrization shows that because $f$ vanishes on
$\Humhat_N(t_o)$, also $f\wtk u(\alpha)\smallmat{\tau}00{\omega}$
vanishes on~$\UHP\times\UHP$ and so clearly $\Phi(f\wtk{u(\alpha)})(\tau)
=\lim_{\lambda\to+\infty}(f\wtk{u(\alpha)})\smallmat{\tau}00{i\lambda}$
is~$0$ for all~$\tau\in\UHP$.

As in section~\ref{sectionParaDefs}, the cusp form condition for~$f$
is that $\Phi(f\wtk g)=0$ for one representative~$g$ of each double coset
$\KN u(\alpha_m)\PtwooneQ$ where $0<m\mid N$ and $\alpha_m=\smallmat1m01$.
Because $N$ is prime here, the only cases are~$m=1,N$,
with representatives $g=u(\alpha)$ for
$\alpha=\smallmat{\phantom{-}N}1{-1}0,\smallmat1{-N}0{\phantom{-}1}$
from the previous paragraph;
the former membership holds because
$\alpha=\smallmat{N+1}{-N}{-1}{\phantom{-}1}
\smallmat1{1}01\smallmat{\phantom{-}1}{0}{-1}1$
and $u$ is a homomorphism,
the latter because $u\smallmat1{2N}01\in\KN$.
The previous paragraph has shown that $\Phi(f\wtk u(\alpha))=0$ for
these two~$\alpha$, so the proof is complete.
\end{proof}

The Fourier coefficients~\cite{yuen22} of $G[1],\dotsc,G[6]$,
and~$\idealf$ show them to span $\CFsthreelev{61}$, so that
$\idealf$ is a nonlift.
Further, the Fourier coefficients show that each element of 
$\CFsthreelev{61}$ is determined by its first and second
Fourier--Jacobi coefficients, so Corollary~\ref{bpecor} gives the
{\em equality\/} of each $B[j]=\BL\left( \psi_j \right)$ and
$G[j]=\Grit\left( \TB_3(D_j) \right)$, and the Borcherds products are
Gritsenko lifts.

\begin{figure}[htb]
$$
\begin{array}{|c||c|c|c|c|}
\hline
\,  & B[1]                   & B[2]  & B[6]      &{\ \idealf=B[1]B[6]/B[2]\ } \\
\hline\hline
H_{61}(1,1)          & 9                        & 9  & 9 & 9 \\
\hline
H_{61}(4,2)        & 3 & 7         & 7     &3            \\
\hline
H_{61}(5,35)        & 7 & 7         & 2     &2            \\
\hline
H_{61}(9,3)        & 4 & 1         & 1     &4            \\
\hline
H_{61}(13,47)        & 3 & 0         & 0     &3            \\
\hline
H_{61}(16,4)        & 0 & 3         & 3     &0            \\
\hline
H_{61}(20,52)        & 4 & 3         & 1     &2            \\
\hline
H_{61}(25,5)        & 1 & 0         & 1     &2            \\
\hline
H_{61}(36,6)        & 0 & 0         & 1     &1            \\
\hline
H_{61}(41,23)        & 0 & 0         & 1     &1            \\
\hline
H_{61}(49,7)        & 1 & 1         & 0     &0            \\
\hline
H_{61}(56,42)        & 1 & 0         & 0     &1            \\
\hline
H_{61}(65,59)        & 0 & 1         & 1     &0            \\
\hline
\end{array}
$$
\caption{Divisors of $B[1],B[2],B[6]$, and~$\idealf$ in
  $\CFsthreelev{61}$\label{table1}}
\end{figure}

With $\CFsthreelev{61}$ spanned, it is a matter of linear algebra to
compute the action of~$\Ttwo$ on the basis, assuming we can compute
enough Fourier coefficients, and then to obtain the nonlift eigenform
$f_{61}=-9G[1]-2G[2]+22G[3]+9G[4]-10G[5]+19G[6]-43{\idealf}$,
a rational function of Gritsenko lifts.
By the product expansion in Theorem~\ref{BPthm}, a Borcherds
product~$\BL(\psi)$ has integral Fourier coefficients when~$\psi$
does, and noting that $\idealf$ is also a Borcherds product because
Borcherds product formation is an additive-to-multiplicative
homomorphism, $f_{61}$ has integral Fourier coefficients.
Also $f_{61}$ has unit content, as shown by the Fourier
coefficients $\fc{\mymat{1}{5}{5}{61}}{f_{61}}=-75$ and
$\fc{\mymat{1}{6}{6}{61}}{f_{61}}=107$.
This completes the new proof of Proposition~\ref{prop61}.
\smallskip

The same argument works for $N=73$ and~$79$;
we only present the key elements.
Here $\dim\CFsthreelev{73}=9$ and the lift space dimension is
$\dim\Jcusp{3}{73}=8$,
and $\dim\CFsthreelev{79}=8$ and the lift space dimension is
$\dim\Jcusp{3}{79}=7$.

\begin{proposition}
\label{prop73}
There is a nonlift Hecke eigenform 
$f_{73} \in \CFsthreelev{73}^{-}(\Z)$ 
with unit content, 
given by a rational function of Gritsenko lifts 
\begin{align*}
f_{73} &= 9 G[1] + 19 G[2] + 2 G[3] - 13 G[4] + 34 G[5] \\
&\quad - 15 G[6] - 12 G[7] - 10 G[8] - 39 \frac{G[2] G[6]}{G[4]}.
\end{align*}
Here $G[j]=\Grit\left(\TB_3(E_j)\right)$ for $j=1,\dotsc,8$ are
Gritsenko lifts of theta blocks, with
\begin{alignat*}2
E_1&=[ 2, 3, 3, 3, 3, 4, 4, 5, 7 ]\qquad
&E_2&=[ 2, 3, 3, 3, 3, 3, 5, 6, 6 ]\\
E_3&=[ 2, 2, 3, 4, 4, 4, 4, 4, 7 ]\qquad
&E_4&=[ 2, 2, 3, 3, 4, 4, 4, 6, 6 ]\\
E_5&=[ 2, 2, 3, 3, 3, 5, 5, 5, 6 ]\qquad
&E_6&=[ 2, 2, 2, 4, 4, 4, 5, 5, 6 ]\\
E_7&=[ 2, 2, 2, 2, 3, 4, 4, 5, 8 ]\qquad
&E_8&=[ 2, 2, 2, 2, 2, 4, 5, 6, 7 ].
\end{alignat*}
The set $\{f_{73},G[1],\dotsc,G[8]\}$ is a basis of~$\CFsthreelev{73}$.
Each Gritsenko lift $G[j]$ is also a Borcherds product
$B[j]=\BL(-(\TB_3(E_j) \wtop{V_2})/\TB_3(E_j))$. 
\end{proposition}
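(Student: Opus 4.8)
The plan is to run the Borcherds-product argument of Proposition~\ref{prop61} verbatim with $N=73$ in place of $N=61$, so I only describe the new data to be verified. First I would confirm via Theorem~\ref{GSZthetablockthm} that each of the eight theta blocks $\TB_3(E_j)$ lies in $\Jcusp{3}{73}$: each $E_j$ has length $\ell=9$, so $12\mid k+\ell$; one checks that the sum of the squares of the entries of each $E_j$ equals $146=2\cdot73$; and the positivity $\frac{3}{12}+\frac12\sum_j\bar B_2(d_j x)>0$ need only be tested at the finitely many points of $[0,1/2]\cap\frac1{146}\Z$. A bounded Fourier-coefficient computation, as in~\cite{yuen22}, then shows these eight forms are linearly independent, hence a basis of the $8$-dimensional space $\Jcusp{3}{73}$, so that the Gritsenko lifts $G[j]=\Grit(\TB_3(E_j))$ form a basis of the lift subspace of $\CFsthreelev{73}$.

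Next I would invoke Corollary~\ref{bpecor} with $k=3$, $N=73$, $\ell=9$: for each $j$, $\psi_j=-(\TB_3(E_j)\wtop{V_2})/\TB_3(E_j)$ lies in $\Jwh{0}{73}(\Z)$, the Borcherds product $B[j]=\BL(\psi_j)$ lies in $\MFsthreelev{73}^{-}$, and $B[j]$ has the same first and second Fourier--Jacobi coefficients as $G[j]$. Computing Fourier coefficients shows that an element of $\CFsthreelev{73}$ is determined by its first two Fourier--Jacobi coefficients, and this forces the equalities $B[j]=G[j]$, so each lift is also a Borcherds product.

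To extract the nonlift I would tabulate the Humbert-surface divisors of $B[2]$, $B[4]$, $B[6]$ from the multiplicity formula (the multiplicity of $\BL(\psi_j)$ along $H_{73}(-D,r_o)$ is $\sum_{i\ge1}\fcJ{i^2n_om_o,ir_o}{\psi_j}$), producing a table analogous to Figure~\ref{table1}. From it one reads that every Humbert multiplicity in the divisor of $\idealf=B[2]B[6]/B[4]$ is $\ge0$, so $\idealf$ is holomorphic, $\idealf\in\MFsthreelev{73}^{-}$, and since its divisor contains $H_{73}(1,1)$ and $73$ is prime, Lemma~\ref{lemmahumbert} gives $\idealf\in\CFsthreelev{73}^{-}$. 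A further Fourier-coefficient check shows that $\{G[1],\dotsc,G[8],\idealf\}$ spans the $9$-dimensional space $\CFsthreelev{73}$, hence $\idealf$ is a nonlift; diagonalizing the matrix of $\Ttwo$ on this basis then yields the stated eigenform $f_{73}$ as the asserted rational function of Gritsenko lifts. Integrality is automatic: each $\psi_j$ has integer coefficients, so each $B[j]$ does by the product expansion in Theorem~\ref{BPthm}; $\idealf$ is itself a Borcherds product because $\BL$ turns sums into products; hence $f_{73}$ has integral Fourier coefficients. Unit content follows by exhibiting two coprime values $\fc{t}{f_{73}}$.

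The main obstacle is the one already resolved in the level-$61$ case: producing theta blocks whose Gritsenko lifts span the lift subspace and among which some quotient $B[i]B[j]/B[\ell]$ has an effective Humbert divisor, i.e.\ the search encoded in the lists $E_1,\dotsc,E_8$. Once those lists are in hand, everything that remains is a finite Fourier-coefficient verification, the effectivity table, and a single linear-algebra diagonalization; the conceptual work lies entirely in the choice of the $E_j$.
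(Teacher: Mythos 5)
Your proposal is correct and follows essentially the same route as the paper, which proves Proposition~\ref{prop73} simply by running the Proposition~\ref{prop61} argument with the data of Figure~\ref{table2} (divisors of $B[2],B[4],B[6]$ and the holomorphic quotient $\idealf=B[2]B[6]/B[4]$) and the unit-content coefficients $\fc{\mymat{1}{13/2}{13/2}{73}}{f_{73}}=7$, $\fc{\mymat{1}{-7}{-7}{73}}{f_{73}}=-6$. You correctly identified the quotient to use, the role of Lemma~\ref{lemmahumbert} with $73$ prime, and the finite Fourier-coefficient verifications, so nothing essential is missing.
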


\begin{proof}
The proof follows the pattern of the proof for $N=61$ in
Proposition~\ref{prop61}.
The Fourier coefficients  $\fc{\mymat{1}{13/2}{13/2}{73}}{f_{73}}=7 $ 
and $\fc{\mymat{1}{-7}{-7}{73}}{f_{73}}=-6 $
prove unit content, and see Figure~\ref{table2} for the 
holomorphy of~$f_{73}$.  
\end{proof}

\begin{figure}[htb]
$$
\begin{array}{|c||c|c|c|c|}
\hline
\,  & B[2]                   & B[4]  & B[6]      &B[2]B[6]/B[4]    \\
\hline\hline
H_{73}(1,1)          & 9                        & 9  & 9 & 9 \\
\hline
H_{73}(4,2)        & 3 & 7         & 7     &3            \\
\hline
H_{73}(8,64)        & 1 & 3         & 3     &1            \\
\hline
H_{73}(9,3)        & 7 & 4         & 1     &4            \\
\hline
H_{73}(12,42)        & 0 & 1         & 1     &0            \\
\hline
H_{73}(16,4)        & 0 & 3         & 3     &0            \\
\hline
H_{73}(24,30)        & 0 & 1         & 3     &2            \\
\hline
H_{73}(25,5)        & 1 & 0         & 2     &3            \\
\hline
H_{73}(36,6)        & 2 & 2         & 1     &1            \\
\hline
H_{73}(37,57)        & 2 & 0         & 0     &2            \\
\hline
H_{73}(48,62)        & 0 & 0         & 1     &1            \\
\hline
H_{73}(65,49)        & 1 & 0         & 0     &1            \\
\hline
H_{73}(72,46)        & 1 & 1         & 0     &0            \\
\hline
H_{73}(73,73)        & 0 & 2         & 2     &0            \\
\hline
\end{array}
$$
\caption{Divisors of $B[2],B[4],B[6]$, and~$B[2]B[6]/B[4]$ in
  $\CFsthreelev{73}$\label{table2}}
\end{figure}

The next proposition gives a simpler expression for~$f_{79}$ than was
given in~\cite{py15}.  This simpler expression makes the congruence
modulo~$32$ visible.

\begin{proposition}
\label{prop79}
There is a nonlift Hecke eigenform 
$f_{79} \in \CFsthreelev{79}^{-}(\Z)$ 
with unit content,
given by a rational function of Gritsenko lifts 
\begin{align*}
f_{79} &= 4 G[1] + 13 G[2] -15 G[3] +8 G[4] + 5 G[6] - 11 G[7]
- 32 \frac{G[2] G[3]}{G[1]}.
\end{align*}
Here $G[j]=\Grit\left(\TB_3(F_j)\right)$ for $j=1,\dotsc,7$ are
Gritsenko lifts of theta blocks, with
\begin{alignat*}2
F_1&=[ 1, 2, 2, 2, 2, 3, 4, 4, 10 ]\qquad
&F_2&=[ 2, 2, 2, 2, 4, 4, 5, 6, 7 ]\\
F_3&=[ 1, 1, 1, 1, 2, 3, 4, 5, 10 ]\qquad
&F_4&=[ 2, 2, 2, 2, 2, 4, 4, 5, 9 ]\\
F_5&=[ 1, 3, 3, 3, 3, 4, 4, 5, 8 ]\qquad
&F_6&=[ 1, 1, 1, 1, 1, 2, 2, 8, 9 ]\\
F_7&=[ 1, 2, 2, 3, 3, 3, 4, 5, 9 ].
\end{alignat*}
The set $\{f_{79},G[1],\dotsc,G[7]\}$ is a basis of~$\CFsthreelev{79}$.
Each Gritsenko lift $G[j]$ is also a Borcherds product
$B[j]=\BL(-(\TB_3(F_j) \wtop{V_2})/\TB_3(F_j))$. 
\end{proposition}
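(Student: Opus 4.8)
The plan is to mimic the proof of Proposition~\ref{prop61} verbatim, with $61$ replaced by $79$ throughout and the theta block data replaced by the $F_j$. First I would invoke Theorem~\ref{GSZthetablockthm}: each $F_j$ has $\ell=9$ entries and $k=3$, so $12\mid k+\ell$; checking $2\cdot79=158=\sum_j d_j^2$ for each $F_j$ together with the finitely many Bernoulli-polynomial positivity conditions on $[0,1/2]\cap\frac1{158}\Z$ shows $\TB_3(F_j)\in\Jcusp{3}{79}$ for $j=1,\dots,7$. Since $\dim\Jcusp{3}{79}=7$ by the Eichler--Zagier dimension formula, computing enough Fourier coefficients (as in~\cite{yuen22}) to see that the $7\times7$ coefficient matrix is nonsingular shows the $\TB_3(F_j)$ form a basis of $\Jcusp{3}{79}$ and the $G[j]=\Grit(\TB_3(F_j))$ a basis of the Gritsenko lift subspace of the $8$-dimensional space $\CFsthreelev{79}$.

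Next I would apply Corollary~\ref{bpecor} with $k=3$, $N=79$, $\ell=9$ (so $k+\ell=12$): each $\psi_j=-(\TB_3(F_j)\wtop{V_2})/\TB_3(F_j)$ lies in $\Jwh{0}{79}(\Z)$, and $B[j]=\BL(\psi_j)\in\MFsthreelev{79}^{-}$ has first and second Fourier--Jacobi coefficients equal to those of $G[j]$. By Theorem~\ref{BPthm}, the multiplicity of $B[j]$ along the Humbert surface $H_{79}(-D,r)$ is $\sum_{i\ge1}\fcJ{i^2 n_o m_o,\,ir_o}{\psi_j}$, computable from finitely many Fourier coefficients of $\psi_j$; I would tabulate these for $B[1],B[2],B[3]$ in a figure analogous to Figures~\ref{table1} and~\ref{table2}. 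The table should exhibit, for every Humbert surface that occurs, that the $B[2]$-multiplicity plus the $B[3]$-multiplicity minus the $B[1]$-multiplicity is $\ge0$, so that $\idealf:=B[2]B[3]/B[1]$ is holomorphic and hence lies in $\MFsthreelev{79}^{-}$; it should also show the $H_{79}(1,1)$-multiplicity of $\idealf$ is positive, so that Lemma~\ref{lemmahumbert} (using that $79$ is prime) gives $\idealf\in\CFsthreelev{79}$.

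I would then check from the Fourier coefficients that $\{G[1],\dots,G[7],\idealf\}$ spans $\CFsthreelev{79}$, so $\idealf$ is a nonlift, and that every element of $\CFsthreelev{79}$ is determined by its first two Fourier--Jacobi coefficients; Corollary~\ref{bpecor} then upgrades the agreement of those two coefficients to the equality $B[j]=G[j]$, so the Borcherds products are the Gritsenko lifts. With a spanning set in hand, linear algebra on the action of $\Ttwo$ produces the nonlift eigenform, which works out to $f_{79}=4G[1]+13G[2]-15G[3]+8G[4]+5G[6]-11G[7]-32\,G[2]G[3]/G[1]$; its Fourier coefficients are integral because each $B[j]$, and hence $\idealf$, is a Borcherds product of a $\psi_j$ with integral Fourier coefficients and Borcherds product formation is additive-to-multiplicative. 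Finally, unit content follows by exhibiting two Fourier coefficients of $f_{79}$ with coprime values, and the simpler expression above displays the factor $32$ that produces the congruence modulo $32$ appearing in the introduction's table.

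The main obstacle I anticipate is purely computational: selecting the correct seven theta blocks $F_j$ so that some product of exactly three of the $B[j]$ — here $B[2]B[3]/B[1]$ — has an everywhere-nonnegative divisor, and then computing enough Fourier coefficients of the weakly holomorphic $\psi_j$ to pin down all the Humbert multiplicities in that divisor. There is no shortcut beyond the finiteness guaranteed by Theorem~\ref{BPthm}; once the divisor table is in place, the remaining steps are the same bookkeeping already carried out for $N=61$ and $N=73$.
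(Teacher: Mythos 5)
Your proposal is correct and follows essentially the same route as the paper, whose proof of Proposition~\ref{prop79} simply says it follows the $N=61$ argument of Proposition~\ref{prop61}, cites the divisor table in Figure~\ref{table3} for the holomorphy of $B[2]B[3]/B[1]$, and exhibits the coprime Fourier coefficients $58$ and $101$ for unit content. Every step you outline (theta blocks via Theorem~\ref{GSZthetablockthm}, Corollary~\ref{bpecor}, Lemma~\ref{lemmahumbert}, determination by the first two Fourier--Jacobi coefficients, and the $\Ttwo$ linear algebra) matches the paper's intended argument.
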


\begin{proof}
Again the proof follows that of Proposition~\ref{prop61}.
The Fourier coefficients  $\fc{\mymat{1}{6}{6}{79}}{f_{79}}=58$ 
and $\fc{\mymat{1}{-7}{-7}{79}}{f_{79}}=101$ 
prove unit content, and see Figure~\ref{table3} for the holomorphy
of~$f_{79}$.
\end{proof}

\begin{figure}[htb]
$$
\begin{array}{|c||c|c|c|c|}
\hline
\,  & B[1]                   & B[2]  & B[3]      &B[2]B[3]/B[1]    \\
\hline\hline
H_{79}(1,1)          & 9                        & 9  & 9 & 9 \\
\hline
H_{79}(4,2)        & 7 & 7         & 3     &3            \\
\hline
H_{79}(5,59)        & 2 & 2         & 2     &2            \\
\hline
H_{79}(8,18)        & 1 & 1         & 1     &1            \\
\hline
H_{79}(9,3)        & 1 & 1         & 1     &1            \\
\hline
H_{79}(13,31)        & 1 & 0         & 1     &0            \\
\hline
H_{79}(16,4)        & 2 & 2         & 1     &1            \\
\hline
H_{79}(20,40)        & 1 & 1         & 0     &0            \\
\hline
H_{79}(21,69)        & 0 & 2         & 0     &2            \\
\hline
H_{79}(25,5)        & 1 & 1         & 2     &2            \\
\hline
H_{79}(36,6)        & 0 & 1         & 0     &1            \\
\hline
H_{79}(40,44)        & 0 & 0         & 1     &1            \\
\hline
H_{79}(44,26)        & 0 & 1         & 0     &1            \\
\hline
H_{79}(45,19)        & 0 & 0         & 1     &1            \\
\hline
H_{79}(49,7)        & 0 & 1         & 0     &1            \\
\hline
H_{79}(65,67)        & 1 & 1         & 0     &0            \\
\hline
H_{79}(76,32)        & 0 & 0         & 1     &1            \\
\hline
H_{79}(80,78)        & 1 & 1         & 0     &0            \\
\hline
H_{79}(100,10)        & 1 & 0         & 1     &0            \\
\hline
\end{array}
$$
\caption{Divisors of $B[1],B[2],B[3]$, and~$B[2]B[3]/B[1]$ in
  $\CFsthreelev{79}$\label{table3}}
\end{figure}

\section{Classification of congruences to Gritsenko
  lifts\label{sectioncongruence}}

A few preliminary results, even if very easy or well known, will be
handy for proving Fourier coefficientwise congruences of eigenforms.

\begin{lemma}\label{lemmaone}
Let $p$ be a rational prime.  Let $K$ be an algebraic number field.
Consider an element~$a$ of~$\OK$ and the $\OK$-ideal $\ideala=\id{p,a}$.
If $p\mid\No(a)$ then $\ideala$ is divisible by a prime ideal
over~$p$, and in particular $\ideala$ is not all of~$\OK$.
If $p\mmid\No(a)$ then $\ideala$ is prime and $\No(\ideala)=p$.
\end{lemma}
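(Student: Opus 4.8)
The plan is to reduce both assertions to the factorization of $p$ in $\OK$ together with the standard formula for the $p$-adic valuation of a norm. Write $p\OK=\idealp_1^{e_1}\cdots\idealp_g^{e_g}$ for the distinct primes over $p$, with residue degrees $f_i$, so that $\No(\idealp_i)=p^{f_i}$, and let $v_i$ denote the $\idealp_i$-adic valuation on $K$. First I would record two elementary facts. One: the ideal $\ideala=\id{p,a}$ is the sum $p\OK+a\OK$, hence the gcd of $p\OK$ and $a\OK$, so that comparing exponents prime by prime yields $\ideala=\prod_{i=1}^g\idealp_i^{\min(e_i,\,v_i(a))}$; no prime outside those over $p$ can occur, since such a prime does not divide $p\OK$. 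Two: from $\No(a\OK)=\prod_\idealq\No(\idealq)^{v_\idealq(a)}$ and $\No(a\OK)=|\No(a)|$ one gets $v_p(\No(a))=\sum_{i=1}^g f_i\,v_i(a)$.

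Granting these, the first assertion is immediate: if $p\mid\No(a)$ then $\sum_i f_i v_i(a)\ge1$, and since each $f_i\ge1$ and each $v_i(a)\ge0$, some index $j$ has $v_j(a)\ge1$ (and if $a=0$ then $\ideala=p\OK$ outright). Then $\min(e_j,v_j(a))\ge1$, so $\idealp_j\mid\ideala$; thus $\ideala$ is divisible by the prime $\idealp_j$ lying over $p$, and in particular $\ideala\subseteq\idealp_j\subsetneq\OK$.

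For the second assertion I would argue that the hypothesis $p\mmid\No(a)$ means $\sum_i f_i v_i(a)=1$, and since the $f_i$ are positive integers and the $v_i(a)$ nonnegative integers, this pins down a unique index $j$ with $f_j=1$ and $v_j(a)=1$ while $v_i(a)=0$ for all $i\ne j$. Feeding this into $\ideala=\prod_i\idealp_i^{\min(e_i,v_i(a))}$ and using $e_j\ge1$ collapses the product to $\ideala=\idealp_j$, a prime ideal, with $\No(\ideala)=\No(\idealp_j)=p^{f_j}=p$.

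I do not expect a genuine obstacle; the lemma is essentially bookkeeping in the Dedekind domain $\OK$. The only points needing a little care are to justify that $\id{p,a}=p\OK+a\OK$ is the gcd ideal, so that the exponentwise formula for $\ideala$ is valid; to invoke $v_p(\No(a))=\sum_i f_i v_i(a)$ correctly (via $\No(\id a)=|\No(a)|$, as fixed in Section~\ref{sectionnotation}); and to dispose of the degenerate case $a=0$ separately in the first part.
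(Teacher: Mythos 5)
Your argument is correct. For the first assertion it is essentially the paper's: in both cases one extracts from $p\mid\No(a)$ a prime $\idealp_j$ over~$p$ dividing $\id a$, and since $\idealp_j$ also divides $p\OK$ it divides the sum $\ideala$. For the second assertion your route is genuinely different. The paper stays at the level of ideal norms: from $p\OK\subseteq\ideala$ and $\id a\subseteq\ideala$ it gets $\No(\ideala)\mid p^{[K:\Q]}$ and $\No(\ideala)\mid|\No(a)|$, so under exact divisibility $\No(\ideala)\mid p$, and the first part rules out $\No(\ideala)=1$, forcing $\No(\ideala)=p$ and hence primality. You instead work with the explicit factorizations, using that $\ideala=p\OK+a\OK$ is the gcd ideal with exponents $\min(e_i,v_i(a))$ and that $v_p(\No(a))=\sum_i f_iv_i(a)$; exact divisibility then forces a unique index $j$ with $f_j=1$, $v_j(a)=1$, collapsing $\ideala$ to $\idealp_j$. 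The paper's version is a bit shorter and needs only that containment implies norm divisibility; yours is slightly more bookkeeping but yields finer information, namely that $\ideala$ is precisely the unique degree-one prime over~$p$ dividing $\id a$ to order exactly one. Your attention to the degenerate case $a=0$ in the first part (where $\ideala=p\OK$) is a reasonable extra precaution that the paper elides. No gaps.
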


\begin{proof}
The condition $p\mid\No(a)$ says that a prime ideal~$\idealp$
over~$p$ divides~$\id a$, and also $\idealp$ divides~$p\OK$,
so it divides their sum~$\ideala$ and we have $\No(\ideala)>1$.
The containments $p\OK\subseteq\ideala$ and $\id a\subseteq\ideala$
give the divisibilities $\No(\ideala)\mid\No(p\OK)=p^{[K:\Q]}$ and
$\No(\ideala)\mid\No(\id a)=|\No(a)|$.
So if $p\mmid\No(a)$ then $\No(\ideala)=p$, making $\ideala$ prime.
\end{proof}

\begin{lemma}
\label{lemmatwo}
Let $k,\ell\in\Zpos$ be coprime.
Let $K$ be an algebraic number field, $\idealuu\subseteq\OK$ an ideal,
and $c\in\OK$ an element.
If $\ell c\in\idealuu$ then $c\in\idealuu+k\OK$.
\end{lemma}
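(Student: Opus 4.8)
The plan is to reduce the statement to the integer B\'ezout identity. Since $k$ and $\ell$ are coprime positive integers, there exist $x,y\in\Z$ with $xk+y\ell=1$. Multiplying through by $c$ gives
$$
c = xkc + y\ell c = (xc)\,k + y\,(\ell c).
$$
I would then read off the two summands: the first, $(xc)k$, lies in $k\OK$ because $xc\in\OK$; the second, $y(\ell c)$, lies in $\idealuu$ because $\ell c\in\idealuu$ by hypothesis, $y\in\Z\subseteq\OK$, and $\idealuu$ is an $\OK$-ideal, hence closed under multiplication by ring elements. Therefore $c\in\idealuu+k\OK$, which is exactly the claim.

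There is no real obstacle here; the only point that warrants a word of care is that the coefficient $y$ produced by B\'ezout is a rational integer, and we are invoking the fact that $\Z\subseteq\OK$ so that $y(\ell c)$ genuinely lands in the ideal $\idealuu$ rather than merely in a $\Z$-span. Everything else is the standard manipulation of the relation $xk+y\ell=1$. I would present the argument in essentially the three lines above, with perhaps one sentence recalling that coprimality of $k,\ell$ is what makes the B\'ezout coefficients exist.
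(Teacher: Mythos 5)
Your argument is correct and is exactly the paper's proof: write $1=xk+y\ell$ by B\'ezout, multiply by $c$, and observe that $y(\ell c)\in\idealuu$ while $(xc)k\in k\OK$. Nothing more is needed.
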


\begin{proof}
There exist $m,n\in\Z$ with $1=m\ell+kn$,
so $c=m\ell c+knc\in\idealuu+k \OK$.  
\end{proof}

The following result is the well known theorem of Kummer--Dedekind,
but we state the particular version that we need.

\begin{lemma}\label{mirrorfactorlemma}
Let $K=\Q(a)$ be an algebraic number field with $a\in\OK$.
Let $\varphi(x)\in\Z[x]$ be the minimal polynomial of~$a$.
Let $p$ be a rational prime.
Suppose that $p^2\nmid\disc(\varphi)$ in~$\Z$;
that is, either $p\nmid\disc(\varphi)$ or $p\mmid\disc(\varphi)$.
Then the factorization of $\varphi$ modulo~$p$ into powers of
irreducible polynomials,
\begin{alignat*}2
\varphi(x)&\equiv\prod_{i=1}^g\varphi_i(x)^{e_i}\bmod p\Z[x],
&&\qquad\deg(\varphi_i)=f_i\text{ for each~$i$},\\
\intertext{gives the factorization of~$p$ in~$\OK$ into powers of
  prime ideals,}
p\OK&=\prod_{i=1}^g\id{p,\varphi_i(a)}^{e_i},
&&\qquad\operatorname{N}(\id{p,\varphi_i(a)})=p^{f_i}\text{ for each~$i$}.
\end{alignat*}
\end{lemma}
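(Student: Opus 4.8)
The claim is the standard Kummer--Dedekind theorem under the hypothesis $p^2\nmid\disc(\varphi)$, which guarantees $p\nmid[\OK:\Z[a]]$ and so removes the usual caveat about non-maximal orders. The plan is to first reduce to the case where $\Z[a]$ is $p$-maximal, then run the elementary Dedekind argument directly.

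\emph{Step 1: $p$ does not divide the index.} Recall the identity $\disc(\varphi)=[\OK:\Z[a]]^2\disc(K)$. Since $p^2\nmid\disc(\varphi)$, the square factor $[\OK:\Z[a]]^2$ cannot absorb a factor of $p^2$, hence $p\nmid[\OK:\Z[a]]$. Consequently the natural map $\Z[a]/p\Z[a]\to\OK/p\OK$ is an isomorphism of $\Fp$-algebras: it is a map of $\Fp$-vector spaces of the same dimension $[K:\Q]$ (using that $\Z[a]$ has finite index, so both quotients have order $p^{[K:\Q]}$), and it is surjective because $\OK=\Z[a]+p\OK$ once $p$ is invertible modulo the index (Nakayama, or an explicit Bezout argument as in Lemma~\ref{lemmatwo}). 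This is the one place the hypothesis is used, and it is the key point; everything afterward is formal.

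\emph{Step 2: identify the quotient ring.} We have $\Fp$-algebra isomorphisms
$$
\OK/p\OK\;\cong\;\Z[a]/p\Z[a]\;\cong\;\Z[x]/(p,\varphi(x))\;\cong\;\Fp[x]/(\bar\varphi(x)),
$$
where $\bar\varphi$ denotes the reduction of $\varphi$ mod $p$. Factor $\bar\varphi=\prod_{i=1}^g\bar\varphi_i^{e_i}$ into powers of distinct monic irreducibles over $\Fp$, with $\deg\bar\varphi_i=f_i$. By the Chinese Remainder Theorem applied to the pairwise-coprime factors $\bar\varphi_i^{e_i}$,
$$
\OK/p\OK\;\cong\;\prod_{i=1}^g \Fp[x]/(\bar\varphi_i^{e_i}).
$$
Each factor $\Fp[x]/(\bar\varphi_i^{e_i})$ is a local Artinian ring whose maximal ideal is generated by the image of $\bar\varphi_i$ and is nilpotent of nilpotency index exactly $e_i$, with residue field $\Fp[x]/(\bar\varphi_i)\cong\F_{p^{f_i}}$.

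\emph{Step 3: translate back to ideals of $\OK$.} The maximal ideals of $\OK/p\OK$ are exactly the preimages of the maximal ideals of the factors, i.e. they are the images of $\idealp_i:=\langle p,\varphi_i(a)\rangle$ for $i=1,\dots,g$ (lifting $\varphi_i\in\Z[x]$ to any integer polynomial reducing to $\bar\varphi_i$). These are therefore precisely the primes of $\OK$ above $p$, they are pairwise distinct, and $\OK/\idealp_i\cong\F_{p^{f_i}}$, giving $\No(\idealp_i)=p^{f_i}$. Finally, comparing the filtration by powers of the maximal ideal in each local factor $\Fp[x]/(\bar\varphi_i^{e_i})$ with the $\idealp_i$-adic filtration shows $\idealp_i^{e_i}\subseteq p\OK$ while $\idealp_i^{e_i-1}\not\subseteq p\OK+\prod_{j\ne i}\idealp_j^{(\cdot)}$; packaging this across all $i$ via CRT yields $p\OK=\prod_{i=1}^g\idealp_i^{e_i}$. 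Since the $\idealp_i$ are the distinct primes over $p$ and unique factorization of ideals holds in the Dedekind domain $\OK$, the exponents are forced, completing the proof.

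\emph{Main obstacle.} The only substantive step is Step~1, deducing $p\nmid[\OK:\Z[a]]$ from $p^2\nmid\disc(\varphi)$ and thereby getting $\OK/p\OK\cong\Z[a]/p\Z[a]$. If one wanted to avoid citing $\disc(\varphi)=[\OK:\Z[a]]^2\disc(K)$, one could instead note that $\disc(\varphi)\in[\OK:\Z[a]]^2\Z$ more elementarily (the change-of-basis matrix from a power basis to an integral basis has determinant $\pm[\OK:\Z[a]]$, and discriminants transform by its square), which suffices. The rest is bookkeeping with CRT and nilpotents in finite-dimensional $\Fp$-algebras.
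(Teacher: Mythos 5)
Your proposal is correct and its decisive step is exactly the paper's: deducing $p\nmid[\OK:\Z[a]]$ from $p^2\nmid\disc(\varphi)$ via $\disc(\varphi)=[\OK:\Z[a]]^2\disc(K)$. The only difference is that the paper then simply cites Theorem~27 of Marcus (Kummer--Dedekind) for the conclusion, whereas you prove that theorem in-line through $\OK/p\OK\cong\Fp[x]/(\bar\varphi)$ and the Chinese Remainder Theorem, which is a fine (if slightly sketchy at the exponent-matching step) expansion of the same argument.
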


\begin{proof}
Because $\disc(\varphi)=\disc(K)\cdot|\OK/\Z[a]|^2$, the hypotheses
give $p\nmid|\OK/\Z[a]|$, and so Theorem~27 of~\cite{Marcus2018}
gives the result.
\end{proof}

The next lemma gives the action of the standard Hecke operators on
Fourier coefficients when the bad prime divides the level at most once.

\begin{lemma}
\label{lemmafour}
Let $f\in\CFskN$ with $k\ge3$ and $N\in\Zpos$.
Let~$p\in\Zpos$ be prime with $p\nmid N$ or $p\mmid N$.
If $p\nmid N$ then let $\HeckeT$ be either of $\Tp,\Topsq$,
and if $p\mmid N$ then let $\HeckeT$ be either of $\Tp,\Tozpsq$.
Then the Fourier coefficients of~$f \wtk{\HeckeT}$ are fixed
$\Z$-linear combinations of the Fourier coefficients of~$f$,
independent of~$f$.
\end{lemma}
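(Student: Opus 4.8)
The plan is to compute the action of each Hecke operator on Fourier expansions explicitly via coset representatives, and check that the resulting formula for the Fourier coefficients of $f\wtk\HeckeT$ involves only $\Z$-linear combinations of the $\fc tf$ with universal (i.e.\ $f$-independent) integer coefficients. The double cosets $\KN g\KN$ for $g=\diag(1,1,p,p)$, $\diag(1,p,p^2,p)$, $\diag(p,p^2,p,1)$ decompose into finitely many right cosets $\KN g_i$, and $f\wtk\HeckeT=\sum_i f\wtk{g_i}$. The slash operator in the arithmetic normalization is $(f\wtk g)(\Omega)=m(g)^{2k-3}\det(c\Omega+d)^{-k}f(g(\Omega))$; since every $g_i$ in our decompositions has the same multiplier $m(g_i)$ (namely $p$ for $\Tp$, $p^2$ for $\Topsq$ and $\Tozpsq$) and can be taken in a form where $c\Omega+d$ is a constant matrix (upper-triangular representatives), each summand $f\wtk{g_i}$ transforms a Fourier series $\sum_t\fc tf\,\e(\ip t\Omega)$ into another Fourier series whose coefficients are obtained from the $\fc tf$ by: rescaling the index $t$ by an explicit integer matrix substitution $t\mapsto u'tu$ coming from the ``$d$-part'' of $g_i$, summing over translation representatives that produce roots-of-unity phases, and multiplying by the fixed scalar $m(g_i)^{2k-3}\det(\cdot)^{-k}$, which is a fixed power of~$p$ and hence an integer since $k\ge3$.

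First I would record an explicit right-coset decomposition for each of the three operators. For $\Tp=\KN\diag(1,1,p,p)\KN$ this is classical (see e.g.\ the tables in \cite{schmidt18} or \cite{py13}); for $\Topsq$ one uses the decomposition already employed in \cite{bpptvy18}; and $\Tozpsq=\mu_N\inv\Topsq\mu_N$, so its coset representatives are conjugates of those for $\Topsq$ by the paramodular Fricke matrix $\mu_N$, which has entries in $\frac1N\Z$ but integral multiplier~$1$, so conjugation preserves the key features. The hypothesis ``$p\nmid N$ or $p\mmid N$'' is exactly what guarantees that these standard coset decompositions are available and that the representatives can be chosen inside $\GSppos4\Q$ with the block structure needed: when $p\mmid N$ the operator $\Topsq$ would mix the cusps, which is why the paper switches to $\Tozpsq$, whereas $\Tp$ behaves uniformly. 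Next, for each representative $g_i$ written in block form $\smallmat{a_i}{b_i}{0}{d_i}$ (after absorbing lower-triangular parts, which is possible for these particular cosets), I would carry out the substitution: $(f\wtk{g_i})(\Omega)=m(g_i)^{2k-3}\det(d_i)^{-k}\sum_t\fc tf\,\e(\ip t{a_i\Omega d_i\inv+b_id_i\inv})$, reindex by $s=d_i^{-1}{}'\,t\,a_i$ (an integer-matrix change of variable because $a_i,d_i$ are integral with determinant a power of~$p$), and collect terms; the $b_id_i\inv$ part contributes a phase $\e(\ip t{b_id_i\inv})$ that is a root of unity depending only on $t$ and $g_i$, and summing over the finitely many $b_i$ in the coset gives either~$0$ or a fixed integer multiplicity.

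**The hard part will be** bookkeeping the index substitutions and showing that the scalar prefactors $m(g_i)^{2k-3}\det(d_i)^{-k}$ are always integers: one must verify that for every coset representative the power of~$p$ in $\det(d_i)$ is at most $\tfrac{2k-3}{k}$ times the power in $m(g_i)$, which holds because $\det(g_i)=m(g_i)^2$ forces $\det(a_i)\det(d_i)=m(g_i)^2$ and the representatives are normalized so that $\det(d_i)\mid m(g_i)^2$, while $2k-3\ge k$ for $k\ge3$ makes the net exponent nonnegative. I would handle this uniformly by noting that the worst case is $d_i=\diag(p,p)$ or $\diag(p^2,p)$, giving $\det(d_i)=p^2$ or $p^3$ against $m(g_i)^{2k-3}=p^{2k-3}$ or $p^{2(2k-3)}$, and $2k-3-2\ge0$, $2(2k-3)-3\ge0$ for $k\ge3$. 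Once integrality of the scalars and integrality of the reindexing matrices $a_i,d_i$ are in hand, the conclusion is immediate: the coefficient of $\e(\ip s\Omega)$ in $f\wtk\HeckeT$ is $\sum_i m(g_i)^{2k-3}\det(d_i)^{-k}\Big(\sum_{b_i}\e(\ip{(d_i{}'s a_i\inv)}{b_id_i\inv})\Big)\fc{d_i{}'s a_i\inv}f$ whenever $d_i{}'s a_i\inv$ is a valid (integral, positive semidefinite) index and $0$ otherwise, and every quantity multiplying $\fc{\cdot}f$ here is a fixed rational integer independent of~$f$. I would then remark that cusp forms are closed under these operators (the index substitutions preserve the strict inequality $4nmN-r^2>0$ up to the scaling by powers of~$p$), so the statement is consistent with $f\wtk\HeckeT\in\CFskN$, though only the $\Z$-linearity claim is actually needed here.
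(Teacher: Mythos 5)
Your outline follows the same route as the paper in broad strokes (right-coset decomposition, slashing by block upper triangular representatives, expanding the Fourier series and summing the phases; for $p\nmid N$ the paper simply cites \cite{bpptvy18}), but the step you single out as the hard part is handled incorrectly. The claim that each scalar prefactor $m(g_i)^{2k-3}\det(d_i)^{-k}$ is an integer is false, and your exponent count treats the factor as $\det(d_i)^{-1}$ rather than $\det(d_i)^{-k}$: for $\Tp$ an entire family of representatives has $d_i=p\,1_2$ and multiplier $p$, so the prefactor is $p^{2k-3}\cdot p^{-2k}=p^{-3}$ for every $k$, and prefactors as small as $p^{k-6}$ occur for $\Tozpsq$. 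Integrality of the Fourier coefficients of $f\wtk{\HeckeT}$ is only recovered \emph{after} the exponential sums over the translation parts are evaluated: full sums contribute powers of $p$ and force divisibility constraints on the index (so the rescaled indices remain integral), while the partial sums give Ramanujan-type values $p-1$ or $-1$. This is exactly what the explicit formulas in the paper's proof record, with surviving coefficients $1$, $p^{k-2}$, $\pm p^{k-3}$, $p^{2k-3}$ for $\Tp$ and $p^{k-3}$, $p^{2k-6}$, $p^{3k-6}$ for $\Tozpsq$; it is here, not in any per-coset estimate, that the hypothesis $k\ge3$ is actually used (note $2k-6\ge0$ precisely when $k\ge3$). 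As written, your argument does not establish the asserted $\Z$-integrality.

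There is a second gap in the case $p\mmid N$: you assert that the representatives can be taken block upper triangular ``after absorbing lower-triangular parts,'' but for the paramodular group this is the genuinely nontrivial point. Schmidt's coset list in \cite{schmidt18} contains a representative that is not block upper triangular, and the paper's proof replaces it by a $\KN$-equivalent upper triangular one via an explicit matrix $S\in\KN$ (this is also why the hypothesis $p\mmid N$, as opposed to $p^2\mid N$, is needed at all). Your proposed shortcut for $\Tozpsq$ --- conjugating $\Topsq$-representatives by $\mu_N$ --- does not avoid this, because the upper triangular decomposition of $\Topsq$ you would conjugate is the one from \cite{bpptvy18}, valid only for $p\nmid N$; at $p\mmid N$ no such decomposition is in hand, so one is back to upper-triangularizing Schmidt's cosets directly. (Two smaller points: $\mu_N$ has entries in $\tfrac1{\sqrt N}\Z$, not $\tfrac1N\Z$, and Schmidt's formulas are in the analytic normalization, so the passage to the arithmetic slash requires the extra factors $p^{k-3}$ and $p^{2k-6}$, which your outline omits.)
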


\begin{proof}
For $p\nmid N$ and $\HeckeT=\Tp,\Topsq$, see \cite{bpptvy18}, pp.1165--1168.

For $p\mmid N$ and $\HeckeT=\Tp,\Tozpsq$, the result follows from the
following formulas, which hold for~$k\ge0$ and clearly have integral
coefficients for~$k\ge3$.
Let $M \equiv (N/p)^{-1} \bmod p$ and let $a,c\in\Z$ be such that $ap-cN/p=1$,
so that $NM/p\equiv1\bmod p$ and $ap^2-cN=p$.
Then for any~$t\in\XtwoN$,
recalling that $t[u]=u'tu$ for compatibly sized matrices $t$ and~$u$,
\begin{align*}
\fc t{f|\Tp}
&=\fc{pt}f
+p^{k-2}
\sum_{x\bmod p}\fc{\tfrac1p\,t\left[\smallmat{\phantom{-}1}0{-x}p\right]}f \\
&\quad+p^{k-2} \sum_{y\bmod p}\fc{\tfrac1p\,t\left[\smallmat{p}{NMy}{0}{1}\right]}f
+p^{2k-3}\fc{\tfrac1p\,t}f \\
&\quad+p^{k-3}
\left\{\begin{alignedat}2
p-1&\quad&&\text{if } p \mid 2 t_{12} \\
-1&\quad&&\text{else} 
\end{alignedat}\right\}
\fc{\tfrac1p\,t\left[\smallmat{ap}{N}{c}{p}\right]}f,
\end{align*}
and
\begin{align*}
\fc t{f|\Tozpsq}
&=p^{k-3} \sum_{x\bmod p}\fc{t\left[\smallmat{1}{0}{-x}{p}\right]}f \\
&\quad+p^{3k-6}\sum_{y\bmod p}\fc{t\left[\smallmat{1}{NMy/p}{0}{1/p}\right]}f \\
&\quad+p^{2k-6}\sum_{y\bmod p} 
\left\{\begin{alignedat}2
p-1&\quad&&\text{if }p\,\bigg|\,
\begin{aligned}
  &2 t_{12}(1 + 2c N/p + 2cy) \\
  &\quad+2t_{22}c/p
\end{aligned} \\
-1&\quad&&\text{else} 
\end{alignedat}\right\} \\
&\qquad\quad\cdot\fc{t\left[\smallmat{(cN+p+cNMy)/p}{N+NMy}{c/p}{1}\right]}f \\
&\quad+p^{2k-6}\sum_{x,y\bmod p}
\left\{\begin{alignedat}2
p-1&\quad&&\text{if $p \mid 2 t_{12}My+t_{22}/N$} \\
-1&\quad&&\text{else} 
\end{alignedat}\right\} \\
&\qquad\quad\cdot\fc{t\left[\smallmat{1+NMxy/p}{NMy}{x/p}{1}\right]}f.
\end{align*}

\newcommand\xhat{{\hat x}}
These formulas are derived from the double coset formulas for $\Tp$
and~$\Tozpsq$ when $p\mmid N$, as given in~\cite{schmidt18}.
The derivation relies on the following result:
with $p,N,M,a,c$ as above, also consider any $x\ne0\bmod p$,
and let $\xhat=x^{-1}\bmod p$, so that $x\xhat=1\bmod p$.
Let $S\in\KN$ be the matrix  
$$
S=\left(
\begin{array}{cccc}
ap   &      N   &      N \xhat/p   &     a \xhat \\
c      &     p   &        \xhat      &       c \xhat/p \\
cNMx/p & NMx  & 1+NMx \xhat/p & c(NMx \xhat/p-1)/p \\
aNMx & N^2Mx/p & N(NMx \xhat/p-1)/p &  a+aNMx \xhat/p\\
\end{array}
\right)
$$
and let $U$ be the blockwise upper triangular matrix
$$
U=\left(
\begin{array}{cccc}
1  &  -N/p    &    -N \xhat/p   &  -\xhat \\
-c/p   &  a   &     -a \xhat   &    -c \xhat/p \\
0    &  0    &     a p      &        c \\
0    &  0    &      N        &       p \\
\end{array}
\right).
$$
Then their product is the $0$-dimensional cusp $C_0(NMx/p)$ as defined
in~\cite{py13} p.449.
$$
SU=\left(
\begin{array}{cccc}
1   &      0   &      0   &    0\\
0    &     1   &      0   &    0\\
0   &   NMx/p    &    1   &    0\\
NMx/p   &    0   &    0   &    1
\end{array}
\right).
$$
From here we can upper triangularize all the matrices in the formulas
from~\cite{schmidt18}, and we get the above two Fourier coefficient
formulas.
The formulas from~\cite{schmidt18} give the analytic, scalar invariant
normalization of the slash operator.  So, beyond upper triangularizing,
the first formula from~\cite{schmidt18} has to be multiplied
by~$p^{k-3}$ and the second by~$p^{2k-6}$ for the arithmetic normalization.
\end{proof}

In the following lemma the ideal~$\ideala$ does not need to be prime.  

\begin{lemma}
\label{lemmathree}
Let $k\ge3$ and $N$ be positive integers.
Let $p\in\Zpos$ be prime with $p\nmid N$ or $p\mmid N$.
If $p\nmid N$ then let $\HeckeT$ be either of $\Tp,\Topsq$,
and if $p\mmid N$ then let $\HeckeT$ be either of $\Tp,\Tozpsq$.
Let $K$ be an algebraic number field.
Consider two $\HeckeT$-eigenforms lying in the same Fricke space, and
having the same Atkin--Lehner sign~$\epsilon_p$ if~$p\mmid N$,
these being
$f\in\CFskN^\pm(\Z)$, having unit content, and $g\in\CFskN^\pm(\OK)$.
Let $\ideala \subseteq \OK$ be an ideal, and assume that $f$ and~$g$
have congruent Fourier coefficients modulo~$\ideala$,
$$
\fc tf \equiv \fc tg \bmod \ideala\quad\text{for all $t\in\XtwoN$}.
$$
Then $f$ and~$g$ have congruent $\HeckeT$-eigenvalues modulo~$\ideala$,
$$
\evhoef\HeckeT f\equiv \evhoef\HeckeT g \bmod \ideala.
$$
If
$p\nmid N$ and $f$ and~$g$ are $\HeckeT$-eigenforms for both
$\HeckeT=\Tp$ and $T=\Topsq$,
or if $p\mmid N$ and $f$ and~$g$ are $\HeckeT$-eigenforms for both
$\HeckeT=\Tp$ and $T=\Tozpsq$,
then $f$ and~$g$ have congruent $p$-Euler polynomials modulo~$\ideala$.
\end{lemma}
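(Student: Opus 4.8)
The plan is to derive the eigenvalue congruence directly from Lemma~\ref{lemmafour} together with the unit-content hypothesis on~$f$, and then to obtain the $p$-Euler polynomial congruence by substituting congruent eigenvalues into the explicit spin Euler factor, whose formula I would record at the start of the proof.

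For the eigenvalue congruence, fix $\HeckeT$ as in the statement. By Lemma~\ref{lemmafour} there are integers $n_{t,s}$, only finitely many nonzero for each~$t$ and independent of the form, such that $\fc t{h\wtk{\HeckeT}}=\sum_s n_{t,s}\,\fc sh$ for every $h\in\CFskN$ and every $t\in\XtwoN$; I apply this to $h=f$ and to $h=g$. Because $f$ and~$g$ are $\HeckeT$-eigenforms, $\fc t{f\wtk{\HeckeT}}=\evhoef\HeckeT f\,\fc tf$ and $\fc t{g\wtk{\HeckeT}}=\evhoef\HeckeT g\,\fc tg$, where $\evhoef\HeckeT f\in\Z$ since $\evhoef\HeckeT f\,\fc tf=\sum_s n_{t,s}\,\fc sf\in\Z$ for all~$t$ and $f$ has unit content, and $\evhoef\HeckeT g\in\OK$ (it lies in~$K$, being a ratio $\fc t{g\wtk{\HeckeT}}/\fc tg$ of elements of~$\OK$, and is an algebraic integer). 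Reducing modulo~$\ideala$ and using $\fc sf\equiv\fc sg$ then gives $\evhoef\HeckeT f\,\fc tf\equiv\evhoef\HeckeT g\,\fc tg\bmod\ideala$ for all~$t$, and replacing $\fc tg$ by $\fc tf$ once more yields $(\evhoef\HeckeT f-\evhoef\HeckeT g)\,\fc tf\equiv0\bmod\ideala$ for every $t\in\XtwoN$. Finally, unit content of~$f$ supplies integers $c_t$, finitely many nonzero, with $\sum_t c_t\,\fc tf=1$; multiplying the previous congruence by~$c_t$ and summing over~$t$ gives $\evhoef\HeckeT f-\evhoef\HeckeT g\equiv0\bmod\ideala$, the first conclusion.

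For the Euler polynomial congruence I would first record the spin polynomial $Q_p(h;x)$ attached to a cusp form~$h$ in a fixed Fricke space that is simultaneously a $\Tp$- and $\Topsq$-eigenform when $p\nmid N$, or a $\Tp$- and $\Tozpsq$-eigenform with Atkin--Lehner sign~$\epsilon_p$ when $p\mmid N$. In either regime the coefficients of $Q_p(h;x)$ are fixed polynomial expressions, with integer coefficients, in the eigenvalues $\evhoef{\Tp}h$ and $\evhoef{\Topsq}h$ (respectively $\evhoef{\Tozpsq}h$ and the sign~$\epsilon_p$) and in the integer powers $p^{k-2}$, $p^{2k-3}$, $p^{4k-6}$. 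Applying the eigenvalue congruence just proved with $\HeckeT=\Tp$ and with $\HeckeT=\Topsq$ (respectively with $\HeckeT=\Tozpsq$) gives $\evhoef{\Tp}f\equiv\evhoef{\Tp}g$ and $\evhoef{\Topsq}f\equiv\evhoef{\Topsq}g$ (respectively the $\Tozpsq$ analogue) modulo~$\ideala$, while $\epsilon_p(f)=\epsilon_p(g)$ when $p\mmid N$ by hypothesis. Substituting these respectively congruent quantities into the same integer-coefficient expressions shows that each coefficient of $Q_p(f;x)-Q_p(g;x)$ lies in~$\ideala$, the second conclusion.

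Once Lemma~\ref{lemmafour} is available the argument is essentially bookkeeping. The two points that need care are the integrality of the $g$-eigenvalues, needed only so the congruences are meaningful and supplied by the standard fact that Hecke eigenvalues of Siegel cusp forms are algebraic integers, and the recording of the explicit spin Euler factor with its manifestly integral dependence on the eigenvalues; this last step, described only structurally above, is the one that actually has to be written out in the proof.
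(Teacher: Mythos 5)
Your argument is correct and is essentially the paper's own proof: the eigenvalue congruence is obtained exactly as in the paper from Lemma~\ref{lemmafour} plus unit content, and the Euler-polynomial congruence follows, as in the paper, by substituting congruent eigenvalues (and the shared sign $\epsilon_p$ in the case $p\mmid N$) into the explicit spin Euler factors, which the paper writes out from \cite{bpptvy18} and \cite{MR2887605}. Your minor variations—deducing $\evhoef\HeckeT f\in\Z$ directly from unit content rather than from the monic integral characteristic polynomial, and citing the algebraic integrality of $\evhoef\HeckeT g$ as a standard fact—are harmless and do not change the structure of the argument.
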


\begin{proof}
By Lemma~\ref{lemmafour}, each
$\fc{t_o}{f\wtk{\HeckeT}}=\evhoef\HeckeT f\fc{t_o}f$ is a
$\Z$-linear combination of the Fourier coefficients~$\fc tf$.
Thus $\evhoef\HeckeT f$ lies in~$\Q$.
Also the characteristic polynomial of~$\HeckeT$ is monic in~$\Z[x]$,
making its roots algebraic integers, so in fact $\evhoef\HeckeT f$
lies in~$\Z$.  Similarly $\evhoef\HeckeT g$ lies in~$\OK$.
For all $t \in \XtwoN$, because $\fc tf$ and~$\fc tg$ are equivalent
modulo~$\ideala$, and because $\fc t{f\wtk{\HeckeT}}$ and
$\fc t{g\wtk{\HeckeT}}$ are the same $\Z$-linear combination of the
$\fc uf$ and the $\fc ug$, they are equivalent modulo~$\ideala$ as
well.  So, because $f$ and~$g$ are eigenforms,
$(\evhoef\HeckeT f-\evhoef\HeckeT g)\fc tf
=\fc t{f\wtk{\HeckeT}}-\fc t{g\wtk{\HeckeT}}$ lies in~$\ideala$.
Because $f$ has unit content some finite $\Z$-linear combination
$\sum_tn_t\fc tf$ is~$1$, and so the eigenvalue difference
$\evhoef\HeckeT f-\evhoef\HeckeT g
=\sum_tn_t(\evhoef\HeckeT f-\evhoef\HeckeT g)\fc tf$
lies in~$\ideala$.
Thus $\evhoef\HeckeT f\equiv\evhoef\HeckeT g\bmod\ideala$.

As for the $p$-Euler polynomials of $f$ and~$g$, for $p\nmid N$
let $f\wtk{\Tp}=\evhoef\Tp ff$ and $f\wtk{\Topsq}=\evhoef\Topsq ff$.
Then the $p$-Euler polynomial is given in~(4.2.16) of~\cite{bpptvy18},
\begin{align*}
Q_p(f,x)&=1-\evhoef\Tp fx+(p\evhoef\Topsq f+p^{2k-5}(1+p^2))x^2\\
&\qquad-p^{2k-3}\evhoef\Tp fx^3+p^{4k-6}x^4,
\end{align*}
and this is determined modulo~$\ideala$ by $\evhoef\Tp f$
and~$\evhoef\Topsq f$ modulo~$\ideala$.
For a prime $p\mmid N$
let $f\wtk{\Tp}=\evhoef\Tp ff$ and $f\wtk{\Tozpsq}=\evhoef\Tozpsq ff$,
and recall that $\epsilon_p$ denotes the shared Atkin--Lehner sign
of $f$ and~$g$.
In this case the $p$-Euler polynomial is, see~\cite{MR2887605}, pg. 547,  
\begin{align*}
Q_p(f,x)&=1-(\evhoef\Tp f+p^{k-3} \epsilon_p)x
+(p\evhoef\Tozpsq f+p^{2k-3})x^2+p^{3k-5}\epsilon_px^3,
\end{align*}
and this is determined modulo~$\ideala$ by $\evhoef\Tp f$,
$\evhoef\Tozpsq f$, and~$\epsilon_p$ modulo~$\ideala$.
We remark that this last case can be proved without the formula for
$\fc t{f\wtop{\Tozpsq}}$ in Lemma~\ref{lemmafour},
because for $N=p$ the bad Euler polynomial only depends upon
$\evhoef\Tp f$ and~~$\epsilon_p$.
Indeed, we have the relation 
$p^{k-3} \evhoef\Tp f\epsilon_p+\evhoef\Tozpsq f+p^{2k-5}+p^{2k-6}=0$.
The reference for this relation for local representations is \cite{robertsschmidt07}, pg. 248.    
\end{proof}

With the needed supporting results in place, we can establish the main
results of this section.

\begin{theorem}\label{cong61}
Let $\GritSp \subseteq \CFsthreelev{61}$ be the subspace of Gritsenko lifts.  
The characteristic polynomial of $\Ttwo$ on~$\GritSp$ is irreducible
over~$\Q$,
$$
q(x)= x^6 -29x^5 +322x^4 -1714x^3 +4471x^2 -5205x +2026.  
$$
Let~$a$ be a root of~$q$ and let $K=\Q(a)$.  
With reference to the elements $d_j(a)$ of~$K$ in Figure~\ref{table4}
and to the Gritsenko lifts $G[j]$ from Proposition~\ref{prop61},
consider an element of~$\GritSp(K)$,
$$
g(a)=\sum_{j=1}^6 d_j(a) G[j].
$$
Then $g(a)$ lies in~$\GritSp(\OK)$,
and it is a $\Ttwo$-eigenform with eigenvalue~$a$,
and it is an eigenform of~$\Tp$ and~$\Tozpsq$ for all primes~$p$.
The $\OK$-ideal
$$
\ideala=\id{43,a+7}
$$
is prime.  The Fourier coefficients and the Euler polynomials of
$f_{61}$ and~$g(a)$ are congruent modulo~$\ideala$.
The ideal~$\ideala$ is the only (proper) $\OK$-ideal that gives a
congruence between the Euler polynomials of $f_{61}$ and~$g(a)$.
\end{theorem}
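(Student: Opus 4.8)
The plan is to reduce the whole statement to finite linear algebra together with a few norm computations over~$\OK$. First I would form the matrix of $\Ttwo$ acting on the basis $G[1],\dots,G[6]$ of $\GritSp$, reading off the needed Fourier coefficients of the Gritsenko lifts (equivalently of the index‑raised theta blocks) as in~\cite{yuen22}; its characteristic polynomial is $q(x)$, and one checks by a standard argument (for instance reduction modulo a suitable prime) that $q$ is irreducible over~$\Q$. Since $\deg q=6=\dim_\Q\GritSp$ and $q$ is irreducible, hence separable, $\Ttwo$ has six distinct eigenvalues and its $a$‑eigenspace in $\GritSp\otimes_\Q K$ is one‑dimensional, spanned by $g(a)=\sum_j d_j(a)G[j]$ with the $d_j(a)$ of Figure~\ref{table4}; inspection of that figure shows each $d_j(a)\in\OK$, so $g(a)\in\GritSp(\OK)\subseteq\CFsthreelev{61}^-(\OK)$, using that a weight~$3$ Gritsenko lift has Fricke eigenvalue $(-1)^3=-1$. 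Because $\Ttwo$ commutes with the other Hecke operators, each of them acts as a scalar on its one‑dimensional $a$‑eigenspace, so $g(a)$ is automatically a simultaneous eigenform, with $\Ttwo$‑eigenvalue~$a$.

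To see that $\ideala=\id{43,a+7}$ is prime, compute $\No(a+7)=q(-7)=2^9\cdot43\cdot101$; since $43\mmid\No(a+7)$, Lemma~\ref{lemmaone}, applied to the element $a+7$ and the prime~$43$, shows that $\ideala$ is prime with $\No(\ideala)=43$, so $\OK/\ideala\cong\F_{43}$ with $a\mapsto-7$. For the congruence of Fourier coefficients I would use the construction of Proposition~\ref{prop61}: writing $f_{61}=\sum_{j}c_jG[j]-43\,\idealf$ with $(c_1,\dots,c_6)=(-9,-2,22,9,-10,19)$ and $\idealf=G[1]G[6]/G[2]\in\CFsthreelev{61}^-(\Z)$, one has for every $t\in\siX2(61)$
$$
\fc t{f_{61}}-\fc t{g(a)}=\sum_{j=1}^6\bigl(c_j-d_j(a)\bigr)\fc t{G[j]}-43\,\fc t{\idealf}.
$$
Here $43\,\fc t{\idealf}\in43\Z\subseteq\ideala$, and $c_j-d_j(a)\in\ideala$ for each~$j$ once one verifies from Figure~\ref{table4} that $c_j\equiv d_j(-7)\bmod43$; since the $\fc t{G[j]}$ are integers, this gives $\fc t{f_{61}}\equiv\fc t{g(a)}\bmod\ideala$ for all~$t$.

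Now Lemma~\ref{lemmathree} applies: $f_{61}$ and $g(a)$ lie in the common Fricke space $\CFsthreelev{61}^-$, $f_{61}$ has unit content by Proposition~\ref{prop61}, both are simultaneous Hecke eigenforms (hence eigenforms of all the operators appearing in that lemma), and at the bad prime $p=61=N$ they share the Atkin--Lehner sign $\epsilon_{61}=-1$, which is the common Fricke sign. Hence $Q_p(f_{61};x)\equiv Q_p(g(a);x)\bmod\ideala$ for every prime~$p$, so $\ideala$ is indeed a congruence ideal for the Euler polynomials.

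For the minimality claim, let $\mathfrak c\subseteq\OK$ be the ideal generated by all coefficients of all the differences $Q_p(f_{61};x)-Q_p(g(a);x)$ as $p$ ranges over primes; an $\OK$‑ideal gives a congruence between the Euler polynomials of $f_{61}$ and $g(a)$ exactly when it divides~$\mathfrak c$, and the previous paragraph shows $\mathfrak c\subseteq\ideala$. To pin $\mathfrak c$ down exactly I would compute the Hecke eigenvalues of $g(a)$ as elements of~$\OK$ (polynomials in~$a$) for a handful of small primes~$p$, assemble the corresponding $Q_p(g(a);x)$ from the Euler‑polynomial formulas in the proof of Lemma~\ref{lemmathree}, and check that among the resulting coefficient differences $\delta_1,\dots,\delta_m\in\OK$ one has $\gcd_i\lvert\No(\delta_i)\rvert=43$. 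Then $\No(\mathfrak c)\mid43$, and since $\mathfrak c\subseteq\ideala\subsetneq\OK$ this forces $\No(\mathfrak c)=43=\No(\ideala)$ and hence $\mathfrak c=\ideala$; as $\ideala$ is prime, the only proper $\OK$‑ideal dividing~$\mathfrak c$ is $\ideala$ itself, which is the assertion. The main obstacle is precisely this final verification: one has to carry out enough of the eigenvalue computation for $g(a)$ over~$\OK$, and enough norm bookkeeping, to be certain that no additional prime of~$\OK$ — in particular none above~$2$ or~$3$, the rational primes dividing $\disc(q)$ — survives in~$\mathfrak c$ alongside the single prime~$\ideala$ above~$43$.
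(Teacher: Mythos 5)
Your proposal follows the paper's argument in all essentials: compute the matrix of $\Ttwo$ on the basis $G[1],\dotsc,G[6]$, use separability of~$q$ and one-dimensionality of eigenspaces to upgrade $g(a)$ to a simultaneous eigenform, deduce the Fourier-coefficient congruence from the explicit expression of~$f_{61}$ in Proposition~\ref{prop61}, invoke Lemma~\ref{lemmathree} for the Euler polynomials, and settle minimality by a gcd of norms of eigenvalue differences (the paper uses exactly $\Ttwo$ and~$\Tthree$, with norms $2^9\cdot43\cdot101$ and $5^2\cdot19\cdot43\cdot139$). Your local variations are legitimate and even slightly cleaner: you get primality of $\ideala$ from $\No(a+7)=q(-7)=2^9\cdot43\cdot101$ and Lemma~\ref{lemmaone}, where the paper factors $q$ modulo~$43$ and uses Lemma~\ref{mirrorfactorlemma}; and you check $c_j\equiv d_j(-7)\bmod 43$ in $\OK/\ideala\cong\F_{43}$ (valid because the denominators of the $d_j$ are powers of~$2$, hence units modulo~$\ideala$), where the paper verifies $1616(c_j-d_j(a))\in\id{a+7}$ and applies Lemma~\ref{lemmatwo}. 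Your closing step (any proper congruence ideal has norm dividing the gcd $43$, while it contains the norm-$43$ ideal data forcing it to equal~$\ideala$) is the same computation the paper performs.

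The one genuine gap is the sentence ``Because $\Ttwo$ commutes with the other Hecke operators.'' That commutation is automatic only for $\Tp$ and $\Tozpsq$ with $p\neq2$; for $\HToz4$ it is not a formal fact, and the theorem does assert that $g(a)$ is an eigenform of $\Tozpsq$ at $p=2$. The paper fills this in by citing Proposition~6.21 of~\cite{robertsschmidt07}: the commutator $[\Ttwo,\HToz4]$ consists of level-lowering operators, hence vanishes on $\CFsthreelev{61}$ because $61$ is prime and $\CFsthreelev{1}=\{0\}$. Without this (or an equivalent) argument your simultaneous-eigenform claim is unsupported at the prime~$2$. Two smaller points: irreducibility of~$q$ is not certified by ``reduction modulo a suitable prime'' unless you exhibit such a prime (modulo $43$, for instance, $q$ factors with degrees $1,1,1,3$), so state how you actually verify it; and in the minimality step you should say explicitly that the eigenvalue differences you feed into the gcd computation lie in any congruence ideal because they occur, up to sign and unit factors, as coefficients of the Euler-polynomial differences via the formulas in the proof of Lemma~\ref{lemmathree} — with that noted, the rest of your norm bookkeeping is exactly what the paper does.
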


\begin{figure}[htb]
$$
\begin{array}{|c||c|c|}
\hline
j\,  & c_j & d_{j\down\!\!\!}(a) \\
\hline\hline
\phantom{\ \ }1\up & -9\phantom{-}\down
& \frac{515}{2}  -\frac{2377}{4} a +443 a^2 -\frac{269}{2} a^3 +17 a^4
-\frac{3}{4} a^5 \\
\hline
\phantom{\ \ }2\up & -2\phantom{-}\down
& \frac{1899}{8}  -\frac{7813}{16} a +\frac{1223}{4} a^2
-\frac{649}{8} a^3 +\frac{39}{4} a^4 -\frac{7}{16} a^5 \\
\hline
\phantom{\ \ }3\up & 22\down 
& -\frac{305}{2}  +\frac{1697}{4} a -359 a^2 +\frac{237}{2} a^3 -16
a^4 +\frac{3}{4} a^5 \\
\hline
\phantom{\ \ }4\up & 9\down 
& -396  +855 a -596 a^2 +174 a^3 -22 a^4 + a^5 \\
\hline
\phantom{\ \ }5\up & -10\phantom{-}\down 
& -140  +215 a -97 a^2 +17 a^3 - a^4 \\
\hline
\phantom{\ \ }6\up & 19\down & -24 \\
\hline
\end{array}
$$
\caption{Coefficients of Gritsenko lifts for $N=61$\label{table4}}
\end{figure}

\begin{proof}
The Gritsenko lift space $\GritSp$ has basis $\{G[j]:j=1,\dotsc,6\}$.
We can compute enough Fourier coefficients of the~$G[j]$ (see~\cite{yuen22})
to give the matrix of~$\Ttwo$ on~$\GritSp$ for the basis, acting
from the left on column vectors of~$\C^6$,
$$
M=\left(\begin{array}{rrrrrr}
 9 & 0 & 4 & -1 & 4 & 1 \\
 -6 & 1 & -4 & -4 & 0 & 0 \\
 3 & 0 & 8 & 2 & 2 & 0 \\
 3 & 4 & 0 & 11 & -4 & -2 \\
 -8 & 0 & -8 & -1 & -3 & -1 \\
 -1 & -4 & 2 & -4 & 0 & 3 \\
\end{array}\right).
$$
The characteristic polynomial~$q(x)=\det(xI-M)$ is as stated.

Each $d_j(a)$ lies in~$\OK$, as can be confirmed by computer
software, and each $G[j]$ lies in $\GritSp(\Z)$, so $g(a)$ lies
in~$\GritSp(\OK)$.
The vector $(d_1(a), \dotsc, d_6(a))$, viewed as a column, lies in
$\operatorname{null}\left(aI-M\right)$, and so $g(a)$ is a
$\Ttwo$-eigenform in~$\GritSp(\OK)$ with eigenvalue~$a$.
The characteristic polynomial of~$\Ttwo$ is separable because its
discriminant $\disc(q)=2^{14}\cdot3^6\cdot1892022169$ is nonzero.
So the eigenspaces of~$\Ttwo$ are one-dimensional, and consequently
$g(a)$ is an eigenform of every Hecke operator that commutes with~$\Ttwo$.
The commutativity is automatic for $\Tp$ and $\Tozpsq$ with~$p\ne2$.
For~$\HToz4$, the commutator~$[\Ttwo,\HToz4]$ consists of level
lowering operators by Proposition~{6.21} in \cite{robertsschmidt07}
and so $[\Ttwo,\HToz4]=0$ on $\CFsthreelev{61}$ because $61$ is prime
and $\CFsthreelev{1}=\{0\}$.
Thus $g(a)$ is an eigenform of all the stated Hecke operators.  

The reduction of~$q(x)$ modulo~$43$ is
$$
q(x)\equiv(x+7)(x+25)(x+30)(x^3+38x^2+13x+12)\bmod43,
$$
and because $43\nmid\disc(q)$,
Lemma~\ref{mirrorfactorlemma} says that
the $\OK$-ideal $\ideala=\id{43,a+7}$ is prime,
and more generally that $\ideala$, $\id{43,a+25}$, and $\id{43,a+30}$ are
the norm-$43$ ideals of~$\OK$.

The values $c_j$ in Figure~\ref{table4} are such that
$f_{61} = 43 {\idealf} + \sum_{j=1}^6 c_j G[j]$ in
Proposition~\ref{prop61}.  Thus
$$
f_{61}-g(a) = 43 {\idealf} + \sum_{j=1}^6 (c_j-d_j(a)) G[j].
$$
For $j=1, \dotsc, 6$ we compute that $1616(c_j-d_j(a))\in\id{a+7}$.  
Lemma~\ref{lemmatwo} with $k=43$, $\ell=1616$, $\idealuu=\id{a+7}$,
and $c=c_j-d_j(a)$ gives $c_j-d_j(a)\in\id{43,a+7}=\ideala$.
The Fourier coefficients of~$43\idealf$ lie in~$\ideala$ as well.
Thus $f_{61} \equiv g(a) \bmod \ideala$ at the level of Fourier coefficients.  
Because $\CFsthreelev{61}=\CFsthreelev{61}^-$ and $61$ is prime,
Lemma~\ref{lemmathree} says that
all $p$-Euler polynomials of $f_{61}$ and~$g(a)$
are congruent modulo~$\ideala$.

The matrix of~$\Tthree$ on~$\GritSp$ is given in~\cite{yuen22}.
Using it along with the matrix of~$\Ttwo$
we compute two eigenvalue differences,
\begin{align*}
  \evhoef\Ttwo{g(a)}-\evhoef\Ttwo{f_{61}}
  &=a+7 \\
  \evhoef\Tthree{g(a)}-\evhoef\Tthree{f_{61}}
  &=\tfrac{1495}{48} - \tfrac{4313}{96} a+
  \tfrac{691}{24}a^2 - \tfrac{349}{48} a^3+ \tfrac{19}{24} a^4 -
  \tfrac{1}{32} a^5 + 3.
\end{align*}
The norm of any ideal containing these eigenvalue differences
divides their norms,
\begin{align*}
\No\left(\evhoef\Ttwo{g(a)} - \evhoef\Ttwo{f_{61}}\right)
&=2^9 \cdot 43 \cdot 101 \\
\No\left(\evhoef\Tthree{g(a)} - \evhoef\Tthree{f_{61}}\right)
&=5^2 \cdot 19 \cdot 43  \cdot 139,
\end{align*}
and therefore divides the greatest common divisor $43$ of these norms,
and therefore, because the ideal is proper, equals~$43$.  So the ideal
is one of the norm-$43$ ideals $\ideala=\id{43,a+7},\id{43,a+25},\id{43,a+30}$,
and it contains~$a+7$, so it is~$\ideala$.
\end{proof}

The elements $d_j(a)$ of~$\OK$ in Figure~\ref{table4} were determined as follows.
Select some $v\in K^6\cap\ker(M-aI)$.
Find the minimal $\ell\in\Zpos$ such that $\ell v\in\OK^6$.
Search for $b\in\Z$ such that $43\mid\No(c_j-b\ell v_j)$ for $j=1,\dotsc,6$,
with $c_j$ from Figure~\ref{table4}.  Then $b\ell v_j$ has the
properties that we want, and as an element of~$\OK$ it takes the form
$d_j(a)$.

The congruence of eigenvalues modulo a prime ideal above~$43$ is also
proven in~\cite{dummiganprt2021}, which also contains a proof of a
congruence above~$19$, discovered by Buzzard and Golyshev, of~$f_{61}$
to a Yoshida lift.


\begin{theorem}\label{cong73}
Let $\GritSp \subseteq \CFsthreelev{73}$ be the subspace of Gritsenko lifts.  
The characteristic polynomial~$q$ of $\Ttwo$ on~$\GritSp$ factors
over~$\Q$ as $q=q_1q_7$, where its irreducible factors $q_1$ and~$q_7$
are
\begin{align*}
q_1(x)  &= x-9,  \\
q_7(x)  &= x^7 -30 x^6 + 357 x^5 -2157 x^4 +7034 x^3 -12145 x^2 + 9964 x -2832.  
\end{align*}

With reference to the integers $d_{1,j}$ in Figure~\ref{table5}
and to the Gritsenko lifts $G[j]$ from Proposition~\ref{prop73},
consider an element of~$\GritSp(\Z)$,
$$
g_1= \sum_{j=1}^8 d_{1,j} G[j].
$$
Then $g_1$ is a $\Ttwo$-eigenform with eigenvalue~$9$,
and it is an eigenform of~$\Tp$ and~$\Tozpsq$ for all primes~$p$.
The Fourier coefficients and the Euler polynomials of $f_{73}$ and~$g_1$ 
are congruent modulo~$3\Z$.  The only (proper) $\Z$-ideal that gives a
congruence between the Euler polynomials of $f_{73}$ and~$g_1$ is~$3\Z$.

Let~$a$ be a root of~$q_7$ and~$K=\Q(a)$.  
With reference to the elements $d_{7,j}(a)$ of~$K$ in Figure~\ref{table5}
and to the Gritsenko lifts $G[j]$ from Proposition~\ref{prop73},
consider an element of~$\GritSp(K)$,
$$
g_7(a)= \sum_{j=1}^8 d_{7,j}(a) G[j].
$$
Then $g_7(a)$ lies in~$\GritSp(\OK)$,
and it is a $\Ttwo$-eigenform with eigenvalue~$a$,
and it is an eigenform of~$\Tp$ and~$\Tozpsq$ for all primes~$p$.
The $\OK$-ideals
$$
\ideala=\id{3,a},\qquad\idealb=\id{13,a+6}
$$
are prime.  The Fourier coefficients and the Euler polynomials of
$f_{73}$ and~$g_7(a)$ are congruent modulo~$\ideala$ and modulo~$\idealb$.
The only prime-power $\OK$-ideals that give congruences between the Euler 
polynomials of $f_{73}$ and~$g_7(a)$ are $\ideala$ and~$\idealb$.  
\end{theorem}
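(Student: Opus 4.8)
The plan is to follow the proof of Theorem~\ref{cong61}, now carrying two Hecke eigenvalue components and two congruence primes.

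First I would compute enough Fourier coefficients of $G[1],\dots,G[8]$ (as in~\cite{yuen22}) to write down the matrix $M$ of $\Ttwo$ on~$\GritSp$ in the basis $\{G[j]\}$, verify that $\det(xI-M)$ factors as $q_1q_7$, and check that $q_7$ is irreducible over~$\Q$ (e.g.\ by a reduction modulo a small prime) and that $9$ is not a root of~$q_7$, so that $q=q_1q_7$ is separable and the $\Ttwo$-eigenspaces on~$\GritSp$ are one-dimensional. Scaling a generator of $\ker(M-9I)$ to a primitive integer vector yields the integers $d_{1,j}$, making $g_1=\sum_j d_{1,j}G[j]\in\GritSp(\Z)$ a $\Ttwo$-eigenform with eigenvalue~$9$; clearing denominators in a $K$-basis vector of $\ker(M-aI)$, where $K=\Q(a)$ and $q_7(a)=0$, yields the $d_{7,j}(a)$, and a computation (confirmed by software) shows $d_{7,j}(a)\in\OK$, so $g_7(a)=\sum_j d_{7,j}(a)G[j]\in\GritSp(\OK)$ is a $\Ttwo$-eigenform with eigenvalue~$a$. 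One-dimensionality of the eigenspaces makes each of $g_1,g_7(a)$ an eigenform of every Hecke operator commuting with~$\Ttwo$; this is automatic for $\Tp$ and $\Tozpsq$ when $p\ne2$, and for $\HToz4$ one argues exactly as in Theorem~\ref{cong61}: $[\Ttwo,\HToz4]$ is a combination of level-lowering operators by Proposition~6.21 of~\cite{robertsschmidt07}, hence vanishes on $\CFsthreelev{73}$ since $73$ is prime and $\CFsthreelev{1}=\{0\}$.

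Next I would establish the congruences. Computing $\No(a)=-q_7(0)=2832=2^4\cdot3\cdot59$ and $\No(a+6)=-q_7(-6)=2^2\cdot3\cdot5^2\cdot13\cdot2377$, Lemma~\ref{lemmaone} (using $3\mmid\No(a)$ and $13\mmid\No(a+6)$) shows $\ideala=\id{3,a}$ and $\idealb=\id{13,a+6}$ are prime of norms $3$ and~$13$. From Proposition~\ref{prop73}, write $f_{73}=39\cdot(\text{integral-coefficient Borcherds-product quotient})+\sum_j c_jG[j]$ with $(c_1,\dots,c_8)=(9,19,2,-13,34,-15,-12,-10)$; then $f_{73}-g_1$ and $f_{73}-g_7(a)$ each equal a form with all Fourier coefficients in $39\Z$ plus $\sum_j(c_j-d_{1,j})G[j]$, respectively $\sum_j(c_j-d_{7,j}(a))G[j]$, while $39\Z\subseteq3\Z$ and $39\OK\subseteq\ideala\cap\idealb$. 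Reading the integers $d_{1,j}$ from Figure~\ref{table5} one checks $c_j\equiv d_{1,j}\bmod 3$ for every~$j$, giving $f_{73}\equiv g_1\bmod 3\Z$ on Fourier coefficients; for $g_7(a)$, as in the proof of Theorem~\ref{cong61}, one exhibits $\ell\in\Zpos$ prime to~$3$ with $\ell(c_j-d_{7,j}(a))\in\id a$ for all~$j$ and $\ell'\in\Zpos$ prime to~$13$ with $\ell'(c_j-d_{7,j}(a))\in\id{a+6}$ for all~$j$, so that Lemma~\ref{lemmatwo} gives $c_j-d_{7,j}(a)\in\ideala$ and $\in\idealb$, hence $f_{73}\equiv g_7(a)$ modulo $\ideala$ and modulo~$\idealb$ on Fourier coefficients. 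Since (as for $N=61$) $\CFsthreelev{73}=\CFsthreelev{73}^-$ and $73$ is prime, all forms involved share Atkin--Lehner sign $\epsilon_{73}=-1$, so Lemma~\ref{lemmathree} (using that a Fricke eigenform is a $\Topsq$-eigenform if and only if it is a $\Tozpsq$-eigenform, with the same eigenvalue) upgrades each coefficientwise congruence to a congruence of all $p$-Euler polynomials.

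For the minimality claims I would argue as in Theorem~\ref{cong61}. If a proper $\OK$-ideal makes the Euler polynomials of $f_{73}$ and $g_7(a)$ congruent, then comparing the linear coefficients of $Q_2$ and $Q_3$ (using the formulas recalled in Lemma~\ref{lemmathree}) forces that ideal to contain $\evhoef{\Ttwo}{g_7(a)}-\evhoef{\Ttwo}{f_{73}}$ and $\evhoef{\Tthree}{g_7(a)}-\evhoef{\Tthree}{f_{73}}$; with the matrix of $\Tthree$ on~$\GritSp$ from~\cite{yuen22} and $\evhoef{\Ttwo}{f_{73}}=-6$, $\evhoef{\Tthree}{f_{73}}=-2$, one finds the first difference equals $a+6$ and computes the second as an explicit element of~$\OK$, and the greatest common divisor of their norms (with $\No(a+6)=2^2\cdot3\cdot5^2\cdot13\cdot2377$) works out to~$39$. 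Thus such an ideal, being a prime power, has norm a prime-power divisor of~$39$, hence norm $3$ or~$13$, so it is a degree-one prime above $3$ or~$13$ containing $a+6$; in the norm-$13$ case it contains $\id{13,a+6}=\idealb$ and equals~$\idealb$, and in the norm-$3$ case it contains~$3$, hence~$6$, hence~$a$, so it contains $\id{3,a}=\ideala$ and equals~$\ideala$. The analogous computation over~$\Z$, using $\evhoef{\Ttwo}{g_1}-\evhoef{\Ttwo}{f_{73}}=15$ together with a second eigenvalue difference, shows $3\Z$ is the only proper $\Z$-ideal giving a congruence between the Euler polynomials of $f_{73}$ and~$g_1$. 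I expect the only real obstacle to be computational: producing the $8\times8$ Hecke matrices of $\Ttwo$ and $\Tthree$, proving irreducibility of~$q_7$, verifying $d_{7,j}(a)\in\OK$, and --- most delicately --- choosing enough Hecke operators (a third such as $\Tfive$ may be needed beyond $\Ttwo,\Tthree$) so that the gcd of the eigenvalue-difference norms is exactly~$39$, since a stray common prime factor would break the minimality conclusion.
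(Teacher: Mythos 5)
Your route is essentially the paper's: the same Hecke matrix $M$ for $\Ttwo$, the same eigenvector constructions of $g_1$ and $g_7(a)$, Lemma~\ref{lemmatwo} to promote a divisibility after clearing an integer prime to the congruence modulus, Lemma~\ref{lemmathree} to pass from coefficientwise congruences to Euler polynomials, and eigenvalue-difference norms for minimality. Two of your variations are harmless: you get primality of $\ideala$ and $\idealb$ from Lemma~\ref{lemmaone} via $3\mmid\No(a)=2832$ and $13\mmid\No(a+6)$, where the paper invokes Kummer--Dedekind (Lemma~\ref{mirrorfactorlemma}) and the single factors of $3$ and $13$ in $\disc(q_7)$; and you propose dividing $c_j-d_{7,j}(a)$ directly by $a$ and by $a+6$, whereas the paper divides the shifted quantities $c_j+2d_{7,j}(a)$ and $c_j+12d_{7,j}(a)$ by an auxiliary element $w$ and by $a+6$ --- both come to the same thing through Lemma~\ref{lemmatwo}, since the shifts differ by multiples of $3$ and $13$.

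The genuine gap is in the minimality step for $g_7(a)$. You assert that the gcd of $\No(\evhoef\Ttwo{g_7(a)}-\evhoef\Ttwo{f_{73}})=\No(a+6)=2^2\cdot3\cdot5^2\cdot13\cdot2377$ and the corresponding $\Tthree$-norm ``works out to $39$''; in fact the $\Tthree$-difference has norm $2^2\cdot3\cdot13\cdot195809$, so the gcd is $156=2^2\cdot3\cdot13$, and your argument as written does not exclude a congruence modulo a prime ideal of norm $2$ or $4$ above $2$. The paper closes exactly this hole not with a third Hecke operator but by also computing the norm of the \emph{sum} of the two eigenvalue differences, which is $3^6\cdot13\cdot61\cdot4793$, an odd number; any congruence ideal contains that sum as well, so its norm divides the gcd of all three norms, namely $39$, and then your norm-$3$/norm-$13$ identification of $\ideala$ and $\idealb$ goes through verbatim. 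Your fallback of adjoining $\Tfive$ might also kill the factor $2^2$, but it is unverified (note that in the $N=79$ case the analogous $\Tfive$-difference norm, $2^6\cdot5\cdot7^2\cdot67$, is even, so oddness is not automatic) and it costs another $8\times8$ Hecke matrix, whereas the sum trick needs no new data. The $g_1$ part of your minimality argument, using $\gcd(15,6)=3$, matches the paper and is fine.
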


\newcommand\mypushup{^{\phantom{'}}}
\newcommand\mypushdn{_{\phantom{'}}}

\begin{figure}[htb]
$$
\begin{array}{|c||r|r|c|}
\hline
j\, & c_j & d_{1,j} & d_{7,j\down\!\!\!}(a)\up \\
\hline\hline
1   & 9   & -3                
& \begin{aligned}
    & -\tfrac{632307\mypushup}{4} + \tfrac{8480021}{16}a
    - \tfrac{36446543}{64}a^2 + \tfrac{16486531}{64}a^3 \\
    & \quad - \tfrac{442583}{8\mypushdn}a^4
    + \tfrac{357943}{64}a^5 - \tfrac{13671}{64}a^6
  \end{aligned} \\
\hline
2   & 19   & -2
& \begin{aligned}
    & -\tfrac{247387\mypushup}{2} + \tfrac{2906781}{8}a
    - \tfrac{9639367}{32}a^2 + \tfrac{3516515}{32}a^3 \\
    & \quad - \tfrac{77947}{4\mypushdn}a^4 + \tfrac{52719}{32}a^5
    - \tfrac{1703}{32}a^6
  \end{aligned} \\
\hline
3   & 2   & -1
& \begin{aligned}
    & \tfrac{134939\mypushup}{4} - \tfrac{1626701}{16}a
    + \tfrac{6689239}{64}a^2 - \tfrac{2996715}{64}a^3 \\
    & \quad + \tfrac{80271}{8\mypushdn}a^4 - \tfrac{64895}{64}a^5
    + \tfrac{2479}{64}a^6
  \end{aligned} \\    
\hline
4   & -13  & 2
& \begin{aligned}
    & -\tfrac{280607\mypushup}{2} + \tfrac{3673513}{8}a
    - \tfrac{15573571}{32}a^2 + \tfrac{6924839}{32}a^3 \\
    & \quad - \tfrac{181543}{4\mypushdn}a^4 + \tfrac{142859}{32}a^5
    - \tfrac{5307}{32}a^6
  \end{aligned} \\    
\hline
5   & 34   & 1
& \begin{aligned}
    & \tfrac{411139\mypushup}{4} - \tfrac{4124773}{16}a
    + \tfrac{16534783}{64}a^2 - \tfrac{7232243}{64}a^3 \\
    & \quad + \tfrac{187815}{8\mypushdn}a^4 - \tfrac{146535}{64}a^5
    + \tfrac{5399}{64}a^6    
  \end{aligned} \\    
\hline
6   & -15  & 3
& \begin{aligned}
    &  \tfrac{850479\mypushup}{4} - \tfrac{12504265}{16}a
    + \tfrac{51374171}{64}a^2 - \tfrac{22172735}{64}a^3 \\
    & \quad + \tfrac{569099}{8\mypushdn}a^4 - \tfrac{441443}{64}a^5
    + \tfrac{16243}{64}a^6    
  \end{aligned} \\    
\hline
7   & -12 & 3
& \begin{aligned}
    &  \tfrac{648207\mypushup}{4} - \tfrac{8826201}{16}a
    + \tfrac{37654939}{64}a^2 - \tfrac{16649519}{64}a^3 \\
    & \quad + \tfrac{434027}{8\mypushdn}a^4 - \tfrac{340515}{64}a^5
    + \tfrac{12643}{64}a^6    
  \end{aligned} \\    
\hline
8   & -10  & 2  &   -10072 \\
\hline
\end{array}
$$
\caption{Coefficients of Gritsenko lifts for $N=73$\label{table5}}
\end{figure}

\begin{proof}
Similarly to the proof of Theorem~\ref{cong61},
the matrix~$M$ of $\Ttwo$ on~$\GritSp$
for the basis $\{G[j]:j=1,\dotsc,8\}$ is
$$
M=
\left(
\begin{array}{rrrrrrrr}
 8 & 0 & 3 & -4 & -2 & -2 & 4 & 2 \\
 -4 & 1 & -1 & -2 & -3 & 0 & -4 & -5 \\
 0 & 0 & 6 & 0 & -1 & 0 & 0 & -1 \\
 2 & 2 & 5 & 8 & 3 & 2 & 2 & 1 \\
 2 & 4 & -1 & 6 & 8 & 8 & -4 & -5 \\
 -4 & -4 & -8 & -4 & -1 & -4 & 2 & 7 \\
 -2 & 0 & -3 & 4 & 3 & 2 & 1 & -1 \\
 -3 & -4 & -7 & -6 & -4 & -8 & 4 & 11 \\
\end{array}
\right).
$$
The characteristic polynomial~$q(x)=\det(xI-M)=q_1(x)q_7(x)$ is as
stated.

The vector $(d_{1,1},\dotsc,d_{1,8})$ lies in $\operatorname{null}(9I-M)$,
and so $g_1$ is a $\Ttwo$-eigenform with eigenvalue~$9$.   
The characteristic polynomial~$q$ of~$\Ttwo$ is separable because its
discriminant
$\disc(q)=2^{36}\cdot3^3\cdot5^2\cdot13\cdot19^2\cdot37\cdot101\cdot30931$
is nonzero,
and so $g_1$ is an eigenform of all the stated Hecke operators
as in the proof of Theorem~\ref{cong61}.

The values~$c_j$ in Figure~\ref{table5} are such that
$f_{73} = -39 {\idealf} + \sum_{j=1}^8 c_j G[j]$ in
Proposition~\ref{prop73}.  Thus
$$
f_{73}-g_1 = -39 {\idealf} + \sum_{j=1}^8 (c_j-d_{1,j}) G[j].
$$
Figure~\ref{table5} shows that the coefficients $c_j-d_{1,j}$ all lie in~$3\Z$,
as do the Fourier coefficients of~$-39\idealf$,
and so the Fourier coefficients of~$g_1$ and $f_{73}$ are congruent
modulo~$3\Z$.
Because $\CFsthreelev{73}=\CFsthreelev{73}^-$ and $73$ is prime,
Lemma~\ref{lemmathree} says that
all $p$-Euler polynomials of $f_{73}$ and~$g_1$
are congruent modulo~$3\Z$.
Again the matrix of~$\Tthree$ on~$\GritSp$ is given in~\cite{yuen22}.
The eigenvalue differences $\evhoef\Tp{g_1}-\evhoef\Tp{f_{73}}$ for
$p=2,3$ are
\begin{align*}
 \evhoef\Ttwo{g_1} - \evhoef\Ttwo{f_{73}} &= 9+6 = 15 = 3\cdot 5, \\
 \evhoef\Tthree{g_1} - \evhoef\Tthree{f_{73}} &= 4+2 = 6 = 2\cdot 3.
\end{align*}
Any ideal containing these contains~$3\Z$, so it is~$3\Z$.

Now let~$a$ be a root of~$q_7$ and $K=\Q(a)$.    
Each $d_{7,j}(a)$ lies in~$\OK$
and each $G[j]$ lies in $\GritSp(\Z)$,
so $g_7(a)$ lies in~$\GritSp(\OK)$.
The vector $(d_{7,1}(a),\dotsc,d_{7,8}(a))$
lies in $\operatorname{null}(aI-M)$, and so $g_7(a)$ is a
$\Ttwo$-eigenform in~$\GritSp(\OK)$ with eigenvalue~$a$.
Because the roots of~$q_7$ are distinct, 
$g_7(a)$ is an eigenform of all the stated Hecke operators.
%


Because
$\operatorname{disc}(q_7)=2^{24}\cdot3\cdot13\cdot19^2\cdot37\cdot101\cdot30931$
is divisible by~$3$ only once, Lemma~\ref{mirrorfactorlemma} says that
the factorization of $q_7$ modulo~$3$,
$$
q_7(x)\equiv x(x+2)^2(x^4+2x^3+x+1)\bmod3,
$$
determines the factorization $3\OK=\id{3,a}\id{3,a+2}^2\id{3,a^4+2a^3+a+1}$,
the first two prime ideals on the right side having norm~$3$ and the
third having norm~$3^4$.
Similarly the factorization of~$q_7$ modulo~$13$,
$$
q_7(x)\equiv(x+6)^2(x^5+10x^4+6x^3+11x^2+4x+8)\bmod{13},
$$
determines the factorization
$13\OK=\id{13,a+6}^2\id{13,a^5+10a^4+6a^3+11a^2+4a+8}$.
Here $\id{3,a}$ and $\id{13,a+6}$ are the ideals~$\ideala$
and~$\idealb$ of the theorem, so those ideals are prime as claimed.

The values $c_j$ in Figure~\ref{table5} are such that
$f_{73} = -39 {\idealf} + \sum_{j=1}^8 c_j G[j]$
in Proposition~\ref{prop73}.  Thus
$$
f_{73}-g_7(a) = -39 {\idealf} + \sum_{j=1}^8 (c_j-d_{7,j}(a)) G[j].
$$
Let
$$
w=\tfrac{134943}{2} - \tfrac{1626701}{8}a + \tfrac{6689239}{32}a^2 
- \tfrac{2996715}{32}a^3 + 
 \tfrac{80271}{4}a^4 - \tfrac{64895}{32}a^5 + \tfrac{2479}{32}a^6. 
$$  
This element of~$\OK$ has norm
\begin{align*}
\No(w)
&=-2^7 \cdot 3 \cdot 3919 \cdot 1941571 \cdot 8583739212883,
\end{align*} 
and so by Lemma~\ref{lemmaone}, the ideal $\id{3,w}$ is one of the
norm-$3$ $\OK$-ideals, $\id{3,w}=\ideala=\id{3,a}$ or $\id{3,w}=\id{3,a+2}$.
Further, $32w$ does not lie in~$\id{3,a+2}$, as one can see
by replacing $a$ by~$1$ in~$32w$ and then reducing modulo~$3$.
Therefore $\id{3,w}=\ideala$.
Now set  $\ell=130627630879749647154734$.
For $j=1,\dotsc,8$, compute $\ell(c_j+2d_{7,j}(a))\in\id w$.
Lemma~\ref{lemmatwo} with $k=3$, $\ell$ as given, $\idealuu=\id w$,
and $c=c_j+2d_{7,j}(a)$ gives $c_j+2d_{7,j}(a)\in\id{3,w}=\ideala$,
and it follows that $c_j-d_{7,j}(a)\in\ideala$.
The Fourier coefficients of~$-39\idealf$ lie in~$\ideala$ as well.
Thus $f_{73}\equiv g_7(a)\bmod\ideala$ at the level of Fourier coefficients.
Turning to the $\idealb=\id{13,a+6}$ congruence, we evaluate the norm 
\begin{align*}
\No(a+6) =9270300  
=2^2 \cdot 3 \cdot 5^2 \cdot 13 \cdot 2377.  
\end{align*} 
Set $\ell= 356550$.
For $j=1,\dotsc,8$, compute $\ell(c_j+12d_{7,j}(a))\in\id{a+6}$.  
Lemma~\ref{lemmatwo} with $k=13$, $\ell$ as given, $\idealuu=\id{a+6}$,
and $c=c_j+12d_{7,j}(a)$ gives $c_j+12d_{7,j}(a)\in\id{13,a+6}=\idealb$.
Thus, similarly to just above,
$f_{73} \equiv g_7(a) \bmod \idealb$ at the level of Fourier coefficients.
As in the proof of Theorem~\ref{cong61}, this proves that all
$p$-Euler polynomials of $f_{73}$ and~$g_7(a)$ are congruent
modulo~$\ideala$ and modulo~$\idealb$.

The norms of possible prime-power congruence ideals are limited to
$3$ or~$13$ by computing the norms from $\OK$ to~$\Z$ of two
eigenvalue differences and of their sum,
\begin{align*}
&\No\left( \evhoef\Ttwo{g_7(a)} - \evhoef\Ttwo{f_{73}}\right)   
=\No\left(a+6\right)= 2^2 \cdot 3 \cdot 5^2 \cdot 13 \cdot 2377, \\
&\No\left( \evhoef\Tthree{g_7(a)} - \evhoef\Tthree{f_{73}} \right) \\
&\quad = \No\left(  \tfrac{241}{4} - \tfrac{3171}{16} a + \tfrac{13965}{64} a^2 -
\tfrac{6297}{64} a^3 + \tfrac{167}{8} a^4 - \tfrac{133}{64} a^5
+\tfrac{5}{64} a^6 + 2 \right) \\
&\quad = 2^2 \cdot 3 \cdot 13 \cdot 195809, \\
&\No\left( \evhoef\Ttwo{g_7(a)} - \evhoef\Ttwo{f_{73}}
  +\evhoef\Tthree{g_7(a)} - \evhoef\Tthree{f_{73}} \right) \\
&\quad = 3^6 \cdot 13 \cdot 61 \cdot 4793,
\end{align*}
because these norms have greatest common divisor $3\cdot13$.
If the congruence ideal has norm~$3$ then because it contains the
eigenvalue difference $a+6$ it contains~$\id{3,a+6}=\id{3,a}=\ideala$,
so it is~$\ideala$.  Similarly, if the congruence ideal has norm~$13$
then it is~$\id{13,a+6}=\idealb$.
\end{proof}

The integers $d_{1,j}$ and the polynomials $d_{7,j}$
in Figure~\ref{table5} were determined similarly to the polynomials
$d_j$ in Figure~\ref{table4}.
The value $w$ in the proof was found by testing the set
$\{c_j+nd_{7,j}(a)\}$ for various integers~$n$ until one element of
the set divided all the others in~$\OK$; $w$ came from $n=2$ and then $j=3$.

We remark that the Gritsenko lift with $\Ttwo$-eigenvalue given
by~$q_1$ arises from the elliptic newform 73.4.a.a at the database
lmfdb, and similarly for~$q_7$ and 73.4.a.b.


\begin{theorem}\label{cong79}
Let $\GritSp \subseteq \CFsthreelev{79}$ be the subspace of Gritsenko lifts.  
The characteristic polynomial~$q$ of $\Ttwo$ on~$\GritSp$ factors
over~$\Q$ as $q=q_2q_5$, where its irreducible factors $q_2$ and~$q_5$
are
\begin{align*}
q_2(x)  &= x^2 -11x +26,  \\
q_5(x)  &= x^5 -27 x^4 +261 x^3 -1077 x^2 + 1766 x -964.  
\end{align*}

Let~$a$ be a root of~$q_2$ and~$K=\Q(a)$.  
With reference to the elements $d_{2,j}(a)$ of~$K$ in Figure~\ref{table6}
and to the Gritsenko lifts $G[j]$ from Proposition~\ref{prop79},
consider an element of~$\GritSp(K)$,
$$
g_2(a)=\sum_{j=1}^7 d_{2,j}(a) G[j].
$$
Then $g_2(a)$ lies in~$\GritSp(\OK)$,  
and it is a $\Ttwo$-eigenform with eigenvalue~$a$,
and it is an eigenform of~$\Tp$ and~$\Tozpsq$ for all primes~$p$.  
The $\OK$-ideal
$$
\ideala=\id{2, a+1}
$$
is prime, and the Fourier coefficients and the Euler polynomials of
$f_{79}$ and~$g_2(a)$ are congruent modulo~$\ideala$.
The only (proper) ideal that gives a congruence between the Euler 
polynomials of $f_{79}$ and~$g_2(a)$ is~$\ideala$. 

Let~$b$ be a root of~$q_5$ and~$L=\Q(b)$.  
With reference to the elements $d_{5,j}(b)$ of~$L$ in Figure~\ref{table6}
and to the Gritsenko lifts $G[j]$ from Proposition~\ref{prop79},
consider an element of~$\GritSp(L)$,
$$
g_5(b)= \sum_{j=1}^7 d_{5,j}(b) G[j].
$$
Then $g_5(b)$ lies in~$\GritSp(\OL)$,  
and it has a $\Ttwo$-eigenform with eigenvalue~$b$,
and it is an eigenform of~$\Tp$ and~$\Tozpsq$ for all primes~$p$.  
Let
$w=\tfrac{35}4 - \tfrac{435}8b + \tfrac{565}{16}b^2 - \tfrac{25}4b^3
+\tfrac5{16}b^4$, an element of~$\OL$.
The $\OL$-ideal
$$
\idealb=\id{8,w}
$$
is the cube of a prime $\OL$-ideal over~$2$ of norm~$4$.
The Fourier coefficients of $f_{79}$ and~$5g_5(b)$ are congruent
modulo~$\idealb$,
and the Euler polynomials of $f_{79}$ and~$g_5(b)$ are congruent
modulo~$\idealb$.
Every (proper) ideal that gives a congruence between the Euler 
polynomials of $f_{79}$ and~$g_5(b)$ divides~$\idealb$.
\end{theorem}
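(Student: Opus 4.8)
The plan is to follow the template of Theorems~\ref{cong61} and~\ref{cong73}, with the $q_2$-factor routine and the $q_5$-factor carrying the one genuinely new ingredient. I would first compute enough Fourier coefficients of the Gritsenko lifts $G[1],\dots,G[7]$ of Proposition~\ref{prop79} (using~\cite{yuen22}) to write down the matrix $M$ of $\Ttwo$ on $\GritSp$ in this basis and to confirm that its characteristic polynomial is $q=q_2q_5$ with $q_2,q_5$ as stated. Since $\disc(q)=2^{16}\cdot4787257\neq0$, $q$ is separable, so every $\Ttwo$-eigenspace on $\GritSp$ is one-dimensional. For a root $a$ of $q_2$ I would verify (by computer algebra) that the vector $(d_{2,j}(a))_j$ of Figure~\ref{table6} has entries in $\OK$ and spans $\ker(aI-M)$ over $K=\Q(a)$, so that $g_2(a)\in\GritSp(\OK)$ is a $\Ttwo$-eigenform with eigenvalue $a$; separability then makes it an eigenform of every operator commuting with $\Ttwo$, which covers $\Tp$ and $\Tozpsq$ for $p\neq2$ automatically and $\HToz4$ by the commutator argument of Theorem~\ref{cong61} (Proposition~6.21 of~\cite{robertsschmidt07} together with $\CFsthreelev1=\{0\}$, valid because $79$ is prime). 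The identical argument applied to a root $b$ of $q_5$ and the vector $(d_{5,j}(b))_j$ shows that $g_5(b)\in\GritSp(\OL)$ is an eigenform of all the stated operators with $\Ttwo$-eigenvalue $b$.

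For the $\ideala=\id{2,a+1}$ statement I would argue exactly as in Theorem~\ref{cong61}. Reducing $q_2(x)=x^2-11x+26$ modulo $2$ gives $x(x+1)$, and since $\disc(q_2)=17$ is odd, Lemma~\ref{mirrorfactorlemma} yields $2\OK=\id{2,a}\id{2,a+1}$ with both factors prime of norm $2$; in particular $\ideala$ is prime. Writing $f_{79}=-32\idealf+\sum_{j=1}^7c_jG[j]$ as in Proposition~\ref{prop79}, I would exhibit an integer $\ell$ coprime to $2$ with $\ell(c_j-d_{2,j}(a))\in\id{a+1}$ for each $j$, so that Lemma~\ref{lemmatwo} (with $k=2$, $\idealuu=\id{a+1}$) gives $c_j-d_{2,j}(a)\in\ideala$; combining this with the fact that the Fourier coefficients of $32\idealf=2^5\idealf$ lie in $\ideala$ yields $f_{79}\equiv g_2(a)\bmod\ideala$ coefficientwise, and Lemma~\ref{lemmathree} (using $\CFsthreelev{79}=\CFsthreelev{79}^-$ and $79$ prime) promotes this to a congruence of all $p$-Euler polynomials. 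Minimality follows as in Theorem~\ref{cong61}: the $\Ttwo$- and $\Tthree$-eigenvalue differences of $g_2(a)$ and $f_{79}$ (computed from $M$ and the matrix of $\Tthree$ in~\cite{yuen22}), one of which is $a+5$, have norms whose greatest common divisor is a power of $2$, forcing a proper congruence ideal to have norm $2$ and to contain $a+5\equiv a+1\bmod2$, hence to equal $\ideala$.

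The $q_5$-factor is where the real work lies, and the main obstacle is that $2^{16}\mid\disc(q_5)$, so Lemma~\ref{mirrorfactorlemma} does not describe the factorization of $2$ in $\OL$; this is exactly the reason for introducing the auxiliary integer $w=\tfrac{35}4-\tfrac{435}8b+\tfrac{565}{16}b^2-\tfrac{25}4b^3+\tfrac5{16}b^4$. I would first determine the splitting of $2$ in $\OL$ by a $2$-adic factorization computation (for instance by exhibiting a $2$-maximal order properly containing $\Z[b]$), locating the prime $\idealp$ of residue degree $2$, so that $\No(\idealp)=4$. Then I would identify $\idealb=\id{8,w}$ with $\idealp^3$ by comparing $\idealq$-adic valuations of $8$ and of $w$ at every prime $\idealq$ of $\OL$ above $2$: the substance is that $w\in\idealp^3$ while $w$ is a unit at every other prime above $2$, which I expect to follow from computing $\No(w)$ together with a local analysis at $\idealp$.

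For the coefficientwise congruence I would rerun the Lemma~\ref{lemmatwo} argument, now with $f_{79}-5g_5(b)=-32\idealf+\sum_{j=1}^7(c_j-5d_{5,j}(b))G[j]$; the factor $5$ is genuinely needed because $5-1=4$ lies in $\idealp^2$ but not in $\idealp^3$, so it is $5g_5(b)$, not $g_5(b)$, whose Fourier coefficients match those of $f_{79}$ modulo $\idealb$. Concretely I would produce an integer $\ell$ coprime to $2$ with $\ell(c_j-5d_{5,j}(b))\in\id w$ for each $j$, apply Lemma~\ref{lemmatwo} (with $k=8$, $\idealuu=\id w$), and use that the Fourier coefficients of $2^5\idealf$ lie in $\id8\subseteq\idealb$, giving $f_{79}\equiv5g_5(b)\bmod\idealb$ coefficientwise. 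Since scaling a form does not change its Hecke eigenvalues, $g_5(b)$ and $5g_5(b)$ have the same $\HeckeT$-eigenvalues, so Lemma~\ref{lemmathree} delivers the congruence of the $p$-Euler polynomials of $f_{79}$ and $g_5(b)$ modulo $\idealb$. Finally, for the divisibility claim I would compute the $\Ttwo$- and $\Tthree$-eigenvalue differences of $g_5(b)$ and $f_{79}$; from their norms together with the factorization of $2$ in $\OL$ one reads off that they already generate $\idealp^3$, so the optimal congruence ideal is $\idealb$ itself, and therefore every proper ideal giving an Euler-polynomial congruence divides $\idealb$.
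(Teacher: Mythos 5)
Your proposal follows essentially the same route as the paper's proof: the same matrix-of-$\Ttwo$ construction and separability argument, Kummer--Dedekind (Lemma~\ref{mirrorfactorlemma}) for $\ideala$, Lemma~\ref{lemmatwo} with an auxiliary multiplier prime to~$2$ to obtain the coefficientwise congruences (including the scaling by~$5$ and the computer identification of $\id{8,w}$ as the cube of the norm-$4$ prime), Lemma~\ref{lemmathree} to pass to Euler polynomials, and eigenvalue-difference norms for the minimality and divisibility claims. The only deviations are cosmetic and do not affect correctness: the value $2^{16}\cdot4787257$ you quote is $\disc(q_5)$ rather than $\disc(q)$; the paper's sharp bound for the $q_5$ congruence ideal uses the $\Tfive$ (not $\Tthree$) eigenvalue difference, whose norm has $2$-part exactly $2^6$ (your plan would simply switch auxiliary operators if $\Tthree$ proved insufficient); and your aside that the factor~$5$ is ``genuinely needed'' is unproved but also not required by the statement.
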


\begin{figure}[htb]
$$
\begin{array}{|c||c|c|c|}
\hline
j\,  & c_j        & d_{2,j}(a)           & d_{5,j\down\!\!\!}(b)     \\
\hline\hline
1\up\down          & 4        & 5-a
& \frac{103}{2} - \frac{257}{4}b + \frac{185}{8}b^2 - 3b^3 + 
  \frac{1}{8}b^4   \\
\hline
2\up\down        & 13   & -8+a
& \frac{199}{4} - \frac{435}{8}b + \frac{257}{16}b^2 - \frac{7}{4}b^3 + 
  \frac{1}{16}b^4      \\
\hline
3\up\down        & -15   & 6-a
& -\frac{19}{4} + 
  \frac{87}{8}b - \frac{113}{16}b^2 + \frac{5}{4}b^3 - \frac{1}{16}b^4
                 \\
\hline
4\up\down        & 8  & 3-a
& 3-b             \\
\hline
5\up\down        & 0   & 9-a
& 10  -9b +b^2 
                   \\
\hline
6\up\down       & 5  & 4-a  &  4-b
                \\
\hline
7\up\down       & -11 & -18+3a  &  -\frac{203}{4} + \frac{443}{8}b - 
  \frac{257}{16}b^2 + \frac{7}{4}b^3 - \frac{1}{16}b^4  
                   \\
\hline
\end{array}
$$
\caption{Coefficients of Gritsenko lifts for $N=79$\label{table6}}
\end{figure}

%
%

\begin{proof}
Similarly to the proof of Theorem~\ref{cong61},
the matrix~$M$ of $\Ttwo$ on~$\GritSp$
for the basis $\{G[j]:j=1,\dotsc,7\}$ is
$$
M=
\left(
\begin{array}{rrrrrrr}
 0 & -2 & -2 & -2 & -8 & -2 & -6 \\
 -4 & 2 & 0 & -3 & -7 & -2 & -5 \\
 3 & 0 & 9 & 1 & 2 & 2 & 4 \\
 0 & -1 & 0 & 11 & -1 & -6 & -1 \\
 3 & 0 & 1 & 2 & 9 & 2 & 5 \\
 0 & -1 & 0 & 7 & -1 & -3 & -1 \\
 5 & 2 & 1 & -3 & 8 & 6 & 10 \\
\end{array}
\right)
$$
The characteristic polynomial~$q(x)=\det(xI-M)=q_2(x)q_5(x)$ is as
stated.

Let~$a$ be a root of~$q_2$ and $K=\Q(a)$.    
Each $d_{2,j}(a)$ lies in~$\OK$
and each $G[j]$ lies in $\GritSp(\Z)$,
so $g_2(a)$ lies in~$\GritSp(\OK)$.
The vector $(d_{2,1}(a), \dotsc, d_{2,7}(a))$
lies in $\operatorname{null}(aI-M)$, and so $g_2(a)$ is a
$\Ttwo$-eigenform in~$\GritSp(\OK)$ with eigenvalue~$a$.
The characteristic polynomial~$q$ of~$\Ttwo$ is separable because its
discriminant
$\disc(q)=2^{26}\cdot17^3\cdot59^2\cdot4787257$ is nonzero,
and so $g_2(a)$ is an eigenform of all the stated Hecke operators
as in the proof of Theorem~\ref{cong61}.

The reduction of~$q_2(x)$ modulo~$2$ is $q_2(x)\equiv x(x+1)\bmod2$,
and because $2$ does not divide $\disc(q_2)=17$,
Lemma~\ref{mirrorfactorlemma} says that
the $\OK$-ideal $\ideala=\id{2,a+1}$ is prime.

The values~$c_j$ in Figure~\ref{table6} are such that
$f_{79} = -32 {\idealf} + \sum_{j=1}^7 c_j G[j]$ in Proposition~\ref{prop79}.
Thus
$$
f_{79}-g_2(a) = -32 {\idealf} + \sum_{j=1}^7 (c_j-d_{2,j}(a)) G[j].
$$
For $j=1, \dotsc, 7$ we compute that $53(c_j-d_{2,j}(a)) \in \id{a+5}$.  
Lemma~\ref{lemmatwo} with $k=2$, $\ell=53$, $\idealuu=\id{a+5}$,
and $c=c_j-d_{2,j}(a)$ gives $c_j-d_{2,j}(a)\in\id{2,a+5}=\ideala$.   
The Fourier coefficients of~$-32\idealf$ lie in~$\ideala$ as well.
Thus $f_{79} \equiv g_2(a) \bmod \ideala$ at the level of Fourier coefficients.  
As in the proof of Theorem~\ref{cong61}, this proves that all
$p$-Euler polynomials of $f_{79}$ and~$g_2(a)$ are congruent modulo~$\ideala$.

The possible congruence ideals for $f_{79}$ and~$g_2(a)$ are limited
to norm-$2$ ideals by computing the norm of an eigenvalue differences
and then a second eigenvalue difference that is already a rational integer,
\begin{align*}
\No\left( \evhoef\Ttwo{g_2(a)}-\evhoef\Ttwo{f_{79}} \right)
&=\No(a+5)=2\cdot 53,   \\
\evhoef\Tthree{g_2(a)}-\evhoef\Tthree{f_{79}}
&=11-(-5)=16=2^4,
\end{align*}  
because the greatest common divisor of these values is~$2$.
Any such congruence ideal also contains the eigenvalue difference
$a+5$, so it contains~$\id{2,a+5}=\ideala$, and so it is~$\ideala$.

Let~$b$ be a root of~$q_5$ and $L=\Q(b)$.
Each $d_{5,j}(b)$ lies in~$\OL$
and each $G[j]$ lies in $\GritSp(\Z)$,
so $g_5(a)$ lies in~$\GritSp(\OL)$.
The vector $(d_{5,1}(b),\dotsc,d_{5,7}(b))$
lies in $\operatorname{null}(bI-M)$., and so $g_5(b)$ is a
$\Ttwo$-eigenform in $\GritSp(\OL)$ with eigenvalue~$b$.
Because the roots of~$q_5$ are distinct,
$g_5(b)$ is an eigenform of all the stated Hecke operators.


Similarly to just above,
$$
f_{79}-5g_5(b) = -32 {\idealf} + \sum_{j=1}^7 (c_j-5d_{5,j}(b)) G[j].
$$
Recall the element~$w$ of~$\OL$ and the ideal $\idealb=\id{8,w}$ from
the statement of the theorem.
Computer software says that $\idealb$ is the cube of the prime,
norm~$4$ ideal $\id{2,v}$ over~$2$, where
$v=\tfrac1{16}b^4-\tfrac54b^3+\tfrac{129}{16}b^2-\tfrac{151}8b+\tfrac{51}4$,
and so $\idealb$ has norm~$64$ as also can be confirmed directly.
For $j=1, \dotsc, 7$, we compute $635(c_j-5d_{5,j}(b)) \in\id w$.  
Lemma~\ref{lemmatwo} with $k=8$, $\ell=635$, $\idealuu=\id w$,
and $c=c_j-5d_{5,j}(b)$ give $c_j-5d_{5,j}(b) \in \id{8,w}=\idealb$.  
The Fourier coefficients of~$-32\idealf$ lie in~$\idealb$ as well.
Thus $f_{79} \equiv 5g_5(b) \bmod \idealb$ at the level of Fourier
coefficients.
As in the proof of Theorem~\ref{cong61}, and noting that scaling
$g_5(b)$ by~$5$ has no effect on its Hecke eigenvalues or Euler
polynomials, this proves that all $p$-Euler polynomials of $f_{79}$
and~$g_5(a)$ are congruent modulo~$\idealb$.

The possible congruence ideals for $f_{79}$ and~$g_5(a)$ are limited
to divisors of~$\idealb$ by computing the norms of two eigenvalue
differences,
\begin{align*}
&\No\left( \evhoef\Ttwo{g_5(b)} - \evhoef\Ttwo{f_{79}} \right)   
=\No\left( b+5 \right)= 2^8 \cdot 349,  \\
&\No\left( \evhoef\Tfive{g_5(b)} - \evhoef\Tfive{f_{79}} \right) \\
&\quad=\No\left( \tfrac{1884}{16} - \tfrac{1582}{16} b + \tfrac{401}{16} b^2 -
\tfrac{36}{16} b^3 + \tfrac{1}{16} b^4   - 3 \right) \\
&\quad=2^6 \cdot 5 \cdot 7^2 \cdot 67, 
\end{align*}
because the greatest common divisor of these norms is~$64$,
and so the congruence ideal has norm dividing~$64$.
Letting $\idealc$ denote the congruence ideal,
also the least common multiple of $\idealb$ and~$\idealc$ is a
congruence ideal, so its norm divides~$64=\No(\idealb)$,
so the least common multiple is~$\idealb$.
That is, $\idealc\mid\idealb$.
\end{proof}

The $w$ in this proof was found similarly to the $N=73$ case,
this time with $n=5$ and $j=3$.
The Gritsenko lifts with $\Ttwo$-eigenvalues given by~$q_2$ and~$q_5$
arise from the elliptic newforms 79.4.a.a and 79.4.a.b at the lmfdb.

\section{Computation of eigenvalues\label{sectioneigenvalues}}

\newcommand\smat{\begin{psmallmatrix}a&b\\b&c/N\end{psmallmatrix}}
\newcommand\ssp{\widecheck s}
\newcommand\sSp{\widehat s}
  
Throughout this section, $N$ is a positive integer, $f$ is an
element of~$\MFskN$, and $p$ is prime.
Further, $a,b,c$ are integers such that
a matrix and two of its $\SLtwo{\Q(\sqrt p)}$-equivalents are
positive,
$$
s=\begin{pmatrix}a&b\\b&c/N\end{pmatrix},
\qquad
\ssp=\begin{pmatrix}a/p&b\\b&pc/N\end{pmatrix},
\qquad
\sSp=\begin{pmatrix}pa&b\\b&c/(pN)\end{pmatrix}.
$$
Let $\phi_s(\tau)=\tau s$ for~$\tau\in\UHP$,
and let $\phi_s^*$ denote its pullback.
Proposition~\ref{restrictionbigsums} to follow gives initial formulas
for the restrictions $\phi_s^*(f\wtk\Tp)$ and $\phi_s^*(f\wtk\Topsq)$
when $p\nmid N$,
and similarly Proposition~\ref{prop:badprime-restrict} for
$\phi_s^*(f\wtk\Tp)$ and $\phi_s^*(f\wtk\Tozpsq)$ when $p\mmid N$.
Each of these initial formulas consists of finitely many finite sums,
but some of the sums are computationally intractable.
Propositions~\ref{speedup1} and~\ref{speedup-badprime} show that
various sums in the initial formulas can be replaced by sums
over smaller index sets, making them tractable after all.
These ideas were used in~\cite{bpptvy18}, but the details were not
given there.

\begin{lemma}
With reference to the matrix~$s$ just above, define a map from the
complex upper half plane to the $2$-dimensional Siegel upper half
space,
$$
\phi_s:\UHP\lra\UHP_2,\qquad \phi_s(\tau)=\tau s.
$$
Let $R\subseteq\C$ be a subring.  
Then the pullback of $\phi_s$ is a ring homomorphism from the graded
ring of Siegel paramodular forms of level~$N$ with coefficients in~$R$
to the graded ring of elliptic modular forms of level $\det(s)N$ with
coefficients in~$R$,
\begin{equation*}
\phi_s^*:\MFsnoweight(\KN)(R)\lra\MFsnoweight(\Gamma_0(\det(s)N))(R)
\end{equation*}
given by
$$
(\phi_s^*f)(\tau)=f(\tau s).
$$
The map $\phi_s^*$ multiplies weights by~$2$ and takes cusp forms to
cusp forms.
\end{lemma}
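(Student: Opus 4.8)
The plan is to realize $\phi_s^*$ as the pullback along an explicit embedding $\rho:\SLtwoR\hookrightarrow\SptwoR$ attached to the positive definite matrix~$s$, to deduce an intertwining law between the weight~$k$ operator for~$\KN$ and the weight~$2k$ operator for $\Gamma_0(\det(s)N)$, and then to read off modularity, $R$-rationality, holomorphy at cusps, multiplicativity, and the cusp-form statement. Write $s=\smallmat abb{c/N}$ and put $D=\det(s)\,N=ac-Nb^2$, a positive integer since $s\succ0$; note $s^{-1}=\tfrac1D\smallmat c{-Nb}{-Nb}{Na}$ has rational entries. I would introduce
$$
\rho:\SLtwoR\lra\SptwoR,\qquad
\rho\smallmat pqrt=\begin{pmatrix}pI_2&qs\\ rs^{-1}&tI_2\end{pmatrix},
$$
and check by a short $2\times2$-block computation that $\rho(g)'J\rho(g)=J$ for $g\in\SLtwoR$ and $\rho(g_1g_2)=\rho(g_1)\rho(g_2)$, so that $\rho$ is a homomorphism into $\SptwoR$. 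Since $(rs^{-1})(\tau s)=r\tau I_2$, one gets $\rho(g)\bigl(\phi_s(\tau)\bigr)=\tfrac{p\tau+q}{r\tau+t}\,s=\phi_s(g\tau)$ with $j(\rho(g),\phi_s(\tau))=\det\bigl((r\tau+t)I_2\bigr)=(r\tau+t)^2$, and hence, for all $g=\smallmat pqrt\in\SLtwoR$ and all holomorphic $f:\UHPtwo\to\C$, the intertwining identity
$$
(\phi_s^*f)|_{2k}\,g=\phi_s^*\bigl(f\wtk{\rho(g)}\bigr),
$$
which already shows $\phi_s^*$ turns weight~$k$ into weight~$2k$.

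Next I would verify $\rho\bigl(\Gamma_0(D)\bigr)\subseteq\KN$: writing $\rho\smallmat pqrt$ as a $4\times4$ matrix and comparing with the coordinate description of $\KN$ in Section~\ref{sectionParaDefs}, the blocks $pI_2$, $qs$, and $tI_2$ have the required shape automatically, while $rs^{-1}=\tfrac rD\smallmat c{-Nb}{-Nb}{Na}$ fits the paramodular pattern precisely when $D\mid ra$, $D\mid rb$, $D\mid rc$, which holds whenever $D\mid r$. So for $f\in\MkKN$ and $\gamma\in\Gamma_0(D)$ we have $f\wtk{\rho(\gamma)}=f$, and the intertwining identity gives $(\phi_s^*f)|_{2k}\gamma=\phi_s^*f$; thus $\phi_s^*f$ is weight-$2k$ invariant under $\Gamma_0(D)=\Gamma_0(\det(s)N)$. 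Holomorphy of $\phi_s^*f$ on $\UHP$ is immediate, since $\operatorname{Im}(\tau s)=\operatorname{Im}(\tau)\,s\succ0$ gives $\phi_s(\UHP)\subseteq\UHPtwo$. Substituting the Fourier expansion of~$f$ and using $\ip t{\tau s}=\tau\,\tr(ts)$, where $\tr(ts)=na+rb+mc\in\Znn$ for $t=\smallpindN\in\XtwoNsemi$ and $\tr(ts)=0$ only for $t=0$ (as $\tr(ts)=\tr(s^{1/2}ts^{1/2})$), one obtains
$$
(\phi_s^*f)(\tau)=\sum_{n\ge0}\Bigl(\,\sum_{t\in\XtwoNsemi,\ \tr(ts)=n}\fc tf\Bigr)q^n,
$$
the inner sums being finite because $s\succ0$ makes $\{t\succeq0:\tr(ts)=n\}$ bounded; this gives holomorphy at~$\infty$ and shows the $q$-coefficients lie in~$R$ whenever the $\fc tf$ do.

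For any other cusp $\gamma\infty$, $\gamma\in\SLtwoZ$, the intertwining identity gives $(\phi_s^*f)|_{2k}\gamma=\phi_s^*(F)$ with $F=f\wtk{\rho(\gamma)}$; since $\rho(\gamma)\in\SptwoQ$, $F$ is a holomorphic modular form for a group commensurable with~$\SptwoZ$, so by the Koecher principle its Fourier expansion is supported on positive semidefinite matrices, and then $(\phi_s^*F)(\tau)$ tends, as $\operatorname{Im}\tau\to\infty$, to the Fourier coefficient of~$F$ at $t=0$ — a finite limit, hence $\phi_s^*f$ is holomorphic at $\gamma\infty$. Therefore $\phi_s^*f\in\MFsnoweight(\Gamma_0(\det(s)N))(R)$ in weight~$2k$. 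Additivity and multiplicativity of $\phi_s^*$ hold pointwise, $\phi_s^*(1)=1$, and weights double, so $\phi_s^*$ is a ring homomorphism of the graded rings with the stated weight behaviour. Finally, if $f\in\SkKN$ its expansion is supported on positive definite~$t$, so $\tr(ts)>0$ throughout and $\phi_s^*f$ vanishes at~$\infty$; and for each $\gamma\in\SLtwoZ$, cuspidality of~$f$ means $\Phi(f\wtk{g'})=0$ for all $g'\in\SptwoQ$, which applied with $g'=\rho(\gamma)$ makes the Fourier coefficient of~$F$ at $t=0$ (which equals the constant term of $\Phi(F)$) vanish, so $\phi_s^*f$ vanishes at $\gamma\infty$ as well. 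Hence $\phi_s^*$ carries cusp forms to cusp forms.

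The algebra of the first two steps is routine; the part requiring care is the behaviour at cusps other than~$\infty$. The crux is the reduction, via the intertwining identity, of such a cusp of $\Gamma_0(\det(s)N)$ to the cusp~$\infty$ of the conjugated form $f\wtk{\rho(\gamma)}$, together with the use of the Koecher principle (and, for the cusp-form assertion, of the vanishing of the Siegel $\Phi$ operator) for holomorphic modular forms on a group merely commensurable with~$\SptwoZ$; one should resist invoking the sharper description of cusp forms—support on positive definite indices—recalled in Section~\ref{sectionParaDefs}, since that is special to the paramodular groups~$\KN$ and need not hold for their conjugates.
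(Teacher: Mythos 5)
Your proof is correct. Note that the paper itself gives no argument here: its ``proof'' is a one-line citation, saying the lemma follows by a straightforward modification of Poor--Yuen \cite[Proposition 5.4]{MR2379329}, the modification being exactly the level bookkeeping for $\KN$ in place of the full modular group. What you have written out is precisely that argument in full: the embedding $g=\smallmat pqrt\mapsto\rho(g)=\begin{psmallmatrix}pI_2&qs\\ rs^{-1}&tI_2\end{psmallmatrix}$ into $\SptwoR$, the intertwining law $(\phi_s^*f)|_{2k}\,g=\phi_s^*(f\wtk{\rho(g)})$ which accounts for the doubling of weights, the coordinate verification that $\rho(\Gamma_0(\det(s)N))\subseteq\KN$ (your condition $D\mid ra,rb,rc$ with $D=\det(s)N=ac-Nb^2$ is exactly what the displayed paramodular pattern demands, and $D\mid r$ suffices), the expansion $(\phi_s^*f)(\tau)=\sum_{n\ge0}\bigl(\sum_{t:\,\tr(ts)=n}\fc tf\bigr)q^n$ giving holomorphy at~$\infty$ and $R$-rationality of the coefficients, and the treatment of the remaining cusps by passing to $F=f\wtk{\rho(\gamma)}$ and invoking Koecher's principle together with the paper's $\Phi$-operator definition of cuspidality --- and you are right to avoid the positive-definite-support description of cusp forms there, since that sharper statement is justified in the paper only for $\KN$ itself. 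So your route is not conceptually different from what the cited proposition encapsulates, but it is self-contained and makes explicit where the level $\det(s)N$ and the weight doubling come from, whereas the paper's citation buys brevity at the cost of leaving the $\KN$-specific computation to the reader.
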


The elliptic modular form~$\phi_s^*f$ is the {\bf restriction} of~$f$
to the curve $\phi_s(\UHP)$, also called the restriction of~$f$
under~$s$.

\begin{proof}
The proof follows from a straightforward modification of a result of
Poor--Yuen \cite[Proposition 5.4]{MR2379329}.
\end{proof}

\newcommand\blockmat{\begin{psmallmatrix}A&B\\0&D\end{psmallmatrix}}

Let the paramodular form~$f\in\MFskN$ have Fourier expansion (in which
$\ip t\Omega=\tr(t\Omega)$)
$$
f(\Omega)=\sum_{t\in\XtwoNsemi}\fc tf\,\e(\ip t\Omega).
$$
Its restriction $\phi_s^*f\in\MFswtgp{2k}{\Gamma_0(\det(s)N)}$ has
Fourier expansion (in which $q=\e(\tau)$)
$$
(\phi_s^*f)(\tau)=\sum_{n=0}^{\infty}
\biggl(\sum_{\substack{\,t: \,\ip st=n}}\fc tf\biggr) q^n.
$$
Furthermore, if $f$ is slashed with a block upper triangular matrix
$\blockmat\in\GSp_4^+(\Q)$ with similitude $\mu = \det(AD)^{1/2}$ then
the restriction of the resulting function is
\begin{equation} \label{eqn:phisslash}
\begin{aligned}
&\phi_s^*(f\wtk{\blockmat})(\tau)
=(f\wtk{\blockmat})(s\tau) \\
&\qquad=\det(AD)^{k-3/2}\det(D)^{-k}f(AsD^{-1}\tau+BD^{-1}) \\
&\qquad=\det(A)^{k}\det(AD)^{-3/2}\sum_{n \in \Q_{\geq 0}}
\biggl(\sum_{\substack{t:\,\ip{AsD^{-1}}t=n}}
\fc tf\e\left(\ip{BD^{-1}}t\right)\biggr) q^n.
\end{aligned}
\end{equation}

In order to compute eigenvalues by the technique of restriction to a
modular curve,
we apply a restriction map~$\phi_s^*$ to the eigenvalue equation 
$\lambda_f(T) f = f \vert T = \sum_{j=1}^m f \vert t_j$.  
Here we assume $T=\KN\diag(a,b,c,d)\KN= \bigsqcup_{j=1}^m \KN t_j$ 
where the $t_j$ are upper block triangular and $m= \deg T$ is the 
number of cosets in $\KN\backslash T$.  Using equation~\eqref{eqn:phisslash}
for each term~$\phi_s^*(f \vert t_j)$, the restricted eigenvalue equation 
\begin{equation} \label{eqn:phisslashtwo}
\lambda_f(T) \phi_s^*(f)  = \sum_{j=1}^m \phi_s^*(f \vert t_j )  
\end{equation}
uniquely determines the eigenvalue $\lambda_f(T)$ as long as the elliptic 
modular form $\phi_s^*(f)$ does not vanish identically.  
Indeed, each successive power of~$q=\e(\tau)$ in equation~\eqref{eqn:phisslashtwo}
provides an independent evaluation of the eigenvalue~$\lambda_f(T)$ 
and is thus useful for checking computational infrastructure.  
For efficiency, the coefficients of equation~\eqref{eqn:phisslashtwo}
are evaluated over a finite field~$\F_{\ell}$ rather than over~$\Qbar$.  

Let $T$ have a similitude~$\mu$ that is a $p$-power.  
The factors $\e\left(\ip{BD^{-1}}t\right)$ in equation~\eqref{eqn:phisslash}
are $\mu$-th roots of unity.  
For simplicity assume $\lambda_f(T) \in \Z$.  
Choose an auxiliary prime~$\ell$ that 
splits completely in the cyclotomic field $K=\Q(\e(1/\mu))$.  
By Kummer--Dedekind, Lemma~\ref{mirrorfactorlemma} here, 
the $\mu$-th cyclotomic 
polynomial $\Phi_{\mu}$ splits in~$\F_{\ell}$.  
Let $r \in \Z$ give a root of $\Phi_{\mu}$ in~$\F_{\ell}$.   
In $K$ we know that 
$\No\left(  r - \e(1/\mu) \right) = \Phi_{\mu}(r) \equiv 0 \bmod \ell$, 
so that there is a prime ideal ${\frak m}$ in $\OK$  above $\langle  r - \e(1/\mu), \ell \rangle$ 
by Lemma~\ref{lemmaone}.  We evaluate the coefficients of equation~\eqref{eqn:phisslashtwo}
over the finite field $\OK/{\frak m} \cong \F_{\ell}$, using the congruence 
$\e(1/\mu) \equiv r \bmod {\frak m}$ to reduce the computation to integers, 
and obtain $\lambda_f(T) \bmod \ell$.  For sufficiently large~$\ell$, 
the general bound $\vert \lambda_f(T) \vert \le \mu^{k-3} \deg T$, 
compare Proposition~{6.7.1} in~\cite{bpptvy18}, determines  $\lambda_f(T) \in \Z$.  

The main computational advantage of restricting to modular curves, 
as opposed to using the formulae of Lemma~\ref{lemmafour}, 
is that many terms in equation~\eqref{eqn:phisslashtwo} may be omitted if we project 
onto integral powers of~$q$ after a partial summation.  
The speed-ups in Propositions~\ref{speedup1} and~\ref{speedup-badprime}    
prove that we may partially sum over index sets that are roughly a factor of~$p$ 
smaller than $\deg T$ and still preserve equality for integral powers of~$q$ in equation~\eqref{eqn:phisslashtwo}.  

A secondary benefit is that we may not need to compute Fourier coefficients of the eigenform~$f$.  For example, 
in section~\ref{sectionconstruction} each eigenform was given as a rational function of Gritsenko lifts~$G[i]$.  
The specializations $\phi_s^*(G[i] \vert t_j)$ are computed from the Fourier coefficients of these 
Gritsenko lifts, which may be reduced to computing Fourier coefficients of Jacobi forms.  
It is also computationally beneficial to use single rather than multiple variable power series.  
The~$s$ actually used to restrict~$f$ can be selected from a number of candidates for speed and 
to make the $q$-order of $\phi_s^*(f)$ small.  For $N=61$ and good primes~$p$, we used 
$s=\smallmat{122}{11}{11}{1}$, which gave $q$-order~$2$; for $p=61$, we used 
$s=\smallmat{13}{2}{2}{19/61}$ with $q$-order~$1$.  
For $N=73, 79$, in all cases 
we used $s=\smallmat{146}{17}{17}{2}$, $\smallmat{158}{47}{47}{14}$, respectively, 
each with $q$-order~$3$.
\smallskip

We now write speed-up theorems for computing the restrictions of
$f\wtk\Tp$ and $f\wtk\Topsq$ for $p\nmid N$ and of
$f\wtk\Tp$ and $f\wtk\Tozpsq$ for $p\mmid N$.
Recall that these Hecke operators are defined as slashes by double
cosets,
\begin{align*}
\Tp&=\KN\diag(1,1,p,p)\KN \\
\Topsq&=\KN\diag(1,p,p^2,p)\KN \\
\Tozpsq&=\KN \diag(p,p^2,p,1)\KN.
\end{align*}
See \cite{bpptvy18,schmidt18} for the decompositions of these double
cosets into right cosets.

For the case $p\nmid N$, we use the single coset decomposition
from~\cite{bpptvy18} and the following result after
applying~\eqref{eqn:phisslash}.

\begin{proposition} \label{restrictionbigsums}
Let $N,f,p,s,\ssp,\sSp$ be as at the beginning of this section.
Let $p\nmid N$.
For any integers $i,j,k$ let
$t_{i,j,k}=\begin{psmallmatrix}i/p&j/p\\j/p&k/p\end{psmallmatrix}$,
$u_{i,j,k}=\begin{psmallmatrix}i/p^2&j/p\\j/p&k/p^2\end{psmallmatrix}$,
and $v_i=pu_{0,ia,i(ia+2b)}$.
The restrictions of $f\wtk{\Tp}$ and $f\wtk{\Topsq}$ under~$s$ are
\begin{equation*} 
\begin{aligned}
&\phi_s^*(f\wtk\Tp)(\tau) = p^{2k-3} f(ps\tau)
+ p^{k-3}\sum_{i \bmod{p}}
f(\ssp\tau+t_{i,0,0}) \\
&\qquad + p^{k-3}\sum_{i,k\bmod p}f((\sSp+v_i)\tau+t_{0,0,k})
+ p^{-3}\sum_{i,j,k \bmod{p}}f(s\tau/p+t_{i,j,k})
\end{aligned}
\end{equation*}
and
\begin{equation*} 
\begin{aligned}
&\phi_s^*(f\wtk\Topsq)(\tau) 
= p^{3k-6} f(p\ssp\tau)
+ p^{3k-6}\sum_{i \bmod{p}}
f(p(\sSp+v_i)\tau) \\
&\qquad + p^{2k-6}\sum_{i\not\equiv 0 \bmod{p}}
f(s\tau+t_{i,0,0})
+ p^{2k-6}\sum_{\substack{i \bmod{p}, \\ j\not\equiv 0 \bmod{p}}}
f(s\tau+jt_{i^2,i,1}) \\
&\qquad + p^{k-6}
\sum_{\substack{i\bmod{p^2}, \\ j \bmod{p}}}
f(\ssp\tau/p+u_{i,j,0})
+ p^{k-6}\sum_{\substack{i,j \bmod{p}, \\ k\bmod{p^2}}} 
f((\sSp+v_i)\tau/p+u_{0,j,k})
\end{aligned}
\end{equation*}
\end{proposition}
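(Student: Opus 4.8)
The plan is to pull the known right-coset decompositions of the double cosets $\Tp=\KN\diag(1,1,p,p)\KN$ and $\Topsq=\KN\diag(1,p,p^2,p)\KN$ through the restriction map~$\phi_s^*$, one coset at a time, using the slash-pullback formula~\eqref{eqn:phisslash}. First I would write $\Tp=\bigsqcup_j\KN t_j$ and $\Topsq=\bigsqcup_j\KN t_j'$ with every coset representative block upper triangular, $t_j=\blockmat$ for data $A,B,D$ depending on~$j$ (these decompositions are recalled from~\cite{bpptvy18,schmidt18}). Since $f\wtk\Tp=\sum_j f\wtk{t_j}$ and $\phi_s^*$ is linear, $\phi_s^*(f\wtk\Tp)=\sum_j\phi_s^*(f\wtk{t_j})$, and similarly for~$\Topsq$, so by~\eqref{eqn:phisslash} the problem reduces to evaluating, for each coset,
$$
\phi_s^*\!\left(f\wtk{\blockmat}\right)(\tau)=\det(A)^k\det(AD)^{-3/2}\,f\!\left(AsD^{-1}\tau+BD^{-1}\right).
$$
Thus there are three bookkeeping tasks per coset: compute the scalar $\det(A)^k\det(AD)^{-3/2}$, which is a power of~$p$ because the similitude $\det(AD)^{1/2}$ is a $p$-power; compute the symmetric matrix $AsD^{-1}$ that replaces~$s$ inside~$f$; and compute the translation $BD^{-1}$.

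Second I would carry out these computations and sort the cosets according to the elementary divisors of~$A$. For $\Tp$: the single coset with $A\sim\diag(p,p)$ and $D=I$ gives $AsD^{-1}=ps$, no translation, and scalar $p^{2k-3}$, hence the term $p^{2k-3}f(ps\tau)$; the cosets whose $A$ has elementary divisors~$1$ and~$p$ fall into two families, one of $p$ cosets producing arguments $\ssp\tau+t_{i,0,0}$ and one of $p^2$ cosets, involving a unipotent shear, producing $(\sSp+v_i)\tau+t_{0,0,k}$, each with scalar $p^{k-3}$; and the $p^3$ cosets with $A=I$, $D=pI$, $B$ running over symmetric matrices modulo~$p$ give $AsD^{-1}=s/p$ and translations $BD^{-1}=t_{i,j,k}$, with scalar $p^{-3}$. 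Collecting these, and noting that $1+p+p^2+p^3=\deg\Tp$ accounts for every coset, yields the first formula. The $\Topsq$ computation is structurally identical but has more families of representatives and denominators running up to~$p^2$ --- which is where the matrices $u_{i,j,k}$ enter --- with the scalars $p^{3k-6}$, $p^{2k-6}$, $p^{k-6}$ read off from $\det(A)^k\det(AD)^{-3/2}$ shape by shape; once more one checks that the index-set cardinalities sum to~$\deg\Topsq$.

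I expect the main obstacle to be tracking how a unipotent shear of a diagonal block interacts with the off-diagonal entry~$b$ of~$s$. Conjugating an $\sSp$-type matrix by such a shear and clearing the extra factor of~$p$ produces not merely a shift linear in the shear parameter~$i$ but also a term quadratic in~$i$, and one must verify the algebraic identity that the net effect is exactly $\sSp+v_i$ with $v_i=pu_{0,ia,i(ia+2b)}$ --- the combination $ia+2b$ being the telltale cross term. Organizing the analogous but longer matrix arithmetic for $\Topsq$, keeping the denominators ($p$ versus $p^2$) straight, and verifying at each stage that the chosen representatives genuinely lie in~$\KN$ (the level-$N$ integrality conditions restrict which shears are admissible, and hence which shifts occur), is where the real care is needed; everything else is routine matrix multiplication and collection of like terms.
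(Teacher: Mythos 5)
Your proposal is correct and is essentially the paper's own argument: the paper obtains this proposition exactly by taking the block upper triangular right-coset representatives for $\Tp$ and $\Topsq$ from~\cite{bpptvy18,schmidt18} and applying the restriction formula~\eqref{eqn:phisslash} coset by coset, which is precisely your plan, including the sorting by elementary divisors of~$A$, the scalars $\det(A)^k\det(AD)^{-3/2}$, and the shear computation producing $v_i=pu_{0,ia,i(ia+2b)}$.
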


Upon expanding in Puiseux $q$-series, there is cancellation within
the sums of restrictions in Proposition~\ref{restrictionbigsums}.
The following proposition, which repeats Proposition~6.3.8
of~\cite{bpptvy18} but also includes some additional formulas,
shows that partial summation gives new restrictions whose sum
{\em over smaller index sets\/} equals the original sum for integral
powers of~$q$.
The proposition is subtle in that its simpler coefficients
necessarily match the original ones only at integral powers.
For a Puiseux series $f\in\C[[q^{1/\infty}]]$ and $e\in\Q_{\geq 0}$,
let $\Coeff_ef$ denote the coefficient of $q^e$ in~$f$, a complex number.

\begin{proposition}\label{speedup1}
Let $N,f,p,s,\ssp,\sSp$ be as at the beginning of this section.
Let $p\nmid N$.
For any integers $i,j,k$ let
$t_{i,j,k}=\begin{psmallmatrix}i/p&j/p\\j/p&k/p\end{psmallmatrix}$,
$u_{i,j,k}=\begin{psmallmatrix}i/p^2&j/p\\j/p&k/p^2\end{psmallmatrix}$,
and $v_i=pu_{0,ia,i(ia+2b)}$.
The following statements hold for all $e\in\Znn$.
\begin{enumalph}
\item If $p \nmid a$ then
\begin{align*}
\Coeff_e \sum_{i \bmod{p}}
f(\ssp\tau+t_{i,0,0}) &= p \Coeff_e f(\ssp\tau)
\intertext{and}
\Coeff_e \sum_{i,j,k \bmod{p}}f(s\tau/p+t_{i,j,k})
&=p\Coeff_e\sum_{j,k \bmod{p}}f(s\tau/p+t_{0,j,k})
\intertext{and}
\Coeff_e \sum_{\substack{i\bmod{p^2}\\ j \bmod{p}}} f(\ssp\tau/p+u_{i,j,0})
&=p^2 \Coeff_e \sum_{\substack{ j \bmod{p}}} f(\ssp\tau/p+u_{0,j,0}).
\end{align*}
\item If $p \nmid b$ then
\begin{align*}
\Coeff_e \sum_{i,j,k \bmod{p}}f(s\tau/p+t_{i,j,k})
&=p\Coeff_e \sum_{i,k \bmod{p}}f(s\tau/p+t_{i,0,k})\\
\intertext{and}
\Coeff_e\sum_{\substack{i\bmod{p^2}\\ j \bmod{p}}} f(\ssp\tau/p+u_{i,j,0})
&=p\Coeff_e\sum_{\substack{ i \bmod{p^2}}}f(\ssp\tau/p+u_{i,0,0}).
\end{align*}
\item If $p \nmid c$ then
\begin{equation*}
\Coeff_e \sum_{i,j,k \bmod{p}}f(s\tau/p+t_{i,j,k})
=p\Coeff_e \sum_{i,j \bmod{p}}f(s\tau/p+t_{i,j,0}).
\end{equation*}
\item
For $i \in \Z$, if $p  \nmid  c+i(ia+2b)N$ then
\begin{align*}
\Coeff_e \sum_{k \bmod p}f((\sSp+v_i)\tau+t_{0,0,k})
&=p\Coeff_ef((\sSp+v_i)\tau) \\
\intertext{and}
\Coeff_e\sum_{\substack{j\bmod{p}\\k \bmod{p^2}}}f((\sSp+v_i)\tau/p+u_{0,j,k})
&=p^2\Coeff_e\sum_{\substack{j \bmod{p}}}f((\sSp+v_i)\tau/p+u_{0,j,0}).
\end{align*}
\end{enumalph}
\end{proposition}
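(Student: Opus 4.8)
The plan is to reduce every one of the eight displayed identities to an elementary evaluation of geometric sums over $p$-th and $p^2$-th roots of unity. Expand the paramodular form as $f(\Omega)=\sum_{t\in\XtwoNsemi}\fc tf\,\e(\ip t\Omega)$ with $t=\begin{psmallmatrix}n&r/2\\r/2&mN\end{psmallmatrix}$, so that for any positive definite rational symmetric $M$ and rational symmetric $W$ the translate $f(M\tau+W)$ has the Puiseux expansion $\sum_{t\in\XtwoNsemi}\fc tf\,\e(\ip tW)\,q^{\ip tM}$. In each identity the matrix part $M$, one of $\ssp$, $s/p$, $\ssp/p$, $\sSp+v_i$, or $(\sSp+v_i)/p$, is literally the same on the two sides, so $\Coeff_e$ extracts on both sides the sum of $\fc tf\,\e(\ip tW)$ over the finite set $\{t\in\XtwoNsemi:\ip tM=e\}$; since $e\in\Znn$ this constraint forces $\ip tM\in\Z$. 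That integrality is the only use of the hypothesis $e\in\Znn$, and it is precisely why the reduced coefficients match the original ones only at integral powers of $q$, as the statement cautions.

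First I would record the traces. For a summand indexed by one of the parameter matrices (for instance $t_{i,j,k}$, $t_{0,0,k}$, $u_{i,j,0}$, or $u_{0,j,k}$) the phase $\ip tW$ is an explicit $\Z$-linear form in the summation variables divided by $p$ or $p^2$; for example $\ip t{t_{i,j,k}}=(ni+rj+mNk)/p$ and $\ip t{u_{i,j,0}}=(ni+rjp)/p^2$. Hence summing $\e(\ip tW)$ over the prescribed ranges of those variables factors as a product of geometric sums, each equal to the size of its range when the corresponding coefficient of the linear form is divisible by the appropriate power of $p$, and equal to $0$ otherwise; here one uses $p\nmid N$ to convert $p\mid mN$ into $p\mid m$ and $p^2\mid mN$ into $p^2\mid m$. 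Consequently, after applying $\Coeff_e$, the left side of the identity equals the cardinality of the large index set times $\sum_t\fc tf$ over the $t$ with $\ip tM=e$ satisfying the full list of divisibility conditions, while the right side equals the stated prefactor times the cardinality of the small index set times $\sum_t\fc tf$ over the $t$ with $\ip tM=e$ satisfying only a sublist of those conditions; the numerical factors already agree by the choice of prefactor, so it remains to see that the two ranges of summation coincide.

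The crux is then a one-line divisibility deduction in each case. Since the sublist is contained in the full list, it suffices to check that the one extra divisibility follows from $\ip tM\in\Z$ together with the sublist. In every instance $\ip tM$ is, up to the denominator $p$ or $p^2$, a sum of three monomials whose coefficients are essentially $a$, $b$, and $c$; for $M=\sSp+v_i$ a direct computation gives $\sSp+v_i=\begin{psmallmatrix}pa&b+ia\\b+ia&(c+i(ia+2b)N)/(pN)\end{psmallmatrix}$, so there the third coefficient is $c+i(ia+2b)N$. The sublist of divisibilities clears all but one of those monomials, so integrality of $\ip tM$ forces the relevant power of $p$ to divide the surviving monomial; since its prime-to-$p$ part is coprime to $p$ by the stated hypothesis ($p\nmid a$ in (a), $p\nmid b$ in (b), $p\nmid c$ in (c), $p\nmid c+i(ia+2b)N$ in (d)), the missing divisibility follows. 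For example, in the first identity of (a) one has $\ip t{\ssp}=na/p+rb+mpc$, so $\ip t{\ssp}\in\Z$ and $p\nmid a$ give $p\mid n$, which is exactly what makes $\sum_{i\bmod p}\e(ni/p)=p$; and in the second identity of (d), $\ip t{(\sSp+v_i)/p}=na+r(b+ia)/p+m(c+i(ia+2b)N)/p^2$, so integrality together with the sublist condition $p\mid r$ and the hypothesis $p\nmid c+i(ia+2b)N$ forces $p^2\mid m$.

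I expect the difficulty here to be organizational rather than conceptual: there are eight identities across four cases, each requiring a $2\times2$ trace written out with careful attention to the $1/p$ versus $1/p^2$ scalings and to the factor $N$ in the lower-right entry of $t$. The one mildly delicate point is case (d), where one first simplifies $\sSp+v_i$ as above and notes that it is again positive definite with $\det(\sSp+v_i)=\det s$, hence a legitimate restriction matrix (these matrices already occur in Proposition~\ref{restrictionbigsums}); thereafter its two identities run exactly as those of (a)--(c) with $c$ replaced throughout by $c+i(ia+2b)N$. No genuinely hard step arises; the whole proposition packages the elementary fact that an integral value of $\ip tM$ pins down the $p$-divisibility of one entry of $t$ once the other entries are controlled.
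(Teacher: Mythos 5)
Your proposal is correct and follows essentially the same route as the paper's proof: expand each translate $f(M\tau+W)$ in its Fourier series, use orthogonality of the root-of-unity phases over the index sets, and combine integrality of the exponent $e$ with the coprimality hypothesis ($p\nmid a$, $b$, $c$, or $c+i(ia+2b)N$) to show the surviving support conditions on the two sides coincide. The paper writes out only the second identity of (d) (citing \cite{bpptvy18} for (c)) and declares the rest similar, whereas you state the uniform mechanism covering all eight identities; your computation of $\sSp+v_i$ and of the relevant traces matches the paper's (indeed your exponent $m(c+i(ia+2b)N)/p^2$ is the correct one).
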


\begin{proof}
We prove the second part of~(d), the others being similar;
(c) is proved in~\cite{bpptvy18}.
Let $p\nmid c+i(ia+2b)N$.  Let $e\in\Znn$.
With $\myT=\begin{psmallmatrix}n&r/2\\r/2&mN\end{psmallmatrix}$,
the coefficient of $q^e$ on the left side is
\begin{equation*} 
\sum_{\substack{j\bmod{p},\ k\bmod{p^2}\\
n,r,m:\, an+(ia+b)r/p+(c+i(ia+2b)N)mN/p^2=e}}
\fc\myT f \e(jr/p+kmN/p^2).
\end{equation*}
Because $\sum_{j\bmod{p}}\e(jr/p)=0$ if $p\nmid r$, this sum is
\begin{equation*} 
\begin{aligned}
\sum_{\substack{
n,r,m:\, an+(ia+b)r/p\\
\qquad\qquad+(c+i(ia+2b)N)mN/p^2=e
\\
p\mid r}
}
\ \sum_{\substack{j\bmod{p},\ k\bmod{p^2}}}
\fc\myT f \e(jr/p+kmN/p^2).
\end{aligned}
\end{equation*}
Because $p \nmid  c+i(ia+2b)N$, if $p|r$ then $p^2\mid m$ inside the
summation (since $p\nmid N$).  Thus the above sum becomes
\begin{align*}
&\sum_{\substack{
n,r,m:\, an+(ia+b)r/p\\
\qquad\qquad+(c+i(ia+2b)N)mN/p^2=e
\\
p|r}
}
\sum_{\substack{j\bmod{p},\ k\bmod{p^2}}}
\fc\myT f\e(jr/p+ 0 ) \\
&\qquad\qquad =
\sum_{\substack{
n,r,m:\, an+(ia+b)r/p\\
\qquad\qquad+(c+i(ia+2b)N)mN/p^2=e
\\
p|r}
}
\sum_{\substack{j\bmod{p}}}
p^2\fc\myT f\e(jr/p )
 \\   
&\qquad\qquad =
\sum_{\substack{
n,r,m:\, an+(ia+b)r/p\\
\qquad\qquad+(c+i(ia+2b)N)mN/p^2=e
}
}
\sum_{\substack{j\bmod{p}}}
p^2\fc\myT f\e(jr/p )
 \\   
&\qquad\qquad =
p^2\sum_{j\bmod{p}}\sum_{\substack{
n,r,m:\, an+(ia+b)r/p\\
\qquad\qquad+(c+i(ia+2b)N)mN/p^2=e
}
}
\fc\myT f\e(jr/p)
 \\   
&\qquad\qquad=p^2\sum_{j \bmod{p}}\Coeff_ef((\sSp+v_i)\tau/p+u_{0,j,0}).
\end{align*}
\end{proof}

\def\aa{\hat p}
\def\cc{\hat N}
We now give similar speed-up theorems for the case when $p\mmid N$.
Having only one power of~$p$ divide~$N$ is needed to have all upper
triangular coset representatives.

\begin{proposition}\label{prop:badprime-cosets}
Let $p\mmid N$.
Fix $\aa,\cc\in\Z$ such that $\aa p + \cc N/p = 1$.
We have the following right coset decompositions.
\begin{equation*} 
\begin{aligned}
&\KN \diag(p,p,1,1) \KN
= \sum_{\substack{i,j,k\bmod{p}}}
\KN 
\begin{psmallmatrix} 
1 &  0  &  i &  j \\
0  &  1  &  j  &  k/p  \\ 
    &         &  p  &  0  \\
    &         &  0  &  p 
\end{psmallmatrix} \\
&\qquad + \sum_{i,j \bmod{p}}
 \KN 
\begin{psmallmatrix} 
p  &  0  &  0  &  0  \\
i  &  1  &  0  &  j/p  \\ 
    &         &  1  &  -i  \\
    &         &  0  &  p 
\end{psmallmatrix}
+ \sum_{i,j \bmod{p}} \KN 
\begin{psmallmatrix} 
1  &  - N j  & i &  0  \\
0  &  p   &  0  & 0\\ 
    &         &  p & 0  \\
    &         &   N j   &  1 
\end{psmallmatrix} \\
&\qquad 
+  \sum_{\substack{j\bmod{p}   }}  
 \KN 
\begin{psmallmatrix} 
p  &  -N  &  jN/p &  j  \\
\cc  &  \aa p   &  \aa j &  -\cc j/p  \\ 
    &         &  \aa p  &  -\cc  \\
    &         &  N  &  p 
\end{psmallmatrix}
+ \KN 
\begin{psmallmatrix} 
p  &  0  &     &     \\
0 &  p &     &     \\ 
    &         &  1  &  0 \\
    &         &  0  &  1 
\end{psmallmatrix},
\end{aligned}
\end{equation*}
\begin{equation*} 
\begin{aligned}
&\KN \diag(p,p^2,p,1) \KN
= \sum_{\substack{i,j\bmod{p}\\k\bmod{p^2}}}
\KN 
\begin{psmallmatrix} 
p&  0  &  0&  jp \\
i  &  1  &  j  &  -ij+k/p  \\ 
    &         &  p  &  -ip  \\
    &         &  0  &  p^2 
\end{psmallmatrix} \\
&\qquad + \sum_{j \bmod{p}}
 \KN 
\begin{psmallmatrix} 
p  &  {- N j p}  &   &    \\
0  &  p^2   &    & \\ 
    &         &  p & 0  \\
    &         &   N j   &  1 
\end{psmallmatrix}
+  \sum_{\substack{i,j,k\bmod{p}\\k\ne0\bmod{p}}}
 \KN 
\begin{psmallmatrix} 
p  &  -j Np        &  0                 &  0 \\
-i  &  i j  N + p  &  j N k/p    &  k/p \\ 
    &                    &  i j N + p  &  i \\
    &                    &  j N p        &  p 
\end{psmallmatrix} \\
&\qquad + \sum_{\substack{i,j \bmod{p}\\   i\ne0  \bmod{p}}} \KN 
\begin{psmallmatrix} 
p     &  -Npj       &  -ij N                    &  -i  \\
\cc  &  p-\cc Nj  &  i(\cc N j/p-1)   &  i\cc /p  \\ 
      &               &  p-\cc Nj                  &  -\cc  \\
      &               &  j N p                   &  p 
\end{psmallmatrix}.
\end{aligned}
\end{equation*}
\end{proposition}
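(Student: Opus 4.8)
The plan is to derive both decompositions from the right-coset decompositions of $\KN\diag(p,p,1,1)\KN$ and $\KN\diag(p,p^2,p,1)\KN$ recorded by Schmidt in~\cite{schmidt18}, which are available precisely because $p\mmid N$, and then to replace each of Schmidt's representatives by an upper block triangular representative of the same right $\KN$-coset --- this is exactly the operation already carried out in miniature in the proof of Lemma~\ref{lemmafour}, where the element $S\in\KN$ is used to triangularize via $SU=C_0(NMx/p)$. Counting the matrices listed in the proposition gives $p^3+2p^2+p+1$ cosets for $\diag(p,p,1,1)$ and $p^3(p+1)$ cosets for $\diag(p,p^2,p,1)$, which agree with the degrees of $\Tp$ and $\Tozpsq$ for $p\mmid N$ as in~\cite{schmidt18}. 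So it suffices to prove two things: (i) every listed matrix lies in the stated double coset, and (ii) no two listed matrices lie in the same right $\KN$-coset; a counting argument then forces the listed cosets to exhaust the double coset.

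For (i), I would first check for each listed matrix $h$ that $h'Jh=\mu J$ with $\mu=p$ (resp.\ $p^2$), so that $h\in\GSppos2\Q$ has the correct similitude; this is a direct matrix computation, and for the families whose entries involve $\aa,\cc$ it is the Bezout relation $\aa p+\cc N/p=1$ that makes the symplectic identity close up. Membership in the double coset itself I would establish by exhibiting, family by family, an explicit factorization $h=\gamma_1\,\diag(\dots)\,\gamma_2$ with $\gamma_1,\gamma_2\in\KN$. For the unipotent and block-unipotent families this is pure bookkeeping: $h$ is visibly a paramodular unipotent matrix times the diagonal matrix times another paramodular unipotent matrix. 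For the remaining ``mixed'' families --- the fourth family in each of the two lists, the ones with the $\aa,\cc$ entries --- the factorization again uses $\aa p+\cc N/p=1$, and it is here that $p\mmid N$ (so that $N/p$ is prime to $p$) is essential, in the same way it is essential for the triangularization in the proof of Lemma~\ref{lemmafour}.

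For (ii), two listed representatives $h,h'$ give the same right coset exactly when $h(h')\inv\in\KN$. Within a single family this fails because the residues $i,j,k$ (taken mod $p$, or mod $p^2$ for the index $k$ in the first family of the $\diag(p,p^2,p,1)$ list) occur in matrix positions where membership in $\KN$ imposes an integrality or divisibility constraint, so distinct residues yield distinct cosets. Across different families, $h(h')\inv$ has some entry violating one of those constraints --- typically a $p$ in a denominator, or a bottom-row entry not divisible by $N$ --- again using $p\mmid N$. These are all single-entry checks, uniform in $p$.

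The main obstacle is the treatment of the mixed families in step (i): producing the explicit $\KN$-factorization --- equivalently, exhibiting the element of $\KN$ that triangularizes Schmidt's representative --- and simultaneously checking in step (ii) that these representatives are inequivalent to all of the unipotent-family ones; everything else is a finite verification that is uniform in $p$. An alternative to quoting~\cite{schmidt18} for exhaustiveness would be to prove directly that $\KN$ times the Siegel parabolic meets every right coset in the double coset, by inducting on the $p$-adic valuations of the lower-left block and using $p\mmid N$; but quoting the known decomposition and its degree is shorter, and that is the route I would take.
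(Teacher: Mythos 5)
Your proposal is correct in outline but takes a genuinely different, and considerably heavier, route than the paper. The paper never argues by membership plus disjointness plus counting: it observes that every family in each list except the fourth is verbatim the set of right coset representatives in Proposition 2.10 of \cite{schmidt18}, so nothing needs to be verified for those, and for the fourth family it proves a single coset identity --- Schmidt's representative, indexed by $i$ with $p\nmid i$, times the inverse of the new upper block triangular representative, indexed by $\hat\imath$ with $i\hat\imath\equiv1\bmod p$, is written out explicitly and shown to lie in $\KN$, the key point being that $1-i\hat\imath\,\hat N N/p$ is divisible by $p$, where $\hat p p+\hat N N/p=1$; re-indexing $\hat\imath\mapsto j$ gives the stated family, and the second double coset is handled the same way. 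Because each new representative is exhibited in the same right coset as one of Schmidt's, exhaustiveness and disjointness are inherited for free, with no explicit factorizations $\gamma_1\diag(\dotsc)\gamma_2$, no pairwise inequivalence checks, and no degree computation. The degree is exactly where your route is fragile: you assert, without computing it, that the printed lists have the right cardinalities, yet the paper's replacement is constructed only for $p\nmid i$, and the corresponding term of Proposition~\ref{prop:badprime-restrict} runs over $j\not\equiv0\bmod p$, indicating that this family contributes $p-1$ cosets and that the first degree is $p^3+2p^2+p$ rather than the $p^3+2p^2+p+1$ you counted; such an off-by-one is harmless to the paper's replacement argument but fatal to one whose exhaustiveness step is purely a count, so if you take this route you must pin the degrees down precisely. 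Your instinct that the triangularization in the proof of Lemma~\ref{lemmafour} is the model is right; the paper's proof is exactly that replacement, carried out once for the one family that differs from Schmidt's list.
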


\def\ihat{{\hat\imath}}
\begin{proof}
The coset representatives of the decomposition of $\KN \diag(p,p,1,1)
\KN$ are precisely the right coset representatives given in
Proposition 2.10 of \cite{schmidt18}, except that we have replaced the
last representative in Proposition~2.10 of~\cite{schmidt18} as follows:
for~$p\nmid i$, replace
\begin{equation*}
\left(\begin{matrix} 
p &  0  &     &     \\
0 &  p  &     &     \\ 
  &     &  1  &  0 \\
  &     &  0  &  1 
\end{matrix}\right)
\left(\begin{matrix} 
1       &  0       &     &     \\
0       &  1       &     &     \\ 
0       &  i\cc N  &  1  &  0  \\
i\cc N  &  0       &  0  &  1 
\end{matrix}\right)
\quad\text{with}\quad
\left(\begin{matrix} 
p  &  -N  &  \ihat N/p &  \ihat \\
\cc  &  \aa p   &  \aa \ihat &  -\cc \ihat/p  \\ 
    &         &  \aa p  &  -\cc  \\
    &         &  N  &  p 
\end{matrix}\right),
\end{equation*}
where $\ihat$ is such that $i\ihat\equiv1\bmod{p}$.
It is a straightforward calculation that the left-hand representative
multiplied on the right by the inverse of the right-hand
representative is
\begin{equation*}
\left(\begin{matrix}
 (i \ihat \cc N+p)/p
  & (i \ihat \cc N^2- Np)/p^2
  & -\ihat  N/p
  & -\ihat \\
  (\cc p-i \ihat \cc^2N)/p^2
  & ((i \ihat \cc N +p)\aa)/p
  & -\ihat \aa
  & \ihat \cc/p \\
  (i \cc^2  N)/p
  & -i \cc  N \aa
  & p \aa
  & -\cc \\
  -i \cc  N
  & -(i \cc N ^2)/p
  &  N
  & p \\
\end{matrix}\right).
\end{equation*}
Using  the fact that $1 - i\ihat \cc N/p$ is a multiple of $p$, it is
straightforward to show that the entries satisfy the conditions for
the matrix to be in $\KN$.
Summing over $i\bmod{p}$, $i\ne0$ is the same as summing over
$\ihat\bmod{p}$, $\ihat\ne0$, and so we replace $\ihat$ with $j$.
Thus we may replace the representative as stated.
The proof of the decomposition of $\KN \diag(p,p^2,p,1) \KN$ is similar.
\end{proof}

In the next two propositions, $t_{i,j,k}$ and~$u_{i,j,k}$ are defined
differently than they were in Propositions~\ref{restrictionbigsums}
and~\ref{speedup1}.

\begin{proposition}\label{prop:badprime-restrict}
Let $N,f,p,s,\ssp,\sSp$ be as at the beginning of this section.
Let $p\mmid N$.
For any integers $i,j,k$ let
$t_{i,j,k}=\begin{psmallmatrix}i/p&j/p\\j/p&k/p^2\end{psmallmatrix}$,
$u_{i,j,k}=\begin{psmallmatrix}i/p&j/p\\j/p&k/p^3\end{psmallmatrix}$,
$v_i=pt_{0,ia,i(ia+2b)}$,
and $w_j=j\begin{psmallmatrix}(-2b + jc)N/p&-c\\-c&0\end{psmallmatrix}$.
Fix $\aa,\cc\in\Z$ such that $\aa p + \cc N/p = 1$.
The restrictions of $f\wtk\Tp$  and $f\wtk\Tozpsq$ are
\begin{equation*} 
\begin{aligned}
&\phi_s^*(f\wtk\Tp)(\tau) \\
&= p^{-3} \sum_{i,j,k \bmod{p}}
f(s\tau/p + t_{i,j,k}) \\
&\quad + p^{k-3}\sum_{i,k \bmod{p}}
f((\sSp+v_i)\tau+t_{0,0,k}) \\
&\quad + p^{k-3}\sum_{i,j \bmod{p}}
f((\ssp+w_j)\tau+t_{i,0,0}) \\
&\quad + p^{k-3}\sum_{\substack{j\not\equiv 0 \bmod{p}}}
f\Bigl(
\begin{psmallmatrix} 
-2 b N + c N/p + a p &  b - \aa c + a \cc -2 b \cc N/p   \\
b - \aa c + a \cc - 2 b \cc N/p  &  
\frac{-\aa c \cc N + a \cc^2 N + \aa c p + 2 \aa b \cc Np}{Np}  
\end{psmallmatrix} 
\tau+t_{0,j,0}\Bigr) \\
& \quad + p^{2k-3}f(ps\tau)
\end{aligned}
\end{equation*}
\begin{equation*} 
\begin{aligned}
&\phi_s^*(f\wtk\Tozpsq)(\tau) \\
&= p^{k-6} 
\sum_{\substack{i,j \bmod{p}, \\ k \bmod{p^2}}}
f((\sSp+v_i)\tau/p+u_{0,j,k}) \\
&\quad + p^{2k-6}\sum_{i,j\bmod{p}, i\ne0}
f\Bigl(
(s+\begin{psmallmatrix} 
j ( c j-2 b ) N
&
\frac{a \cc - 2 b \cc j N + c \cc j^2 N - c j p}{p} 
 \\
\frac{a \cc - 2 b \cc j N + c \cc j^2 N - c j p}{p}  &
\frac{a \cc^2 - 2 b \cc^2 j N + c \cc^2 j^2 N + 2 b \cc p - 2 c \cc j p}{p^2}
\end{psmallmatrix})
\tau+t_{0,i,0}\Bigr) \\
&\quad + p^{3k-6}\sum_{j\bmod{p}}
f(p(\ssp+w_j)\tau)
\\ &\quad 
+ p^{2k-6}\sum_{\substack{i,j \bmod{p}, \\ k\not\equiv 0 \bmod{p}}}
f\Bigl(
(s+\begin{psmallmatrix} 
j (-2 b + c j) N  &  
\frac{-a i + 2 b i j N - c i j^2 N - c j p}{p}   \\
\frac{-a i + 2 b i j N - c i j^2 N - c j p}{p} &  
\frac{a i^2 - 2 b i^2 j N + c i^2 j^2 N - 2 b i p + 2 c i j p}{p^2} 
\end{psmallmatrix})
\tau+t_{0,0,k}\Bigr)
\end{aligned}
\end{equation*}
\end{proposition}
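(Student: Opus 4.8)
The plan is to combine the right-coset decompositions of $\KN\diag(p,p,1,1)\KN$ and $\KN\diag(p,p^2,p,1)\KN$ from Proposition~\ref{prop:badprime-cosets} with the restriction-of-slash identity~\eqref{eqn:phisslash}. Recall that $\Tp=\KN\diag(1,1,p,p)\KN=\KN\diag(p,p,1,1)\KN$ and $\Tozpsq=\KN\diag(p,p^2,p,1)\KN$; since a Hecke operator acting on forms does not depend on the choice of coset representatives, we may use exactly the block upper triangular representatives $\blockmat$ produced by Proposition~\ref{prop:badprime-cosets}. Each such representative satisfies $AD'=\mu I_2$ with $\mu=p$ for $\Tp$ and $\mu=p^2$ for $\Tozpsq$, so that $D^{-1}=\mu^{-1}A'$ and $\det(AD)=\mu^2$ equals $p^2$ or $p^4$ respectively. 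For the few representatives in Proposition~\ref{prop:badprime-cosets} that arise by replacing a representative from~\cite{schmidt18}, one simply invokes the $\KN$-equivalence already established there, so the slash is unchanged.

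First I would, for each of the finitely many families of representatives, read off the blocks $A,B,D$ and substitute them into~\eqref{eqn:phisslash}, obtaining $\phi_s^*(f\wtk\blockmat)(\tau)=\det(A)^k\det(AD)^{-3/2}\,f(AsD^{-1}\tau+BD^{-1})$. The weight factor $\det(A)^k\det(AD)^{-3/2}$ is a power of~$p$ in every case: for $\Tp$ one has $\det(AD)=p^2$ and $\det A\in\{1,p,p^2\}$, giving prefactors $p^{-3}$, $p^{k-3}$, $p^{2k-3}$; for $\Tozpsq$ one has $\det(AD)=p^4$ and $\det A\in\{p,p^2,p^3\}$, giving prefactors $p^{k-6}$, $p^{2k-6}$, $p^{3k-6}$. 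These match the prefactors in the two displayed formulas.

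The substance of the argument is then to recognize, for each family, the symmetric matrix $AsD^{-1}=\mu^{-1}AsA'$ and the symmetric translation $BD^{-1}$ as the expressions written in the statement: the matrices $\sSp+v_i$, $\ssp+w_j$, $s/p$, $ps$, $p(\ssp+w_j)$, and the several unwieldy $2\times2$ rational matrices multiplying~$\tau$, together with the translations $t_{i,j,k}$ and $u_{i,j,k}$ defined in the statement (with the new denominators $p^2$ and $p^3$, not those of Propositions~\ref{restrictionbigsums} and~\ref{speedup1}). Each identification is a routine $2\times2$ matrix computation whose only nontrivial inputs are the symmetry of $\mu^{-1}AsA'$ and the relation $\aa p+\cc N/p=1$; the latter is precisely what collapses expressions such as $p-\cc N=\aa p^2$ and makes the off-diagonal and lower-right entries come out in the stated form. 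I would carry out one representative family for each formula in full detail — say the $\sum_{j\not\equiv0\bmod p}$ term of $\phi_s^*(f\wtk\Tp)$, whose representative already involves $\aa,\cc$ and so exhibits every feature of the reduction — and remark that the remaining families are handled the same way.

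The main obstacle is organizational rather than conceptual: there are many families, the $2\times2$ matrices occurring in the $\Tozpsq$ formula are genuinely cumbersome, and one must keep straight both the new meanings of $t_{i,j,k},u_{i,j,k},v_i,w_j$ and the systematic use of $\aa p+\cc N/p=1$. No estimate, limiting argument, or structural input beyond Proposition~\ref{prop:badprime-cosets} and the identity~\eqref{eqn:phisslash} is required.
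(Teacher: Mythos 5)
Your proposal is correct and is exactly the paper's route: the paper's entire proof is ``Apply~\eqref{eqn:phisslash} to Proposition~\ref{prop:badprime-cosets},'' and you carry out precisely that, with the right bookkeeping ($A'D=\mu I_2$, so $D^{-1}=\mu^{-1}A'$, $\det(AD)=\mu^{2}$, and the prefactors $p^{-3},p^{k-3},p^{2k-3}$ resp.\ $p^{k-6},p^{2k-6},p^{3k-6}$) and the identification of $\mu^{-1}AsA'$ and $BD^{-1}$ with the stated matrices via $\aa p+\cc N/p=1$. Nothing further is needed beyond the routine $2\times2$ verifications you describe.
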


\begin{proof}
Apply~\eqref{eqn:phisslash} to Proposition~\ref{prop:badprime-cosets}.
\end{proof}

We have the following available speed-ups.

\begin{proposition}\label{speedup-badprime}
Let $N,f,p,s,\ssp,\sSp$ be as at the beginning of this section.
Let $p\mmid N$.
For any integers $i,j,k$ let
$t_{i,j,k}=\begin{psmallmatrix}i/p&j/p\\j/p&k/p^2\end{psmallmatrix}$,
$u_{i,j,k}=\begin{psmallmatrix}i/p&j/p\\j/p&k/p^3\end{psmallmatrix}$,
$v_i=pt_{0,ia,i(ia+2b)}$,
and $w_j=j\begin{psmallmatrix}(-2b + jc)N/p&-c\\-c&0\end{psmallmatrix}$.
Then the following statements hold for all $e \in \Z_{\geq 0}$.
\begin{enumalph}
\item If $p \nmid a$, then
\begin{align*}
\Coeff_e& \sum_{i,j,k \bmod{p}}f(s\tau/p+t_{i,j,k})
=p\Coeff_e \sum_{j,k \bmod{p}}f(s\tau/p+t_{0,j,k}) \\
\intertext{and}
\Coeff_e&\sum_{i,j \bmod{p}}f((\ssp+w_j)\tau+t_{i,0,0})
=p\Coeff_e \sum_{j \bmod{p}}f((\ssp+w_j)\tau)
\end{align*}
\item If $p \nmid b$ then
$$
\Coeff_e \sum_{i,j,k \bmod{p}}f(s\tau/p+t_{i,j,k})
=p\Coeff_e \sum_{i,k \bmod{p}}f(s\tau/p+t_{i,0,k}).
$$
\item If $p \nmid c$ then
\begin{align*}
\Coeff_e&\sum_{i,j,k \bmod{p}}f(s\tau/p+t_{i,j,k})
=p\Coeff_e \sum_{i,j \bmod{p}}f(s\tau/p+t_{i,j,0}) \\
\intertext{and}
\Coeff_e&\sum_{i,k \bmod{p}}f((\sSp+v_i)\tau+t_{0,0,k})
=p\Coeff_e\sum_{i \bmod{p}}f((\sSp+v_i)\tau)
\intertext{and}
\Coeff_e&
\sum_{\substack{i,j \bmod{p}, \\ k \bmod{p^2}}}
f((\sSp+v_i)\tau/p+u_{0,j,k})
=p^2\Coeff_e\sum_{\substack{ i,j \bmod{p}}}f((\sSp+v_i)\tau/p+u_{0,j,0}).
\end{align*}
\item
For fixed $i$,
if $p\nmid ia+b$ then
\begin{align*}
\Coeff_e&\sum_{\substack{j \bmod{p}, \\ k \bmod{p^2}}}
f((\sSp+v_i)\tau/p+u_{0,j,k})
=p\Coeff_e\sum_{\substack{ k \bmod{p^2}}}
f((\sSp+v_i)\tau/p+u_{0,0,k}).
\end{align*}
\end{enumalph}
\end{proposition}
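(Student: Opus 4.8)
The plan is to argue exactly as in the proof of Proposition~\ref{speedup1}: substitute the Fourier expansion of~$f$ into each sum of restrictions, collapse the resulting additive character sums over $\Z/p\Z$ and $\Z/p^2\Z$ by orthogonality, and then check that, after projecting onto integral powers of~$q$, the stated coprimality hypothesis makes the divisibility condition one wishes to drop automatic. I would write out one case in full---say the second identity in part~(d), which is where the bad prime intervenes most visibly---and indicate that the displays of~(a), (b), (c) go through verbatim with only the ambient matrix and the resulting linear congruence changing.

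For~(d), fix~$i$ and set $M_i=\sSp+v_i$. Writing a general Fourier index as $t=\smallmat n{r/2}{r/2}{mN}$, a direct computation gives $\ip t{M_i}=pan+(b+ia)r+m\bigl(c+i(ia+2b)N\bigr)/p$, so that, with $q=\e(\tau)$, $f(M_i\tau/p+u_{0,j,k})=\sum_t\fc tf\,q^{\ip t{M_i}/p}\,\e\!\bigl(jr/p+kmN/p^3\bigr)$. Summing over $j\bmod p$ and $k\bmod p^2$ and using $\sum_{j\bmod p}\e(jr/p)=p$ or~$0$ according as $p\mid r$ or not, and $\sum_{k\bmod p^2}\e(kmN/p^3)=p^2$ or~$0$ according as $p^3\mid mN$ or not, collapses the double sum to $p^3\sum_{t:\,p\mid r,\ p^3\mid mN}\fc tf\,q^{\ip t{M_i}/p}$; since $p\mmid N$, the condition $p^3\mid mN$ says exactly $p^2\mid m$. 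The right-hand side of~(d) collapses the same way but without the $j$-sum, yielding $p\cdot p^2\sum_{t:\,p^2\mid m}\fc tf\,q^{\ip t{M_i}/p}$. Now extract the coefficient of~$q^e$ for $e\in\Znn$: a term survives only when $\ip t{M_i}/p=e$, i.e.\ $pan+(b+ia)r+m\bigl(c+i(ia+2b)N\bigr)/p=pe$; writing $m=p^2m''$ and reducing modulo~$p$ leaves $(b+ia)r\equiv0\bmod p$, so the hypothesis $p\nmid ia+b$ forces $p\mid r$. Hence the index set on the right already satisfies $p\mid r$, it coincides with the one on the left, and the $q^e$-coefficients agree.

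The other displays are proved by the identical three moves, with the bookkeeping shifted: for the $s\tau/p+t_{i,j,k}$ sums one uses $\sum_i\e(in/p)$, $\sum_j\e(jr/p)$ and $\sum_k\e(kmN/p^2)$ (the last over $k\bmod p$, contributing~$p$ when $p\mid m$) together with the relation $\ip ts=an+br+cm\equiv0\bmod p$; for the $(\ssp+w_j)\tau+t_{i,0,0}$ sum one uses $\sum_i\e(in/p)$ and reads off from the $(1,1)$-entry of $\ssp+w_j$ that an integral $q$-exponent forces $p\mid n$; and so on. In one or two cases---notably the third identity of~(c), where $p^2\mid m$ rather than $p\mid m$ must be recovered---the integrality relation is reduced first modulo~$p$ and then modulo~$p^2$, peeling off the divisibility in two steps. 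The main obstacle, and the only place the bad prime genuinely matters, is keeping this bookkeeping straight: one must track the exact power of~$p$ in the denominator of each translation matrix $t_{i,j,k}$, $u_{i,j,k}$, $v_i$, $w_j$, convert every condition $p^\nu\mid mN$ into $p^{\nu-1}\mid m$ via $p\mmid N$, and verify that the character sum over the diagonal variable contributes enough factors of~$p$ for the coprimality hypothesis to suffice after restriction to integral $q$-powers---the denominators recorded in Proposition~\ref{prop:badprime-restrict} (in particular the $p^3$ in $u_{i,j,k}$) being chosen precisely so that this goes through.
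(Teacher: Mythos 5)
Your proposal is correct and is essentially the paper's own argument: the paper simply says the proofs are similar to those of Proposition~\ref{speedup1}, whose method is exactly what you carry out --- expand $f$ in its Fourier series, collapse the translation sums by character orthogonality, and use the coprimality hypothesis together with integrality of the $q$-exponent (and $p\mmid N$) to recover the dropped divisibility condition, including the two-step reduction needed for the $p^2\mid m$ case in~(c). Your detailed verification of part~(d), with $\ip t{\sSp+v_i}=pan+(b+ia)r+m(c+i(ia+2b)N)/p$, matches the computation the paper carries out in the good-prime case and transfers correctly here.
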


\begin{proof}
The proofs are similar to those of Proposition~\ref{speedup1}
\end{proof}

Another speed-up is that for $X,Y\in \M_2^{\text{\rm sym}}$,
if the set $\{t\in\Xtwo:\,\Tr(Xt)=e\}$ is empty
then $\Coeff_e f(X\tau+Y) = 0$.
Here the set is independent of~$Y$ and the conclusion holds for all~$Y$,
and so checking whether the set is empty can save significant
computation time.
Further, this result can be crucial when the denominator of a
particular formula for~$f$ might restrict to zero for some $X$ and
$Y$, because we simply skip this~$X$.

\bibliographystyle{plain}
\bibliography{bibola}

\end{document}